\numberwithin{equation}{section}
\newtheorem{theorem}{Theorem}[section]
\newtheorem{corollary}[theorem]{Corollary}
\newtheorem{lemma}[theorem]{Lemma}
\newtheorem{proposition}[theorem]{Proposition}
\newtheorem{definition}{Definition}[section]
\newtheorem{remark}{Remark}[section]
\newtheorem{claim}[theorem]{Claim}
\newcommand{\R}{\mathbb{R}}
\newcommand\be{\begin{equation}}
\newcommand\ee{\end{equation}}
\newcommand\bea{\begin{eqnarray}}
\newcommand\eea{\end{eqnarray}}
\newcommand\beaa{\begin{eqnarray*}}
\newcommand\eeaa{\end{eqnarray*}}
\begin{document}

\title[spreading speeds]{Asymptotic spreading speeds for a predator-prey system with two predators and one prey}

\author[A. Ducrot]{Arnaud Ducrot}
\address{UNIHAVRE, LMAH, FR-CNRS-3335, ISCN, Normandie University, 76600 Le Havre, France}
\email{arnaud.ducrot@univ-lehavre.fr}

\author[T. Giletti]{Thomas Giletti}
\address{Institut Elie Cartan Lorraine, UMR 7502, University of Lorraine, 54506 Vandoeuvre-l\`es-Nancy, France}
\email{thomas.giletti@univ-lorraine.fr}

\author[J.-S. Guo]{Jong-Shenq Guo}
\address{Department of Mathematics, Tamkang University, Tamsui, New Taipei City 251301, Taiwan}
\email{jsguo\,@\,mail.tku.edu.tw}

\author[M. Shimojo]{Masahiko Shimojo}
\address{Department of Applied Mathematics,  Okayama University of Science, Okayama 700-0005, Japan}
\email{shimojo@xmath.ous.ac.jp}

\thanks{Date: \today. Corresponding author: J.-S. Guo}

\thanks{{\em 2010 Mathematics Subject Classification.} 35K45, 35K57, 35K55, 92D25.}

\thanks{{\em Key words and phrases:} predator-prey system, spreading speed, competition, mutation, nonlocal pulling.}

\begin{abstract}
This paper investigates the large time behaviour of a three species reaction-diffusion system, modelling the spatial invasion of two predators feeding on a single prey species.
In addition to the competition for food, the two predators exhibit competitive interactions and under some parameter conditions ($\mu>0$), they can also be considered as two mutants.
When mutations occur in the predator populations, the spatial spread of invasion takes place at a definite speed, identical for both mutants.
When the two predators are not coupled through mutation, the spreading behaviour exhibits a more complex propagating pattern, including multiple layers with different speeds.
In addition, some parameter conditions reveal situations where a nonlocal pulling phenomenon occurs and in particular where the spreading speed is not linearly determined.
\end{abstract}

\maketitle

\section{Introduction}
\setcounter{equation}{0}

In this work we study the large time behaviour for the following three-species predator-prey system
\bea
&&u_t=d_1 u_{xx}+uF (u,v,w) + \mu (v-u) \quad \mbox{in} \quad  (0,+\infty) \times \mathbb{R},\label{u-eq}\\
&&v_t=d_2 v_{xx}+vG (u,v,w) + \mu (u-v) \quad \mbox{in} \quad  (0,+\infty) \times \mathbb{R},\label{v-eq}\\
&&w_t=d_3 w_{xx}+wH (u,v,w) \quad \mbox{in} \quad  (0,+\infty) \times \mathbb{R},\label{w-eq}
\eea
wherein the functions $F$, $G$ and $H$ are respectively given by
\beaa
&&F (u,v,w) :=r_1(-1-u-kv+aw),\\
&&G (u,v,w) :=r_2(-1-hu-v+aw),\\
&&H (u,v,w) :=r_3(1-bu-bv-w).
\eeaa
Here $u=u(t,x)$ and $v=v(t,x)$ denote the densities of two predators and $w=w(t,x)$ corresponds to the density of the single prey.
The parameters $d_i$, $r_i$ in system \eqref{u-eq}-\eqref{w-eq} are all positive and respectively stand for the motility and per capita growth rate of each species.
The parameters $h>0$ and $k>0$ represent the competition between the two predators while $a>0$ and $b>0$ describe the predator-prey interactions.
The parameter $\mu$ is assumed to be nonnegative.

When $\mu>0$ it stands as a mutation rate between species $u$ and $v$, so that the two predators are mutant species.
Therefore, one should distinguish the case $\mu = 0$, where $u$ and $v$ can be understood as completely distinct species which are competing for the same prey,
from the case $\mu >0$ where $u$ and $v$ are the densities of two mutants of the same predator species, both feeding on the same prey. Both situations also lead to very different large time behaviours of the solutions.
We shall refer to the former as the `two competitors' case, and to the latter as the `two mutants' case.

To perform our study of system \eqref{u-eq}-\eqref{w-eq}, we impose the following parameter conditions
\be\label{c1}
a>1,\quad 0<h,k<1,\quad 0<b<\frac{1}{2(a-1)} , \quad  0 \leq \mu \leq  \frac{a-1}{2} \min \{ r_1, r_2 \}.
\ee
Condition~\eqref{c1} shall be assumed throughout this paper, and as we shall see below it typically allows the possibility of co-existence of all three species.

The main goal of this paper is to study the asymptotic spreading speed(s) of the two invading predators into a prey population uniformly close to its carrying capacity.
For this purpose, we typically investigate the Cauchy problem for \eqref{u-eq}-\eqref{w-eq} with the initial condition
\be\label{ic}
u(0,x)=u_0(x),\quad v(0,x)=v_0(x),\quad w(0,x)=w_0(x),\quad x\in\R,
\ee
where initial data $u_0, v_0, w_0$ are uniformly continuous functions defined on $\R$, both $u_0$ and $v_0$ have nontrivial compact supports, and
\be\label{c2}
0\le u_0,\; v_0\le a-1,\quad \beta\le w_0\le 1\;\mbox{ on $\R$.}
\ee
Hereafter, we set $\beta:=1-2 b(a-1) >0$ in which the positivity follows from the third condition in~\eqref{c1}.
Note that by the classical theory of reaction-diffusion systems there is a unique global (in time) solution $(u,v,w)$
of the Cauchy problem for system \eqref{u-eq}-\eqref{w-eq} with initial condition \eqref{ic} for any uniformly continuous initial data $(u_0,v_0,w_0)$ satisfying~\eqref{c2}.

The description of spatial propagation for diffusive predator-prey systems has a long history. In particular, traveling wave solutions have been exhibited in a wide range of predator-prey systems.
For the systems with exactly one prey and one predator, we refer in particular to the pioneering works~\cite{Dunbar,Gardner}. We also refer to \cite{Huang-Lu-Ruan,Hsu-etal,Mischaikow}
and the references cited therein for more results about traveling waves in such a context.
Recently, more attention is paid to the existence of traveling waves of some three species predator-prey systems.
For this, we refer the reader to, e.g., \cite{BP18,Huang-Lin} for a diffusive system involving two preys and one predator,
and \cite{GNOW20,LWZY15} for a system with two predators and one prey.

While some stability results were obtained in~\cite{GJ91}, it is only recently that a more exhaustive study of the spreading properties of solutions of the Cauchy problem has been performed
for systems with one prey and one predator~\cite{D,DGM,DGLP,Pan}.
This naturally raises the question of propagation phenomena in more realistic systems involving a larger number of interacting species.
Let us mention in particular that the spreading speed of the predator for a three-species predator-prey system with a single predator and two preys was studied by Wu~\cite{Wu}.

As far as mutant systems are considered, we refer to Griette and Raoul~\cite{GR16} for a study of traveling wave solutions.
We also refer to Girardin~\cite{Girardin18} for a result on the asymptotic spreading speed of solutions of the Cauchy problem and to Morris et al~\cite{Morris} for discussion about the linear determinacy of the spreading speed.
However, those earlier works were concerned with systems coupling mutations and competitive interactions between the mutants.
As far as we know, our work is the first to investigate the spreading properties in a situation involving both mutations and prey-predator interactions.

One important difficulty to be overcome in our analysis of the spreading properties for system \eqref{u-eq}-\eqref{w-eq} with \eqref{ic} is the lack of comparison principle.
In addition, as described below, the `two-competitors' system admits different semi-trivial equilibria that generate complex propagating behaviour,
including the development of multiple propagating layers with different speeds.
These difficulties are overcome by using refined dynamical system arguments and by carefully investigating suitable omega-limit solutions in various and adapted moving frames.

The rest of this paper is organized as follows. First, Section~2 is devoted to the description of the main results obtained in this paper.
This comes after introducing some notation to be used throughout this paper.
Then we start in Section~\ref{sec:prelim} with some preliminaries.
These preliminaries include a priori estimates on solutions and the computation of the principal eigenvalues of some linearized systems which shall arise in the proof of our main results.
Moreover, we establish some persistence result, Lemma~\ref{LE-hyp}, which shall be a key ingredient in our arguments, in the spirit of~\cite{DGM}.
Then, in Section~\ref{sec:lyapunov}, we show by a Lyapunov like argument that, in the `two competitors case', bounded entire in time solutions which are bounded from below by some positive constants
must be identical to a steady state.

The later sections are directly concerned with the proofs of our main results {described in Section 2.}
Section~\ref{sec:mutants} mostly deals with 
the `two mutants' case and some general upper bounds on the spreading speeds, both for the `two mutants' and `two competitors' cases.
Next, in Section~\ref{sec:competitors1} we establish some of the lower bounds on the speeds in the `two competitors' case. 
Then in Section~\ref{sec:competitors2} we {deal with} 
the case when both predator species have the same speed and diffusivity.
Finally, 
in Section~\ref{competitors_faster}, {we first derive the definite spreading speed of the faster predator in the `two competitors' case.
Then we exhibit a `nonlocal pulling' phenomenon for the slower predator. This leaves a question on the definite spreading speed of the slower predator to be open.}


\section{Main results}

In this section, we state the main results that shall be proved and discussed in this paper.
We start by introducing some notation and important quantities before going to the description of our main spreading results.

\subsection{Some notation}

\subsubsection{Constant equilibria}

Let us first look at the underlying ODE system. It is clear that $(0,0,0)$ and $(0,0,1)$ ({\it the predator-free state}) are two trivial equilibria for system \eqref{u-eq}-\eqref{w-eq}; both are linearly unstable.
For other steady states, we consider separately the `two competitors' and the `two mutants' cases.
In both cases, we shall find that the three components ultimately persist in the large time behaviour of solutions of~\eqref{u-eq}-\eqref{w-eq}.
Furthermore, in the first case, the solution converges to the unique co-existence steady state which we compute below.

\noindent{\bf Two competitors case $\mu = 0$:} It is easy to check that, when $a>1$, $(0,\tilde{p},\tilde{q})$ and $(\tilde{p},0,\tilde{q})$ are two semi-trivial steady states of system \eqref{u-eq}-\eqref{w-eq}, where
\be\label{def-pq}
\tilde p=\frac{a-1}{1+ab},\quad \tilde{q}=\frac{b+1}{1+ab}.
\ee
Moreover, under condition \eqref{c1} there is a unique positive co-existence state $(u^*,v^*,w^*)$, where
\begin{equation}\label{def-equi}
\begin{split}
&u^*:=\frac{1-k}{1-hk}(aw^*-1),\quad v^*:=\frac{1-h}{1-hk}(aw^*-1),\\
&w^*:=\frac{(1-hk)+b(2-h-k)}{(1-hk)+ab(2-h-k)}.
\end{split}
\end{equation}
Note that one has
\beaa
F(0,\tilde p,\tilde q)=\frac{r_1}{1+ab}(a-1)(1-k)>0,\quad G(\tilde p,0,\tilde q)=\frac{r_2}{1+ab}(a-1)(1-h)>0.
\eeaa
This means that the semi-trivial steady states are unstable with respect to the underlying kinetic ODE system.

\noindent{\bf Two mutants case $\mu >0$:} In this case, because of the coupling between the two mutants it is clear that there cannot exist a semi-trivial steady state of system \eqref{u-eq}-\eqref{w-eq}.
However, as in the case $\mu = 0$, there typically exists a unique positive co-existence steady state $(u^*_\mu , v^*_\mu, w^*_\mu)$. This can be checked proceeding as in \cite{GR16} provided that $\mu$ is small enough.
Note that, as discussed in \cite{Coville-Fabre}, the mutation rate $\mu$ is typically small for relevant biological situations. Since a more detailed analysis of the equilibria is not necessary to our purpose, we omit the details.


\subsubsection{Linear speeds}

We shall take an interest in the invasion of predators on the predator-free state. Consider the linearization of \eqref{u-eq} and \eqref{v-eq} around the predator-free state $(0,0,1)$, and write it in matrix form as
$$\left( \begin{array}{c} u_t \\ v_t\end{array} \right) = \left( \begin{array}{c} d_1 u_{xx} \\ d_2 v_{xx}\end{array} \right) + M [\mu] \times \left( \begin{array}{c} u \\ v\end{array} \right),$$
where
$$ M [\mu]  := \left[ \begin{array}{cc}
r_1 (a-1) - \mu & \mu \\
\mu & r_2 (a-1) - \mu
\end{array}
\right].$$
In the case when $\mu >0$, the components $u$ and $v$ are mutants of the same species and therefore they should spread simultaneously. By analogy with the scalar equation, one may want to look for an ansatz of the type
$$ (u,v)= (p, q) e^{-\gamma (x-ct)}, \quad p, q , \gamma  >0,\; c\in\R.$$
Putting this in the above equation, one finds that $(p, q)$ and $\gamma$ must solve
$$ M [\mu , \gamma] \times \left( \begin{array}{c} p \\ q \end{array} \right) = c \gamma \left( \begin{array}{c} p \\ q\end{array} \right),$$
where
\begin{equation}\label{matrix_mutant}
 M [\mu ,\gamma ]  := \left[ \begin{array}{cc}
d_1 \gamma^2 + r_1 (a-1) - \mu & \mu \\
\mu & d_2 \gamma^2 + r_2 (a-1) - \mu
\end{array}
\right].
\end{equation}
This leads us to introduce
\begin{equation}\label{cmu*}
 c_\mu^* := \min_{\gamma >0} \frac{ \Lambda [\mu,\gamma] }{\gamma} >0 ,
 \end{equation}
such that an ansatz of the above type solves the linearized system around the predator-free state if and only if $c \geq c_\mu^*$. Here $\Lambda [\mu ,\gamma]$ denotes the unique principal eigenvalue of $M[\mu, \gamma]$,
which exists by applying Perron-Frobenius theorem. We point out that the existence and positivity of $c_\mu^*$ are ensured by the fact that $\Lambda [\mu,0] >0$ (recall that $2 \mu \leq  (a-1) \min \{ r_1, r_2 \}$)
and $\gamma \mapsto \Lambda [\mu,\gamma]$ is convex.

In the case when $\mu = 0$, the functions~$u$ and~$v$ are population densities of distinct species which may spread with different speeds. This can be seen in the fact that the matrix $M[0]$ is no longer irreducible;
one can then find semi-trivial ansatzes
$$ (u,v) = (p, 0) e^{-\gamma (x-ct)}, \quad  (u, v) = (0 ,q) e^{-\gamma (x-ct)}, \quad p, q >0.$$
These exist if, respectively, $c \geq c^*_u$ and $c \geq c^*_v$. Those speeds are explicitly defined as
\begin{equation}\label{c*}
c_u^*:=2\sqrt{d_1 r_1(a-1)},\quad c_v^*:=2\sqrt{d_2 r_2(a-1)}.
\end{equation}
One may check that $c^*_\mu$ converges to $\max \{ c_u^* , c_v^* \}$ as $\mu \to 0$. Without loss of generality, we may assume that $d_1r_1\ge d_2r_2$. Hence throughout this paper we always assume that
$$c_u^*\ge c_v^* .$$
In the case when $\mu = 0$, it is then expected that the component~$u$ shall spread with speed $c_u^*$, and the component~$v$ at some slower speed. However, this means that $v$ may no longer invade in the predator-free state.
This leads us to introduce
\begin{equation}\label{c**}
\begin{split}
&c_u^{**}:=2\sqrt{\frac{d_1r_1}{1+ab}(a-1)(1-k)}=c_u^*\sqrt{\frac{1-k}{1+ab}},\\
&c_v^{**}:=2\sqrt{\frac{1-h}{1+ab}d_2r_2(a-1)}=c_v^*\sqrt{\frac{1-h}{1+ab}}.
\end{split}
\end{equation}
Note that $c_v^{**}<c_v^*$ and $c_v^{**}$ is the spreading speed of solutions of
\beaa
v_t = d_2 v_{xx} + r_2 v (-1 - h \tilde{p} + a \tilde{q}),
\eeaa
i.e., the linear invasion speed into the semi-trivial steady state (when $\mu = 0$) where $u$ and $w$ coexist.

\subsection{Our main results}

We are now in a position to describe the spreading speed of solutions.
In the sequel, we let $(u,v,w)$ be a solution of the Cauchy problem for system \eqref{u-eq}-\eqref{w-eq} with the initial condition \eqref{ic}, under the assumption \eqref{c2}.

Our first theorem deals with the `two mutants' case $ \mu >0$ where we can describe accurately the spreading speed of the solution:

\begin{theorem}\label{th:mutants}
Assume that~\eqref{c1} holds and that in addition $\mu >0$. Let $(u_0 , v_0)$ be nontrivial and compactly supported such that $ 0 \leq u_0 , v_0 \leq a-1 $, and $\beta \leq w_0 \leq 1$.
Recall also that $c^*_\mu$ is defined by~\eqref{cmu*}.
Then the solution $(u,v,w)$ enjoys the following spreading behaviour:
\begin{itemize}
\item [(i)] for any $c > c^*_\mu$,
$$\lim_{t \to +\infty} \sup_{|x| \geq ct} \{|u (t,x)| + |v (t,x)| +  | 1 - w(t,x)|\}= 0;$$
\item [(ii)] for any $c\in[0,c^*_\mu)$,
$$\liminf_{t \to +\infty} \inf_{|x| \leq ct}  \min \{ u (t,x) ,  v (t,x) , 1 - w(t,x) \} >0.
$$
\end{itemize}
\end{theorem}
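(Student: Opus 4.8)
The plan is to treat the two assertions separately, relying on the persistence result (Lemma~\ref{LE-hyp}), the eigenvalue computations for $M[\mu,\gamma]$, and the Lyapunov-type rigidity for bounded entire solutions established in Section~\ref{sec:lyapunov}. For the upper bound (i), I would build a supersolution for the $(u,v)$ subsystem. Since $w\le 1$ by the a priori estimates, the pair $(u,v)$ satisfies a differential inequality dominated by the linear cooperative system with matrix $M[\mu]$ acting on the diffusion operator, i.e. the system governed by $M[\mu,\gamma]$ in Fourier-exponential variables. Taking $(p,q)$ to be the Perron-Frobenius eigenvector of $M[\mu,\gamma^*]$ for the optimal $\gamma^*$ achieving the minimum in \eqref{cmu*}, the function $A(p,q)e^{-\gamma^*(|x|-ct)}$ (suitably symmetrized, or rather $A(p,q)\min\{1,e^{-\gamma^*(x-ct)}\}$ on each side) is a supersolution of the linear system for any $c\ge c_\mu^*$; since the initial data are compactly supported one can choose $A$ large enough to dominate $(u_0,v_0)$. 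Comparison for the (cooperative, hence order-preserving) linear system then forces $u(t,x)+v(t,x)\to 0$ uniformly on $|x|\ge ct$ for $c>c_\mu^*$. Once $u+v\to 0$ there, the $w$-equation is asymptotically $w_t=d_3w_{xx}+r_3w(1-w)+o(1)$, and a standard comparison/sweeping argument on the scalar Fisher-KPP equation gives $1-w\to 0$ uniformly on the same region; alternatively one invokes the general machinery that $w$ is trapped between the solution with $u=v=0$ (which converges to $1$) and $1$.

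For the lower bound (ii), I would argue by contradiction using omega-limit solutions in the moving frame. Fix $c\in[0,c_\mu^*)$ and suppose the $\liminf$ fails; then there are sequences $t_n\to\infty$ and $x_n$ with $|x_n|\le ct_n$ along which one of $u$, $v$, $1-w$ tends to $0$. Translating, $(u_n,v_n,w_n)(t,x):=(u,v,w)(t+t_n,x+x_n)$ converges (locally uniformly, by parabolic estimates) to an entire-in-time solution $(u_\infty,v_\infty,w_\infty)$ of \eqref{u-eq}-\eqref{w-eq}, bounded, with $0\le u_\infty,v_\infty$, $w_\infty\le 1$, and touching zero in one component at $(0,0)$. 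The key is to show this entire solution is in fact bounded below by positive constants, so that the Lyapunov argument of Section~\ref{sec:lyapunov} forces it to be the coexistence steady state $(u^*_\mu,v^*_\mu,w^*_\mu)$, contradicting the touching. Establishing the strict positivity is exactly where Lemma~\ref{LE-hyp} enters: I expect it to provide, from the instability of the predator-free state quantified by $\Lambda[\mu,0]>0$ and of the trivial state, a uniform persistence estimate of the form $\liminf_{t\to\infty}\inf_{|x|\le ct}(\cdot)>0$ for speeds below $c_\mu^*$, using a compactly supported subsolution built from the eigenvector of $M[\mu,\gamma]$ for small $\gamma$ (the "linear invasion" lower bound). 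One must also rule out that $w_\infty$ degenerates: since $u_\infty,v_\infty$ stay bounded, the $w$-equation keeps $w_\infty$ bounded below by a positive constant via the strong maximum principle once it is not identically zero, and $w_\infty\equiv 0$ is incompatible with the $u$- or $v$-equation persisting.

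The main obstacle, as the authors flag in the introduction, is the absence of a comparison principle for the full three-species system: the cooperative structure is only available for the $(u,v)$-pair when $w$ is frozen, and the coupling through $w$ (with the "$+aw$" terms) is not monotone. So the heavy lifting in step (ii) is not a direct sub/supersolution sandwich but the combination of (a) the persistence lemma to get a crude positive lower bound propagating at speed just below $c_\mu^*$, and (b) the rigidity theorem to upgrade the omega-limit solution to the coexistence equilibrium. A secondary technical point is the symmetrization in $x$: because the initial data are compactly supported but not necessarily even, one runs the supersolution argument on the right ($x\ge 0$) and left ($x\le 0$) separately, which is routine. I would also need the convexity and positivity properties of $\gamma\mapsto\Lambda[\mu,\gamma]$ already recorded after \eqref{cmu*} to guarantee $\gamma^*$ exists and that sub/supersolutions at every admissible speed can be constructed.
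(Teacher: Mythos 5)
Your proof of part (i) is essentially the paper's: dominate $(u,v)$ by the linear cooperative system via the Perron--Frobenius eigenvector of $M[\mu,\gamma^*]$ and compare on each half-line, then deal with $w$ afterwards. (The paper closes the $w$-estimate by extracting an entire-in-time limit solution and applying the strong maximum principle rather than by a sweeping argument, but the content is the same.)

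Your plan for part (ii), however, contains a genuine gap. You propose to extract an omega-limit entire solution $(u_\infty,v_\infty,w_\infty)$, show it is uniformly positive, and then invoke the Lyapunov rigidity of Section~\ref{sec:lyapunov} to identify it with the coexistence steady state $(u^*_\mu,v^*_\mu,w^*_\mu)$. But that rigidity machinery (Lemmas~\ref{LE_entire}, \ref{LE_entire_bis}, \ref{LE-entire-full}) is established \emph{only under} $\mu=0$ --- the section opens by explicitly assuming this --- and no analogue is proved for the mutant system $\mu>0$. This is precisely why the authors state, right after the theorem, that convergence to a steady state in the wake of propagation remains open for $\mu>0$. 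So the last step of your contradiction argument is unavailable. There is also a structural redundancy: you remark that the persistence lemma should give $\liminf_{t\to\infty}\inf_{|x|\le ct}(\cdot)>0$ directly, but that is exactly the conclusion (ii), so if you had it in hand the omega-limit classification would be superfluous.

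The paper's actual route to (ii) avoids any classification of entire solutions. The hypothesis of the persistence result (Lemma~\ref{LE-weak-strong}, strengthened by Corollary~\ref{COR-weak-strong} for $\mu>0$) is verified by Lemma~\ref{weak_sum}: for each $c<c_\mu^*$ there is a \emph{uniform} $\varepsilon(c)>0$ such that every solution with nontrivial $(u_0,v_0)$ has $\limsup_{t\to\infty}\bigl(u(t,ct)+v(t,ct)\bigr)\ge\varepsilon(c)$. This is proved by contradiction: a sequence of solutions violating the bound would satisfy $u_n,v_n\to 0$ and $w_n\to 1$ locally uniformly near the ray of speed $c$, so that $(u_n,v_n)$ becomes a supersolution of a linear cooperative system on a large interval $(-R,R)$ around $ct$; the negativity of the Dirichlet principal eigenvalue $\Lambda_R(c,\delta)$ provided by Proposition~\ref{prop:linear_instab_mutant} then forces $u_n(t,ct)$ to grow unboundedly, contradicting Proposition~\ref{prop:prelim_estim}. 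Once Lemma~\ref{weak_sum} holds, Corollary~\ref{COR-weak-strong} upgrades the $\limsup$ along a ray to a $\liminf$ over $|x|\le ct$ for each of $u$, $v$, and $1-w$, with no rigidity theorem invoked. If you want a working proof, replace your Lyapunov step with this eigenvalue/blow-up argument.
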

The above result shows that both predator mutants spread with the same asymptotic speed. However, the question of the convergence to a steady state in the wake of the propagation remains open.

Let us now turn to the `two competitors' case $\mu = 0$. This situation turns out to be more complicated,
because unlike in the `two mutants' case the subsystem~\eqref{u-eq} and~\eqref{v-eq} with constant $w$ has no cooperative structure in the neighborhood of $(0,0)$.
Moreover, both competitors may spread with different spreading speeds, so that ultimately one may observe three zones: ahead of the propagation in the predator-free state,
after the propagation of the first predator but before arrival of the second, and finally a co-existence state where all three species persist.

The next theorem provides some estimates on the spreading speeds of both species.

\begin{theorem}\label{theo:spread_plus}
Assume that~\eqref{c1} holds and that in addition $\mu = 0$. Let both $u_0$ and~$v_0$ be nontrivial and compactly supported such that $0 \leq u_0, v_0 \leq a-1$, and $ \beta \leq w_0 \leq 1$.
Recall also that $c_u^*$ and $c_v^*$ are defined in \eqref{c*}.
Then the solution $(u,v,w)$ satisfies, for any $c > c_u^*$,
\beaa
\lim_{ t \to +\infty} \sup_{|x| \geq ct} u(t,x) =0,
\eeaa
and, for any $c>c_v^*$,
\beaa
\lim_{ t \to +\infty} \sup_{|x| \geq ct} v(t,x) =0.
\eeaa
Moreover, for any $c > \max \{ c_u^*, c_v^*\}$,
\beaa
\lim_{t \to +\infty} \sup_{|x| \geq ct } | 1 - w(t,x)| = 0.
\eeaa
\end{theorem}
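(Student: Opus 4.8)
The plan is to establish the three spreading statements as \emph{upper bounds only}, which is the easier half of a full spreading result and does not require the comparison principle for the full system. First I would reduce everything to scalar considerations. Observe that $u$ satisfies $u_t = d_1 u_{xx} + u F(u,v,w) = d_1 u_{xx} + r_1 u(-1-u-kv+aw)$. Since all components are nonnegative (which follows from the invariant-region / a priori estimates of Section~\ref{sec:prelim}, to be quoted) and $w$ is bounded above — indeed $w \le 1$ is preserved, again by the a priori estimates, because $H(u,v,w) \le r_3(1-w)$ — we get the scalar differential inequality
\[
u_t \le d_1 u_{xx} + r_1 u(a-1-u) \le d_1 u_{xx} + r_1 (a-1)\, u .
\]
Thus $u$ is a subsolution of the linear equation $\bar u_t = d_1 \bar u_{xx} + r_1(a-1)\bar u$, whose fundamental solution spreads exactly at speed $c_u^* = 2\sqrt{d_1 r_1 (a-1)}$. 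Taking $\bar u$ to be the solution of this linear equation with the same (compactly supported, bounded) initial datum $u_0$, the scalar parabolic comparison principle gives $0 \le u(t,x) \le \bar u(t,x)$, and the classical Gaussian-type estimate for $\bar u$ yields $\sup_{|x|\ge ct}\bar u(t,x)\to 0$ for every $c>c_u^*$. This proves the first limit. The statement for $v$ is identical, using $v_t \le d_2 v_{xx} + r_2(a-1)v$ and the speed $c_v^*$.

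For the third statement, concerning $w$, I would work with the deviation $z := 1-w \ge 0$ (nonnegativity again from $w\le 1$). Subtracting the $w$-equation from the constant $1$ gives
\[
z_t = d_3 z_{xx} - w H(u,v,w) = d_3 z_{xx} + r_3 (1-z)\big(b u + b v - z\big),
\]
so that
\[
z_t \le d_3 z_{xx} + r_3 (1-z)\,b(u+v) \le d_3 z_{xx} + r_3 b\,(u+v).
\]
Here the coupling to $u$ and $v$ enters only through the inhomogeneous source term $r_3 b(u+v)$, and we already control $u+v$: for any $c>\max\{c_u^*,c_v^*\}$ we have just shown $\sup_{|x|\ge ct}(u+v)(t,x)\to 0$. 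I would then fix such a $c$, pick $c' \in (\max\{c_u^*,c_v^*\}, c)$, and split space into the region $|x|\ge c't$ where the source is uniformly small for large $t$, and the complement. A standard argument — either a direct construction of a supersolution of the form $\varepsilon + (\text{Gaussian tail from the region }|x|\le c't)$, or an appeal to a general "a small source plus a far-away bounded datum cannot produce a large solution faster than any speed $>c'$" lemma — shows $\sup_{|x|\ge ct} z(t,x)\to 0$. Concretely: let $\eta>0$; choose $T$ so that $u+v \le \eta$ on $\{|x|\ge c't\}$ for $t\ge T$; then on that region $z$ is a subsolution of $\partial_t \phi = d_3\phi_{xx} + r_3 b\eta$, while the contribution of the "bad" set $\{|x|< c't\}$, where $z\le 1$, diffuses outward at speed $0$ and hence is $o(1)$ uniformly on $\{|x|\ge ct\}$ since $c>c'$; letting $t\to\infty$ and then $\eta\to 0$ closes the estimate.

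The genuinely delicate point is the last step for $w$: the inequality $z_t \le d_3 z_{xx} + r_3 b(u+v)$ alone does \emph{not} immediately give a decay estimate, because the source term, though small outside $\{|x|\le c't\}$, is not integrable in time and the bounded datum $z\le 1$ sitting on the expanding set $\{|x|\le c't\}$ must be shown not to leak past speed $c$. Making this rigorous requires either (a) the explicit heat-kernel representation $z(t,x) \le (\text{contribution of initial data, irrelevant here since } z_0 \le 2b(a-1) \text{ is globally bounded, actually we should also handle that it is } O(1) \text{ everywhere}) + \int_0^t \int_{\R} \Gamma_{d_3}(t-s,x-y)\, r_3 b (u+v)(s,y)\,dy\,ds$ followed by a careful split of the $y$-integral into $|y|\lessgtr c's$ and of the $s$-integral into $s\lessgtr \delta t$, using the Gaussian decay to kill the region where $|x-y|$ is of order $t$; or (b) invoking a ready-made comparison lemma of the type already used elsewhere in the predator–prey spreading literature (e.g. in the spirit of \cite{DGM}) stating that a subsolution of a KPP-type equation with a source vanishing uniformly outside a cone of speed $c'$ itself vanishes uniformly outside any cone of speed $c>c'$. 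I would present option (b) if such a lemma is available from the preliminaries, and fall back on the self-contained heat-kernel computation (a) otherwise. Everything else — nonnegativity, the bound $w\le 1$, the Gaussian tail estimates for the linear equations governing $u$ and $v$ — is routine and quotable from Section~\ref{sec:prelim}.
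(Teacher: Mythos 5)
Your treatment of $u$ and $v$ is essentially the paper's: pass to a scalar differential inequality using $0\le u,v$ and $w\le 1$, then compare with a scalar KPP/linear equation whose invasion speed is $c_u^*$ (resp.\ $c_v^*$). The paper compares with $u_t=d_1u_{xx}+r_1u(a-1-u)$ and cites Aronson--Weinberger; you linearize one step further and use the Gaussian tail of the linear semigroup. These are the same argument up to a cosmetic choice, and both are correct.

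Your treatment of $w$ is a genuinely different route from the paper's, and as written it has a gap. You discard the term $-r_3(1-z)z$ when you pass from
$z_t = d_3 z_{xx} - r_3(1-z)z + r_3 b(1-z)(u+v)$
to the weaker inequality $z_t\le d_3 z_{xx}+r_3 b(u+v)$. That discarded term is exactly the damping that forces $z\to 0$: since $w\ge\beta>0$, one has $-r_3(1-z)z\le -r_3\beta z$. Without it, the linear problem you compare against, $\phi_t=d_3\phi_{xx}+r_3 b\eta$, has \emph{no} decaying supersolution (it grows linearly in $t$ from the constant source and merely preserves the $O(1)$ datum $z\le 1-\beta$ sitting on the expanding set $|x|\le c't$). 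Hence the step ``letting $t\to\infty$ and then $\eta\to 0$ closes the estimate'' does not go through from the inequality you wrote. Keeping the damping, i.e.\ working with $z_t\le d_3 z_{xx}-r_3\beta z+r_3 b(u+v)$, does make the Duhamel/heat-kernel route salvageable because $e^{-r_3\beta(t-s)}$ then kills both the bounded-time contribution of the source and the homogeneous propagation of the globally positive datum $z_0$; but it still requires a careful two-zone split and moving-boundary bookkeeping.

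For comparison, the paper sidesteps all of this with a compactness/Liouville argument: assume $w(t_n,x_n)\not\to 1$ with $|x_n|\ge ct_n$, extract a locally uniform limit $(u_\infty,v_\infty,w_\infty)$; the already-proved $u,v$ bounds force $u_\infty\equiv v_\infty\equiv 0$, so $w_\infty$ solves the Fisher--KPP equation $\partial_t w_\infty=d_3\partial_{xx}w_\infty+r_3 w_\infty(1-w_\infty)$; and the lower barrier $w_\infty\ge\beta>0$ forces $w_\infty\equiv 1$, a contradiction. This uses $w\ge\beta$ in the same essential way your discarded damping term would, but packages it through the strong maximum principle rather than a quantitative kernel estimate, which is why it closes in a few lines. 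If you want to keep your Duhamel approach you must restore the damping; otherwise the compactness route is the cleaner choice and is the one the paper takes.
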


In particular, this result states that $u$ and $v$ spread at most, respectively, with speeds~$c_u^*$ and~$c_v^*$. Now the question is whether $c_u^*$ and $c_v^*$ are indeed the spreading speed of $u$ and $v$.
To answer this point, we consider separately the cases when $c_v^* = c_u^*$ and $c_v^* < c_u^*$.
\begin{theorem}\label{THEO-inside1}
Assume that~\eqref{c1} holds and that in addition $\mu = 0$ and $c_v^* = c_u^*$. Let both $u_0$ and $v_0$ be nontrivial and compactly supported such that $0 \leq u_0, v_0 \leq a-1$, and $ \beta \leq w_0 \leq 1$.
Recall also that $c_u^{**}$ and $c_v^{**}$ are defined in~\eqref{c**}.
Then the solution $(u,v,w)$ enjoys the following spreading behaviour:
\begin{itemize}
\item [(i)] for each $c\in (0,c_v^*)$ one has
\begin{equation*}
\liminf_{t\to +\infty}\inf_{|x|\leq ct}\left(u+v\right)(t,x)>0;
\end{equation*}
\item [(ii)] the functions $u$ and $v$ separately satisfy
\begin{equation*}
\begin{split}
&\liminf_{t\to +\infty}\inf_{|x|\leq ct}u(t,x)>0,\;\;\forall c\in \left(0, c_u^{**}\right),\\
&\liminf_{t\to+\infty}\inf_{|x|\leq ct}v(t,x)>0,\;\;\forall c\in (0,c_v^{**});
\end{split}
\end{equation*}
\item [(iii)] finally, for each $0<c<\min (c_v^{**},c_u^{**})$,
\begin{equation*}
\lim_{t\to+\infty} \sup_{|x|\leq ct}\left\|(u,v,w)(t,x)-(u^*,v^*,w^*)\right\|=0,
\end{equation*}
wherein $(u^*,v^*,w^*)$ is the constant equilibrium defined in~\eqref{def-equi} and $\|\cdot\|$ denotes any norm in $\R^3$.
\end{itemize}
\end{theorem}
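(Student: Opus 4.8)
\noindent\textbf{Proof strategy for Theorem~\ref{THEO-inside1}.}
The plan is to prove the three statements in order, each time arguing by contradiction through moving-frame $\omega$-limit solutions. Throughout I will use the a priori bounds $0\le u,v\le a-1$ and $\beta\le w\le1$ from Section~\ref{sec:prelim}, together with interior parabolic estimates, so that for any $t_n\to+\infty$ and any $(x_n)$ the shifted maps $(t,x)\mapsto(u,v,w)(t+t_n,x+x_n)$ converge, along a subsequence and locally uniformly with their derivatives, to an entire-in-time solution $(U,V,W)$ of \eqref{u-eq}--\eqref{w-eq} with $\mu=0$; I call such a limit a \emph{moving-frame limit at speed $c'$} when $x_n/t_n\to c'$. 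I also record the elementary fact that any entire solution $W$ with $\beta\le W\le1$ of $W_t=d_3W_{xx}+r_3W(1-W)$ is identically $1$: comparing $W$ from below with the space-homogeneous solution issued at a time $t_0$ (which tends to $1$) and letting $t_0\to-\infty$ gives $W\ge1$.

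\emph{Proof of (i).} Fix $c\in(0,c_u^*)$ (recall $c_v^*=c_u^*$) and suppose $(u+v)(t_n,x_n)\to0$ for some $t_n\to+\infty$ and $|x_n|\le ct_n$, with $x_n/t_n\to c'$, so $|c'|\le c<c_u^*$. Since $u,v\ge0$, a moving-frame limit $(U,V,W)$ has $U(0,0)=V(0,0)=0$; because $\mu=0$ the equations for $U$ and for $V$ are scalar linear parabolic equations with bounded coefficients, so the strong maximum principle forces $U\equiv V\equiv0$, whence $W\equiv1$ by the fact just recalled. Thus $(u,v,w)(\cdot+t_n,\cdot+x_n)\to(0,0,1)$ locally uniformly, which is precisely the configuration excluded by the persistence Lemma~\ref{LE-hyp}: at the linear level around the predator-free state the two predator equations decouple into Fisher--KPP operators of minimal speeds $c_u^*$ and $c_v^*=c_u^*$, so the predators invade $(0,0,1)$ in every frame of speed $<c_u^*$; since $u_0,v_0$ are nontrivial the predators get established at some fixed time, and Lemma~\ref{LE-hyp} propagates a uniform positive lower bound on $u+v$ at every such speed, contradicting $|c'|<c_u^*$ (and in fact proving the asserted estimate directly).

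\emph{Proof of (ii) and (iii).} For (ii) I treat $u$; the case of $v$ is symmetric. Fix $c\in(0,c_u^{**})$ and suppose $u(t_n,x_n)\to0$ along $t_n\to+\infty$, $|x_n|\le ct_n$, $x_n/t_n\to c'$, so $|c'|\le c<c_u^{**}<c_u^*=c_v^*$. A moving-frame limit $(U,V,W)$ has $U(0,0)=0$, hence $U\equiv0$ by the strong maximum principle, and then $(V,W)$ is an entire solution of the two-species predator-prey subsystem obtained by setting $u\equiv0$. By part~(i) applied at the subcritical speed $|c'|<c_u^*=c_v^*$, the quantity $u+v$ — hence $V$ in the limit — is bounded below by a positive constant, and $W\ge\beta$; the rigidity argument of Section~\ref{sec:lyapunov} applied to this subsystem (see also~\cite{DGM}) then forces $(V,W)\equiv(\tilde p,\tilde q)$, so $(u,v,w)(\cdot+t_n,\cdot+x_n)\to(0,\tilde p,\tilde q)$ locally uniformly. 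Since $F(0,\tilde p,\tilde q)=\frac{r_1}{1+ab}(a-1)(1-k)>0$, the $u$-component invades the semi-trivial state $(0,\tilde p,\tilde q)$ at the linear speed $c_u^{**}$ of \eqref{c**}, so Lemma~\ref{LE-hyp} again rules out $|c'|<c_u^{**}$; exchanging $u$ and $v$ and using $G(\tilde p,0,\tilde q)=\frac{r_2}{1+ab}(a-1)(1-h)>0$ gives the bound for $v$ up to $c_v^{**}$. For (iii), fix $c\in(0,\min(c_u^{**},c_v^{**}))$: by (ii) and $w\ge\beta$ there is $\delta>0$ with $u,v,w\ge\delta$ on $\{|x|\le ct\}$ for all large $t$. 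If the uniform convergence failed, there would be $\eta>0$ and $(t_n,x_n)$, $|x_n|\le ct_n$, with $\|(u,v,w)(t_n,x_n)-(u^*,v^*,w^*)\|\ge\eta$; a moving-frame limit $(U,V,W)$ would then be an entire solution of the full system with $\delta\le U,V\le a-1$, $\delta\le W\le1$, hence a steady state by Section~\ref{sec:lyapunov}, and being bounded below by positive constants it must be the unique positive coexistence equilibrium $(u^*,v^*,w^*)$ of \eqref{def-equi} — contradicting $\|(U,V,W)(0,0)-(u^*,v^*,w^*)\|\ge\eta$.

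\emph{Main difficulty.} The crux is part~(i): a direct sub-/super-solution scheme for a single predator only propagates at the reduced speed $c_u^{**}$, respectively $c_v^{**}$, because once both predators are present the competitive terms depress the slower one, and there is no comparison principle for the three-species system. The role of Lemma~\ref{LE-hyp} is exactly to transfer the invasion of the \emph{predator-free} state — where, to linear order, $u$ and $v$ decouple and each spreads at speed $c_u^*=c_v^*$ — onto the sum $u+v$, propagating a uniform lower bound through every subcritical moving frame without any comparison argument. A secondary technical point, used in~(ii), is the two-species rigidity needed to pin a moving-frame limit at a semi-trivial equilibrium before Lemma~\ref{LE-hyp} can be invoked around it.
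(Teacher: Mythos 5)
Your proposal is correct and follows essentially the same route as the paper: part~(i) via the persistence machinery of Lemma~\ref{LE-weak-strong} combined with the scalar eigenvalue estimate of Lemma~\ref{LE-eig} around the unstable predator-free state (this is the content of Proposition~\ref{PROP}); part~(ii) by identifying the moving-frame limit with a semi-trivial equilibrium through the two-species Lyapunov rigidity of Lemma~\ref{LE_entire}/Lemma~\ref{LE_entire_bis} and then repeating the eigenvalue argument around it (Proposition~\ref{PROP1}); and part~(iii) from (ii) plus the three-species Liouville result Lemma~\ref{LE-entire-full}. The only presentational looseness is in how Lemma~\ref{LE-weak-strong} is invoked: the hypothesis~\eqref{LE-hyp} must be verified for all elements of the orbit closure $\omega_0(c_2,c_1)$ (yielding the uniform-in-forward-time approach to the unstable state that the subsolution argument actually needs), whereas your sketch opens by negating the final $\liminf$ statement directly — but this is exactly what the paper's Propositions~\ref{PROP} and~\ref{PROP1} formalize and the underlying ideas in your write-up match.
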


In the equi-diffusion case, the previous result can be strengthened as follows. In the sequel, we set $\kappa:=(1-k)/(1-h)$.

\begin{theorem}\label{inside}
Under the same assumptions as Theorem~\ref{theo:spread_plus}, suppose also that $w_0\equiv 1$, $d_1 = d_2$ and $r_1 = r_2$, so that in particular $c_u^* = c_v^*$.
Then, for any $c \in [0,c_u^*)$,
\[
\liminf_{t \to +\infty} \inf_{|x|\le ct} u(t,x) > 0,
\]
 if $u_0\ge\kappa v_0$;
while, for any $c\in [0,c_v^*)$, if $u_0\le\kappa v_0$ we have
\[
\liminf_{t \to +\infty} \inf_{|x|\le ct}  v(t,x) > 0.
\]
In each case, we also have
\[
\limsup_{t \to +\infty} \sup_{|x|\le ct} w(t,x)< 1.
\]
\end{theorem}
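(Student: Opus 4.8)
The plan is to exploit the special algebraic structure of the equi-diffusion case to reduce the three-species system to a situation covered by Theorem~\ref{THEO-inside1}(ii). Assume $d_1=d_2=:d$, $r_1=r_2=:r$ and $w_0\equiv 1$, and suppose $u_0\ge\kappa v_0$ with $\kappa=(1-k)/(1-h)$. The key observation is that the quantity $z:=u-\kappa v$ satisfies a \emph{scalar} linear parabolic inequality with no zeroth-order source forcing it negative: writing out $z_t-d z_{xx}$ using \eqref{u-eq}--\eqref{v-eq} with $\mu=0$, one computes
\[
z_t - d z_{xx} = r\big[\,u(-1-u-kv+aw) - \kappa v(-1-hu-v+aw)\,\big],
\]
and the right-hand side should factor as $z$ times a bounded coefficient (this is exactly where the definition $\kappa=(1-k)/(1-h)$ is forced, making the cross terms $-kuv$ and $-\kappa v\cdot(-hu)$ balance together with the linear and $aw$ terms). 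Hence $z$ obeys $z_t-dz_{xx}\ge c(t,x)z$ for some $L^\infty$ coefficient $c$, with $z(0,\cdot)=u_0-\kappa v_0\ge 0$, so by the scalar maximum principle $u(t,x)\ge \kappa v(t,x)$ for all $t>0$. The first step is therefore to verify this factorization carefully and record the invariance $u\ge\kappa v$.

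Given $u\ge\kappa v\ge 0$, the second step is to turn \eqref{u-eq} into a scalar problem for $u$ alone that can be compared from below. Since $v\le \kappa^{-1}u$ and (by the a priori bounds and Theorem~\ref{theo:spread_plus}, or directly by the persistence Lemma~\ref{LE-hyp}) $w$ stays close to $1$ on expanding sets, $u$ is a supersolution of a Fisher--KPP type equation. More precisely, for any $c<c_u^*=2\sqrt{d r(a-1)}$ and any $\ep>0$, on the moving ball $|x|\le ct$ and for $t$ large one has $w\ge 1-\ep$, hence
\[
u_t - d u_{xx} \ge r\,u\big(-1 - (1+k\kappa^{-1})u + a(1-\ep)\big),
\]
which for $\ep$ small has positive linear growth rate $r(a(1-\ep)-1)>0$. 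The standard Fisher--KPP spreading result (Aronson--Weinberger), applied via a compactly supported subsolution that is already available because $u_0$ has nontrivial compact support, then gives $\liminf_{t\to\infty}\inf_{|x|\le ct}u(t,x)>0$ for every $c<2\sqrt{dr(a(1-\ep)-1)}$, and letting $\ep\to 0$ yields the claim for all $c<c_u^*$. The symmetric statement with $v$ and $c_v^*$ follows by interchanging the roles of $u$ and $v$ (noting $c_u^*=c_v^*$ here, but the hypothesis $u_0\le\kappa v_0$ now gives $v\ge\kappa^{-1}u\ge 0$ and the analogous scalar bound).

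The last step is the strict bound $\limsup_{t\to\infty}\sup_{|x|\le ct}w(t,x)<1$. Once we know $u$ (say) is bounded below by some $\delta>0$ on $|x|\le ct$ for large $t$, feed this into \eqref{w-eq}: there $w_t-d_3w_{xx}=r_3 w(1-bu-bv-w)\le r_3 w(1-b\delta - w)$, so $w$ is a subsolution of a logistic equation whose stable state is $1-b\delta<1$; a comparison argument on the moving frame (using that on the boundary $|x|=ct$ we only know $w\le 1$, which still suffices after shrinking the speed slightly and using the persistence lemma to propagate the lower bound on $u$ strictly inside the light cone) gives $\limsup_{t\to\infty}\sup_{|x|\le ct}w(t,x)\le 1-b\delta+o(1)<1$.

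The main obstacle is the first step: establishing that $\kappa=(1-k)/(1-h)$ really does make $u-\kappa v$ satisfy a clean scalar parabolic inequality — this hinges on a small algebraic miracle that must be checked, and it is the only point where the precise form of $F$ and $G$ (and the equality $r_1=r_2$, $d_1=d_2$) is used in an essential, non-perturbative way. A secondary technical difficulty is handling the boundary terms on the moving balls $|x|\le ct$ when $w_0\equiv 1$ only on $\R$ and one has merely $w\le 1$ (not $w\ge 1-\ep$) a priori; this is resolved by the now-standard device of working slightly inside the light cone and invoking Lemma~\ref{LE-hyp} to upgrade the lower bounds, so it adds length but no real new idea.
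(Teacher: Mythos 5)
Your step~1 is correct and is exactly the paper's argument: the identity
\[
U_t = d\,U_{xx} + r\bigl\{-1+aw-[(2-h)\kappa+k]\,v-U\bigr\}U, \qquad U:=u-\kappa v,
\]
holds precisely because $\kappa=(1-k)/(1-h)$, and the scalar maximum principle then preserves the sign of $U$. (One can verify the coefficient of $uv$ on both sides equals $(h-k)/(1-h)$ and the coefficient of $v^2$ equals $\kappa$, which confirms the cancellation you call the ``algebraic miracle.'')

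Step~2, however, has a genuine gap. You assert that for any $c<c_u^*$ and any $\ep>0$, one has $w\ge 1-\ep$ on the moving ball $|x|\le ct$ for $t$ large. This is false, and in fact it is the \emph{opposite} of the last assertion of the theorem, which states $\limsup_{t\to\infty}\sup_{|x|\le ct}w(t,x)<1$: inside the cone the predator $u$ persists and forces $w$ strictly below $1$. What is true (from Theorem~\ref{theo:spread_plus}) is only that $w\to 1$ \emph{outside} the cone $|x|\ge ct$ with $c>c_u^*$, which is of no direct use for a Fisher--KPP subsolution that you want to propagate throughout the interior of the cone. The paper avoids this trap by not trying to run a fresh KPP argument for $u$ alone: instead, from $u\ge\kappa v$ it deduces $u \ge \tfrac{\kappa}{1+\kappa}(u+v)$ and then invokes the already established lower bound on $u+v$ from Theorem~\ref{THEO-inside1}(i) (equivalently Proposition~\ref{PROP}), which gives $\liminf_{t\to\infty}\inf_{|x|\le ct}(u+v)>0$ for every $c<c_v^*=c_u^*$. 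This is the correct way to convert the pointwise ordering from step~1 into a spreading lower bound, and it is where the persistence machinery of Lemma~\ref{LE-weak-strong} enters, not through a clean Fisher--KPP comparison.

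Your step~3 is then essentially unnecessary as a separate argument: the estimate $\limsup_{t\to\infty}\sup_{|x|\le ct}w(t,x)<1$ is delivered by~\eqref{inf_w} of Lemma~\ref{LE-weak-strong}, which is produced along the way when the lower bound on $u+v$ is established. You could recover it by your logistic comparison once a uniform positive lower bound on $u$ (or $v$) is known, but the paper gets it for free from the same persistence lemma.
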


We now turn to the case when $c_v^* < c_u^*$, where the following result answers positively that $c_u^*$ is indeed the spreading speed of $u$.

\begin{theorem}\label{th:middle_zone}
Assume that~\eqref{c1} holds and that in addition $\mu = 0$ and $c_v^* < c_u^*$. Let both $u_0$ and $v_0$ be nontrivial and compactly supported such that $0 \leq u_0, v_0 \leq a-1$, and $ \beta \leq w_0 \leq 1$.
Then the solution $(u,v,w)$ enjoys the following spreading behaviour:
\begin{itemize}
\item [(i)] for each $c \in [0,c_u^*)$, one has
\begin{equation*}
\liminf_{t \to +\infty} \inf_{|x|\le ct} u(t,x) > 0;
\end{equation*}
\item [(ii)] for any $c_v^*<c_1<c_2<c_u^*$,
\begin{equation*}
\lim_{t\to+\infty} \sup_{c_1t\leq |x|\leq c_2 t}\left\|(u,v,w)(t,x)-\left(\tilde p, 0,\tilde q\right)\right\|=0,
\end{equation*}
wherein $\left(\tilde p, 0,\tilde q\right)$ is the constant equilibrium defined in \eqref{def-pq};
\item [(iii)] finally, for each $0<c<c_v^{**}$,
\begin{equation*}
\lim_{t\to+\infty} \sup_{|x|\leq ct}\left\|(u,v,w)(t,x)-(u^*,v^*,w^*)\right\|=0.
\end{equation*}
\end{itemize}
\end{theorem}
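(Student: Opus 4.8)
The plan is to establish the three assertions in order, each one leaning on the previous, on the upper speed bounds of Theorem~\ref{theo:spread_plus}, on the persistence Lemma~\ref{LE-hyp}, and on the Lyapunov-type rigidity for bounded entire solutions proved in Section~\ref{sec:lyapunov}. For~(i), I would first obtain a crude lower bound on the spreading speed of $u$ and then bootstrap it. Since $0\le v\le a-1$ and $\beta\le w\le 1$, freezing $v\equiv a-1$ in the $u$-equation and $v\equiv 0$ in the $w$-equation turns $(u,w)$ into a supersolution, in the partial order $(u_1,w_1)\preceq(u_2,w_2)\iff u_1\le u_2,\ w_1\ge w_2$ for which the predator--prey subsystem is cooperative, of a two-species predator--prey system whose predator invades at speed $2\sqrt{d_1r_1(a-1)(1-k)}=c_u^*\sqrt{1-k}$; the spreading result for such systems (as in~\cite{DGM}) then yields $\liminf_{t\to+\infty}\inf_{|x|\le ct}u>0$ for every $c<c_u^*\sqrt{1-k}$. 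To upgrade this to all $c<c_u^*$, the key point is that $u$ is never blocked: one has $F>0$ both at $(0,0,1)$ and at the semi-trivial state $(0,\tilde p,\tilde q)$ where only $v$ and $w$ persist, since $F(0,\tilde p,\tilde q)=r_1\tilde p(1-k)>0$ (using $a\tilde q=1+\tilde p$). Hence, since $v$ spreads at most with speed $c_v^*<c_u^*$ by Theorem~\ref{theo:spread_plus}, along the leading edge of $u$ in any frame of speed $c\in(c_v^*,c_u^*)$ the data $(v,w)$ converge to $(0,1)$, so that there the $u$-equation is a small perturbation of $u_t=d_1u_{xx}+r_1(a-1)u$; combining a compactly supported Fisher--KPP subsolution built from the principal eigenvalue of $d_1\partial_{xx}+r_1(a-1)$ on a long interval, the crude bound above, and Lemma~\ref{LE-hyp} to keep its amplitude from collapsing, one propagates positivity of $u$ out to $|x|\le ct$ for every $c<c_u^*$. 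Together with Theorem~\ref{theo:spread_plus} this is the assertion.

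For~(ii), fix $c_v^*<c_1<c_2<c_u^*$ and argue by contradiction: if the uniform convergence failed there would be $t_n\to+\infty$, $x_n$ with $c_1t_n\le|x_n|\le c_2t_n$, and $\|(u,v,w)(t_n,x_n)-(\tilde p,0,\tilde q)\|\ge\delta_0>0$. Translating by $(t_n,x_n)$ and using parabolic estimates, along a subsequence $(u,v,w)(\cdot+t_n,\cdot+x_n)$ converges to an entire solution $(u_\infty,v_\infty,w_\infty)$ of~\eqref{u-eq}--\eqref{w-eq}. Since $|x_n+x|/(t_n+t)\to c'\in[c_1,c_2]$ with $c'>c_v^*$, Theorem~\ref{theo:spread_plus} forces $v_\infty\equiv 0$, while~(i) gives $u_\infty\ge m>0$ and the a priori bounds give $\beta\le w_\infty\le 1$. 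Thus $(u_\infty,w_\infty)$ is a bounded entire solution of the two-species predator--prey system with the predator bounded below, hence identically $(\tilde p,\tilde q)$ (the two-species analogue of Section~\ref{sec:lyapunov}, as in~\cite{DGM}), contradicting the choice of $(t_n,x_n)$.

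For~(iii), the first and crucial step is to show that $v$ spreads at least with speed $c_v^{**}$, i.e. $\liminf_{t\to+\infty}\inf_{|x|\le ct}v>0$ for every $c<c_v^{**}$. By~(ii), ahead of the front of $v$ the environment $(u,w)$ is close to $(\tilde p,\tilde q)$, at which the $v$-equation linearises to $v_t=d_2v_{xx}+r_2\tilde p(1-h)v$ with $2\sqrt{d_2r_2\tilde p(1-h)}=c_v^{**}$ by~\eqref{c**} (using $a\tilde q-h\tilde p=1+\tilde p(1-h)$, so that $G(\tilde p,0,\tilde q)=r_2\tilde p(1-h)>0$). Using a compactly supported moving subsolution built from the principal eigenvalue of $d_2\partial_{xx}+r_2\tilde p(1-h)$ on a long interval together with Lemma~\ref{LE-hyp}, one propagates positivity of $v$ inward from the edge down to $|x|\le ct$ for each $c<c_v^{**}$, the upper bound $c_v^*$ for $v$ from Theorem~\ref{theo:spread_plus} and the positivity of $G(\tilde p,0,\tilde q)$ being used to ensure the environment encountered along the way stays favourable. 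Once $u$, $v$, $w$ are all bounded below by positive constants on $\{|x|\le ct\}$ for each $c<c_v^{**}$, the convergence to $(u^*,v^*,w^*)$ follows exactly as in~(ii): any omega-limit solution along $(t_n,x_n)$ with $|x_n|\le ct_n$ is a bounded entire solution of~\eqref{u-eq}--\eqref{w-eq} bounded below by positive constants, hence a constant steady state by Section~\ref{sec:lyapunov}, and the only such state with all three components positive is $(u^*,v^*,w^*)$ from~\eqref{def-equi}; the usual contradiction argument then gives the claimed uniform convergence on $\{|x|\le ct\}$.

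The main obstacle is Step~A of~(iii). Because the full system~\eqref{u-eq}--\eqref{w-eq} admits no comparison principle and the precise location of the front of the slower predator $v$ is not known — a priori it may lie anywhere between $c_v^{**}t$ and $c_v^*t$ — one cannot simply compare with a sub/supersolution on the whole space; instead one has to control $(u,w)$ in a moving neighbourhood of that unknown front using only Lemma~\ref{LE-hyp} and the convergence~(ii) in the far middle zone, and then transport the resulting positivity of $v$ inward through regions where $(u,w)$ may already differ from $(\tilde p,\tilde q)$. This is precisely where a `nonlocal pulling' effect can appear, which is why only the lower bound $c_v^{**}$, and no matching upper bound, is obtained for the spread of $v$.
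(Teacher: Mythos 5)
Your overall architecture is the right one -- persistence lemma (Lemma~\ref{LE-weak-strong}) in the middle zone, upper speed bound on $v$ to clean out $v$ there, and the Lyapunov rigidity of Section~\ref{sec:lyapunov} to force convergence to $(\tilde p,0,\tilde q)$ and to $(u^*,v^*,w^*)$. Parts (ii) and~(iii) as you sketch them match the paper's proof in spirit (the paper establishes (ii) \emph{before} completing~(i), inside Proposition~\ref{PROP_inf}, but the logical content is the same). The problem is the opening move of~(i).

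Your Stage~1 asserts that after freezing $v\equiv a-1$ in the $u$-equation and $v\equiv 0$ in the $w$-equation, the resulting $(u,w)$-system is cooperative for the order $(u_1,w_1)\preceq(u_2,w_2)\iff u_1\le u_2,\ w_1\ge w_2$, and therefore $(u,w)$ can be compared with a solution of the frozen system. That is false: with $f(u,w)=uF(u,a-1,w)$ and $g(u,w)=wH(u,0,w)$ one has $f_w=r_1au>0$ and $g_u=-r_3bw<0$, so the cross-derivatives have \emph{opposite} signs. That is precisely the predator--prey structure, which is neither cooperative nor competitive under any reordering of the two components. There is no comparison principle here -- the paper stresses this as the central difficulty of the whole problem -- so the ``crude lower bound $c<c_u^*\sqrt{1-k}$'' is not established, and Stage~2 then has nothing to bootstrap from. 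Lemma~\ref{LE-weak-strong} alone only yields uniform positivity of $u$ on wedges $\{c_2t\le x\le ct\}$ with $c_v^*<c_2<c<c_1<c_u^*$; it does not, by itself, reach down to the region $0\le x\le c_2t$.

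The paper closes this gap differently: Proposition~\ref{PROP1} (a persistence argument valid only for small speeds) combined with the Lyapunov Lemmas~\ref{LE_entire} and~\ref{LE-entire-full} yields Corollary~\ref{COR-conv}, which says that at any \emph{fixed} location the solution converges to either $(\tilde p,0,\tilde q)$ or $(u^*,v^*,w^*)$, so that $\lim_{t\to\infty}u(t,0)\ge\min(\tilde p,u^*)>0$. With that local anchor and the middle-zone positivity in hand, the second half of the proof of Proposition~\ref{PROP_inf} runs a three-time-sequence contradiction argument (in the spirit of Step~2 of Lemma~\ref{LE-weak-strong}) to show that $u$ cannot collapse anywhere in $\{0\le x\le c_1t\}$. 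That is the ingredient your proposal is missing; without a comparison principle there is no shortcut around it. The same issue also leaves Step~A of your~(iii) -- propagating positivity of $v$ up to speed $c_v^{**}$ through the region $\{|x|\le c_v^*t\}$ where the environment is \emph{not} close to $(\tilde p,\tilde q)$ -- genuinely unaddressed; the paper handles it through Proposition~\ref{PROP1}(i), whose proof again combines the persistence lemma with the two-species Lyapunov rigidity rather than any sub/supersolution comparison.
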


Together with Theorem~\ref{theo:spread_plus}, this describes accurately the spreading of the faster predator when $c_u^* > c_v^*$.
However, it is still only known that $v$ spreads at least at speed $c_v^{**}$, and at most at speed $c_v^*$. It remains an open question whether $v$ has a definite spreading speed and what this spreading speed is.
While one may expect that the spreading speed of $v$ is $c_v^{**}$, which indeed corresponds to the linear invasion speed into the intermediate semi-trivial steady state $(\tilde{p},0,\tilde{q})$,
we can actually construct a situation where the spreading speed is strictly faster than $c_v^{**}$. This is comparable to the nonlocally pulled phenomenon which occurs in competition systems~\cite{GL}.

\begin{theorem}[Nonlocal Pulling]\label{th-NP}
Under the same assumptions as in Theorem \ref{th:middle_zone}, assume furthermore that
\begin{equation*}
\begin{split}
&{1-2ab-h>0,}\\
&c_v^{**} - \sqrt{\left(c_v^{**}\right)^2 - 4 d_2 r_2 (a\beta - 1 - h (a-1))} >  \frac{\left(c_u^*\right)^2 - \left(c_v^*\right)^2}{2(c_u^* - c_v^{**}) }.
\end{split}
\end{equation*}
Then there exists $c>c_v^{**}$ (independent of the initial data) such that
$$
\liminf_{t\to +\infty} \inf_{|x|\leq ct}v(t,x)>0.
$$
\end{theorem}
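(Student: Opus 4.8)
The plan is to exhibit a single speed $c>c_v^{**}$ for which a scalar comparison argument forces $v$ to stay bounded below on $\{|x|\le ct\}$, the excess over the linear value $c_v^{**}$ (which corresponds to invasion of the semi-trivial state $(\tilde p,0,\tilde q)$) being drawn from the leading edge of the $u$-front, where the prey $w$ is far less depleted and hence the per capita growth rate of $v$ is much larger. The nonlocal character of the phenomenon is that this favourable layer travels at $c_u^*$, strictly faster than the frame in which one wants $v$ to persist.

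First I would reduce to a scalar problem for $v$. By Theorem~\ref{th:middle_zone} together with Theorem~\ref{theo:spread_plus}, the faster predator $u$ invades at speed exactly $c_u^*$, with $(u,v,w)\to(\tilde p,0,\tilde q)$ on each set $\{c_1t\le|x|\le c_2t\}$, $c_v^*<c_1<c_2<c_u^*$, while $u\to0$ and $w\to1$ on $\{|x|\ge(c_u^*+\eta)t\}$; in particular $u$ is uniformly small, and $w$ close to $1$, in a neighbourhood of width $O(1)$ (up to the usual logarithmic shift) travelling at speed $c_u^*$. Using in addition the a priori bounds $0\le u\le a-1$, $\beta\le w\le1$ from Section~\ref{sec:prelim} and the first hypothesis $1-2ab-h>0$ --- which is exactly equivalent to $a\beta-1-h(a-1)=(a-1)(1-2ab-h)>0$ --- one obtains that $v$ is a supersolution of the scalar, spatially heterogeneous Fisher--KPP equation
$$
\tilde v_t=d_2\tilde v_{xx}+r_2\tilde v\,\big[(a\beta-1-h(a-1))+\mathcal G(t,x)-\tilde v\big],
$$
where $\mathcal G(t,x):=\big(-1-hu(t,x)+aw(t,x)\big)-\big(a\beta-1-h(a-1)\big)\ge0$ and, crucially, $\mathcal G$ is bounded below by a fixed positive constant (close to $a(1-\beta)+h(a-1)$) on the travelling favourable window near $x=c_u^*t$. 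This scalar equation does obey a comparison principle, and everything reduces to showing that it spreads at a speed strictly larger than $c_v^{**}$.

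For that I would follow the sub-solution strategy introduced by Girardin and Lam in the competition setting~\cite{GL}. One fixes a candidate speed $c\in(c_v^{**},c_v^*)$ and builds, for $t$ large, a (time-dependent, piecewise-exponential) sub-solution of the scalar equation whose support spans the whole region between $v$'s front, at $\approx ct$, and the favourable window, at $\approx c_u^*t$: on the bulk part it behaves like a small multiple of $e^{-\lambda(x-ct)}$ with $\lambda$ the slow root of $d_2\lambda^2-c\lambda+r_2(a\beta-1-h(a-1))=0$ (admissible since $c>c_v^{**}>2\sqrt{d_2r_2(a\beta-1-h(a-1))}$), while near and ahead of $x=c_u^*t$ it behaves like a small multiple of $e^{-\mu(x-c_u^*t)}$ with $\mu$ a root of $d_2\mu^2-c_u^*\mu+r_2(a-1)=0$, these roots being real precisely because $c_u^*>c_v^*$ --- this is where $(c_u^*)^2-(c_v^*)^2$ enters. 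The two exponential regimes are matched along a moving interface placed at a fixed fraction of the way from $ct$ to $c_u^*t$; the glued function is a genuine sub-solution for some admissible $c>c_v^{**}$ exactly when the slow bulk decay rate available at speed $c_v^{**}$ beats the decay rate handed over by the $u$-front through this matching, which after the (elementary but not short) optimisation over the interface position and over $c$ reduces precisely to
$$
c_v^{**}-\sqrt{(c_v^{**})^2-4d_2r_2(a\beta-1-h(a-1))}\ >\ \frac{(c_u^*)^2-(c_v^*)^2}{2(c_u^*-c_v^{**})},
$$
the second stated hypothesis. Since the sub-solution is constructed only from the universal lower bounds and the universal invasion behaviour of $u$, the resulting speed $c$ does not depend on the initial data; a sliding/comparison argument on the scalar equation then gives $\liminf_{t\to+\infty}\inf_{|x|\le ct}v(t,x)>0$, the construction on the left being the mirror image of the one on the right.

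The main obstacle is the sub-solution itself. Unlike in a textbook pulled-front computation, it cannot be supported in a bounded window of any fixed moving frame: the favourable layer travels strictly faster ($c_u^*$) than the frame ($c<c_u^*$) in which persistence is sought, so the sub-solution must bridge a region of linearly growing width, and the exponential matching at the moving interface --- which is where both hypotheses are consumed --- has to be carried out uniformly in time. A secondary, pervasive difficulty is the absence of a comparison principle for the full three-species system, which is precisely why the favourable lower bound on the growth rate of $v$ has to be extracted beforehand from Theorem~\ref{th:middle_zone}, the a priori estimates, and (where needed) the persistence Lemma~\ref{LE-hyp}, before any comparison can be invoked. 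Finally, this argument only yields the existence of some admissible $c>c_v^{**}$ and is not expected to be sharp, which is consistent with the exact spreading speed of $v$ being left open.
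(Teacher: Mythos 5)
Your proposal follows the paper's architecture: the a priori bounds and the established invasion behaviour of $u$ reduce the problem to a scalar comparison for $v$, a two-regime subsolution is then glued across the moving front of $u$, and the gluing condition is exactly where the two hypotheses are consumed. However, a few details differ from the paper. In the reduction step the paper simply replaces $u$ and $w$ by piecewise-constant envelopes $\overline u,\underline w$ jumping across $x=(c_u^*+\varepsilon)t$; no ``layer of width $O(1)$'' is invoked. For the edge ansatz you propose a stationary exponential $e^{-\mu(x-c_u^*t)}$ with $\mu$ a real root of $d_2\mu^2-c_u^*\mu+r_2(a-1)=0$, which is not bounded on the right and would need a further truncation, whereas the paper uses the compactly supported, time-decaying bump $\epsilon\,e^{-rt}e^{-\nu x}\sin(\omega x)$ in the frame $x=c_\varepsilon t$, with $\nu=c_\varepsilon/(2d_2)$ the vertex of the parabola and $r=\tfrac{c_\varepsilon^2}{4d_2}+d_2\varepsilon^2-r_2(a-1-3\varepsilon)\to\tfrac{(c_u^*)^2-(c_v^*)^2}{4d_2}$ as $\varepsilon\to 0$; your condition that $\mu$ be real and the paper's condition that $r>0$ are the same discriminant $c_u^*>c_v^*$, so the role of $(c_u^*)^2-(c_v^*)^2$ is identical, just framed differently. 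You also suggest matching at an interface placed at a tunable fraction of $[ct,c_u^*t]$ and optimising over it; the paper puts the interface at $x=c_\varepsilon t$ where $\overline u,\underline w$ change, derives the single gluing inequality $\lambda(c,\varepsilon)(c_\varepsilon-c)>r$, and evaluates it at $c=c_v^{**}$, $\varepsilon\to 0$, with no optimisation. Finally, the upgrade from $\liminf_t v(t,ct)>0$ to $\liminf_t\inf_{|x|\le ct}v>0$ is done in the paper via Corollary~\ref{COR-conv} and the argument used for~\eqref{u_inf}, not a scalar sliding argument. Your observation that the subsolution's support widens linearly in $t$ and cannot be confined to any fixed moving window is correct and a genuine subtlety; the paper treats it implicitly by keeping both pieces compactly supported for each fixed $t$ and verifying the matching survives as $t\to+\infty$.
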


\begin{remark}
The conditions proposed above hold in particular when $1-2ab -h >0$ and $0<d_1r_1-d_2 r_2$ is sufficiently small.
This ensures that there is a set of parameters which satisfy the assumptions of Theorem~\ref{th-NP}, thus the nonlocal pulling phenomenon does indeed occur.
\end{remark}

\section{Preliminaries}\label{sec:prelim}

\subsection{Some a priori estimates}

Let $(u,v,w)$ be a solution of the Cauchy problem for system \eqref{u-eq}-\eqref{w-eq} with the initial condition \eqref{ic}, under the assumption \eqref{c2}.
It is clear that the solution $(u,v,w)$ exists globally.
By the (strong) comparison principle (for scalar equations), it is easy to see that $u,v,w>0$ in $(0,\infty) \times \R$, and $w<1$ in $(0,\infty) \times \R$.
Then
\beaa
\begin{cases}
u_t \leq d_1 u_{xx} + r_1 u (a-1 -u ) + \mu (v-u),\\
v_t \leq d_2 u_{xx} + r_2 v (a-1  - v) + \mu (u-v).
\end{cases}
\eeaa
This system is of the cooperative type and satisfies a comparison principle. It is then straightforward that $u,v<a-1$ in $(0,\infty) \times \R$.
Moreover, by \eqref{c2} and \eqref{w-eq}, another comparison principle gives that $w\ge\beta$ in $(0,\infty) \times \R$.

We sum up the above in the following proposition.

\begin{proposition}\label{prop:prelim_estim}
Assume that $(u,v,w)$ solves \eqref{u-eq}-\eqref{w-eq} together with~\eqref{ic} with $\mu \geq 0$, where $0 \leq u_0 ,v_0 \leq a-1$, $\beta \leq w_0 \leq 1$.
Then the inequalities $0 \leq u, v \leq a-1$, $\beta \leq w \leq 1$ also hold for all $t >0$.
\end{proposition}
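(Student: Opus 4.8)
The plan is to prove that the closed box
$$
R:=[0,a-1]\times[0,a-1]\times[\beta,1]\subset\R^3
$$
is a positively invariant region for system~\eqref{u-eq}--\eqref{w-eq}. Since by hypothesis the initial datum $(u_0,v_0,w_0)$ takes values in $R$, invariance immediately yields the stated bounds for all $t>0$. Because the diffusion matrix is diagonal and $R$ is a rectangle with faces parallel to the coordinate hyperplanes, the invariant-region theorem for reaction--diffusion systems applies, and it suffices to check that on each of the six faces of $R$ the reaction vector field points into $R$ (or is tangent to it). The key advantage of this framework is that it needs no comparison principle for the fully coupled three-species system, which is precisely the structural difficulty here.

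Concretely, the first thing I would do is run through the six faces. On $\{u=0\}$ the $u$-reaction equals $0\cdot F+\mu(v-0)=\mu v\ge 0$, and symmetrically on $\{v=0\}$ the $v$-reaction equals $\mu u\ge 0$; on $\{w=0\}$ the $w$-reaction vanishes. On $\{u=a-1\}$ the $u$-reaction equals
$$
(a-1)r_1\bigl(aw-a-kv\bigr)+\mu\bigl(v-(a-1)\bigr),
$$
which is $\le 0$: the first term is $\le 0$ because $w\le 1$, $v\ge 0$ and $k>0$, while the second is $\le 0$ because $v\le a-1$; the face $\{v=a-1\}$ is symmetric. On $\{w=1\}$ the $w$-reaction equals $-r_3 b(u+v)\le 0$, and on $\{w=\beta\}$ it equals $r_3\beta\bigl(1-b(u+v)-\beta\bigr)\ge 0$ since $u+v\le 2(a-1)$ and $\beta=1-2b(a-1)$. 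Hence the field is inward-pointing on $\partial R$, $R$ is invariant, and $0\le u,v\le a-1$, $\beta\le w\le 1$ for all $t>0$. The strict inequalities $u,v>0$ and $w<1$ (and $w>\beta$ unless $w_0\equiv\beta$) then follow from the strong maximum principle applied to each scalar equation separately.

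The only genuine obstacle is the absence of a comparison principle for the three-species system: it is not quasimonotone globally, so one cannot simply sandwich the solution between space-homogeneous sub- and supersolutions componentwise in the usual way. The invariant-rectangle argument sidesteps this because it uses only the sign of the reaction on $\partial R$ and tolerates distinct diagonal diffusivities for an axis-parallel box. A minor technical point is that the problem is posed on all of $\R$; this is harmless since the data are uniformly continuous and the solution is bounded, so the maximum principles for bounded solutions apply. Should one prefer to avoid invoking the invariant-region theorem, the same bounds can be obtained sequentially: first $u,v\ge 0$ and $w>0$ from the quasipositivity of the reaction, then $w\le 1$ by scalar comparison with the constant $1$ (using $u,v\ge0$), then $u,v\le a-1$ by comparing the cooperative pair $u_t\le d_1 u_{xx}+r_1 u(a-1-u)+\mu(v-u)$, $v_t\le d_2 v_{xx}+r_2 v(a-1-v)+\mu(u-v)$ (whose off-diagonal coupling $\mu\ge0$ makes it quasimonotone) with the stationary state $(a-1,a-1)$, and finally $w\ge\beta$ by scalar comparison with the constant $\beta$, using $w_t\ge d_3 w_{xx}+r_3 w(\beta-w)$ once $u,v\le a-1$ is known.
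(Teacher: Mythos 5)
Your proposal is correct, and your primary route is genuinely different from the paper's. You invoke the invariant-rectangle theorem for reaction--diffusion systems (Chueh--Conley--Smoller type), which applies here because the diffusion matrix is diagonal and $R$ is an axis-parallel box, so the only thing to check is that the reaction field points into $R$ on each of its six faces -- and your face-by-face verification is sound (in particular, on $\{w=\beta\}$ the sign works precisely because $\beta=1-2b(a-1)$ and $u+v\le 2(a-1)$, and on $\{u=a-1\}$ both the $F$-term and the $\mu$-term are separately nonpositive). The paper instead runs a sequential argument: strong maximum principle on each scalar equation to get $u,v,w>0$ and $w<1$; then the observation that, using $w\le 1$, the pair $(u,v)$ is a subsolution of the cooperative system $u_t\le d_1u_{xx}+r_1u(a-1-u)+\mu(v-u)$, $v_t\le d_2v_{xx}+r_2v(a-1-v)+\mu(u-v)$, which yields $u,v<a-1$ by comparison with the constant pair $(a-1,a-1)$; and finally a scalar comparison for $w$, using $u+v\le 2(a-1)$, to get $w\ge\beta$. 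The ``sequential'' alternative you sketch at the end of your write-up is exactly this chain of scalar/cooperative comparisons, with the same ordering of dependencies, so you have in fact also reproduced the paper's proof. The invariant-rectangle route is more packaged but requires citing the invariant-region theorem and verifying that it applies on an unbounded domain; the paper's route is more elementary in that it only uses standard comparison for scalar equations and for a two-component cooperative system, and it has the small bonus of directly yielding the strict inequalities $u,v>0$, $w<1$ that the paper uses later.
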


\subsection{Orbit closure sets and key persistence lemmas}

In this section and for convenience, we introduce
\begin{equation}\label{X0}
X_0 := \{ (u_0, v_0,w_0) \in  (UC^0 (\R ; \R))^3  \, | \ 0 \leq u_0, \ v_0 \leq a -1 , \ \beta \leq w_0 \leq 1 \},
\end{equation}
where $UC^0 (\R; \R)$ denotes the set of uniformly continuous and bounded functions from~$\R $ to $\R$. In other words, $X_0$ denotes the set of initial data satisfying~\eqref{c2}.

According to Proposition~\ref{prop:prelim_estim}, for any $(u_0,v_0,w_0) \in X_0$, the associated solution $(u,v,w)$ satisfies the same inequalities for all positive times, i.e.,
$$0 \leq u , v\leq a -1, \quad \beta \leq w \leq 1.$$
We also observe the following property. Let $(u,v,w)$ be a solution of \eqref{u-eq}-\eqref{w-eq} with initial data $(u_0,v_0,w_0 ) \in X_0$.
If $\mu >0$ and there exists $(t_0,x_0)\in\R^2$ with either $u (t_0,x_0)=0$ or $v (t_0,x_0)=0$, then, by the strong maximum principle,
$$
u(t,x)=0\text{ and } v(t,x)=0\text{ for all }(t,x)\in\R^2.
$$
On the other hand, if $\mu=0$ and there exists $(t_0,x_0)\in\R^2$ with $u (t_0,x_0)=0$  (resp. $v (t_0,x_0)=0$), then
$$
 u(t,x)=0\;\;(\text{resp. } v(t,x)=0)\text{ for all }(t,x)\in\R^2.
$$

In order to state our key lemma, it is also convenient to introduce some new function set $\omega_0 (c_2,c_1)$,
which roughly stands for the closure (with respect to the locally uniform topology) of the orbits of the solution in moving frames with speeds in the interval $[c_2,c_1]$. More precisely:
\begin{definition}\label{definition-omega0}
For any $(u_0,v_0,w_0) \in X_0$, and any $0 \leq c_2 < c_1$, we define the set
$$\omega_0 \left( c_2,c_1 \right) := closure \, \{ (u,v,w) (t , \cdot + ct) \, | \ t \geq 0, \ c \in [c_2,c_1] \} \subset X_0 ,$$
where $(u,v,w)$ denotes the solution of \eqref{u-eq}-\eqref{w-eq} with the initial data $(u_0,v_0,w_0)$.
Herein the closure is understood with respect to the locally uniform topology in $\R$.
\end{definition}
We point out that the fact that $\omega_0 (c_2,c_1)$ is a subset of $X_0$ is a straightforward consequence of the above discussion together with standard parabolic estimates.
In the sequel we have to keep in mind that this set depends on the initial data and, for notational convenience we omit to explicitly write down the dependence with respect to the initial data.


We now turn to some important lemma that shall be used several times throughout this manuscript.
\begin{lemma}\label{LE-weak-strong}
Assume that $(u_0,v_0,w_0) \in X_0$. Let $0 \leq c_2 < c_1 $ and $\zeta, \xi \geq 0$ be given such that $\zeta+\xi>0$. 
Assume that for  any $c\in [c_2,c_1 )$ there exists $\varepsilon (c) >0$ such that for any $(\tilde u_0,\tilde v_0,\tilde w_0) \in \omega_0(c_2,c_1)$ with  $\zeta \tilde u_0 + \xi \tilde v_0  \not \equiv 0$,
the corresponding solution $(\tilde u,\tilde v,\tilde w)$ satisfies
\begin{equation}\label{LE-hyp}
\displaystyle\limsup_{t\to +\infty} \left(\zeta \tilde u(t,ct)+ \xi \tilde v(t,ct)\right)\geq \varepsilon(c).
\end{equation}
Then for any $c\in [c_2,c_1)$ the solution $(u,v,w)$ satisfies:
\begin{equation}\label{inf}
\liminf_{t\to + \infty}\inf_{c_2t \leq x\leq c t}\left(\zeta u(t,x)+ \xi v(t,x)\right)>0,
\end{equation}
as well as
\begin{equation}\label{inf_w}
\limsup_{t\to +\infty}\sup_{c_2t \leq x\leq c t} w(t,x) < 1.
\end{equation}
\end{lemma}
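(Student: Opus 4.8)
The plan is to show that a "weak" uniform persistence statement (the limsup bound~\eqref{LE-hyp} along every ray in the cone, for every orbit in the closure set) can be upgraded to a "strong" uniform persistence statement (the liminf bound~\eqref{inf}), by a contradiction-and-compactness argument of the type developed in~\cite{DGM}. First I would argue by contradiction: suppose~\eqref{inf} fails for some $c\in[c_2,c_1)$. Then there exist sequences $t_n\to+\infty$ and $x_n\in[c_2 t_n, c t_n]$ such that $\zeta u(t_n,x_n)+\xi v(t_n,x_n)\to 0$. Writing $c_n := x_n/t_n \in [c_2,c]$, pass to a subsequence so that $c_n \to c_\infty \in [c_2,c]$, and consider the translated solutions $(u,v,w)(t+t_n, \cdot + x_n)$. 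By Proposition~\ref{prop:prelim_estim} and standard parabolic (Schauder) estimates these are precompact in the locally uniform topology, so along a further subsequence they converge to an entire in time solution $(\hat u,\hat v,\hat w)$ of~\eqref{u-eq}-\eqref{w-eq}, and $(\zeta\hat u+\xi\hat v)(0,0)=0$.

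The next step is to identify where this limit solution "lives": for each fixed $t\in\R$, the function $x\mapsto (\hat u,\hat v,\hat w)(t,x)$ is a locally uniform limit of $(u,v,w)(t+t_n,\cdot+x_n) = (u,v,w)(t+t_n,\cdot + c_n t_n + \text{(lower order)})$, and since $t+t_n \ge 0$ for $n$ large and $x_n/(t+t_n)\to c_\infty \in [c_2,c_1)$, each such translate lies in $\omega_0(c_2,c_1)$; hence the whole trajectory $\{(\hat u,\hat v,\hat w)(t,\cdot)\, :\, t\in\R\}$ is contained in the closed set $\omega_0(c_2,c_1)$. In particular the initial slice $(\hat u,\hat v,\hat w)(0,\cdot)\in\omega_0(c_2,c_1)$. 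Now I would split into two cases according to whether $\zeta\hat u_0+\xi\hat v_0\equiv 0$ or not. If it is not identically zero, apply the hypothesis~\eqref{LE-hyp} with $c=c_\infty$ (valid since $c_\infty\in[c_2,c_1)$): we get $\limsup_{t\to+\infty}(\zeta\hat u(t,c_\infty t)+\xi\hat v(t,c_\infty t))\ge \varepsilon(c_\infty)>0$. But translating back, $(\zeta\hat u+\xi\hat v)(0,0)=0$ should propagate: by the strong maximum principle applied to the cooperative-type inequalities satisfied by $u$ and $v$ (as in the a priori estimates), and using $\zeta+\xi>0$, a zero of $\zeta\hat u+\xi\hat v$ at one point forces $\hat u\equiv\hat v\equiv 0$ on $(-\infty,0]\times\R$, hence — running the equations forward — $\hat u\equiv\hat v\equiv 0$ everywhere, contradicting the positive limsup. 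So in fact $\zeta\hat u_0+\xi\hat v_0\equiv 0$ is the only possibility, which by the same strong maximum principle argument gives $\hat u\equiv\hat v\equiv 0$ identically.

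It remains to derive a contradiction from $\hat u\equiv\hat v\equiv 0$. In that case $\hat w$ solves $\hat w_t = d_3\hat w_{xx} + r_3\hat w(1-\hat w)$ on all of $\R\times\R$ with $\beta\le\hat w\le 1$, so $\hat w\equiv 1$ (the only entire bounded solution bounded below by a positive constant). Thus, near the point $(t_n,x_n)$, the solution has $u,v$ small and $w$ close to $1$ — i.e. the solution enters the basin of a regime where the predator-free state is linearly unstable (because $F(0,0,1)=r_1(a-1)>0$, $G(0,0,1)=r_2(a-1)>0$). The standard way to close this is to note that the weak persistence hypothesis~\eqref{LE-hyp} was supposed to hold for \emph{all} elements of $\omega_0(c_2,c_1)$ with $\zeta\tilde u_0+\xi\tilde v_0\not\equiv 0$; but actually the contradiction is cleaner: take instead a sequence realizing the failure and, \emph{before} extracting the limit, use~\eqref{LE-hyp} along $c=c_n$ applied to the original solution's translate $(u,v,w)(\cdot+s,\cdot+c_n s)\in\omega_0$ at an earlier time $s<t_n$ to get a definite lower bound $\varepsilon(c_n)$ at time $t_n$; passing to the limit and using continuity of $c\mapsto\varepsilon(c)$ (which one arranges by replacing $\varepsilon(c)$ with $\inf_{[c_2,c]}\varepsilon$, harmless since the interval is compact in $[c_2,c_1)$), one contradicts $\zeta u(t_n,x_n)+\xi v(t_n,x_n)\to 0$. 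Finally,~\eqref{inf_w} follows from~\eqref{inf}: once $\zeta u+\xi v$ is bounded below by $\delta>0$ on the cone, at least one of $u,v$ is bounded below by $\delta/(\zeta+\xi)$ there, say $u$ along a subsequence; feeding this into~\eqref{w-eq}, $w_t\le d_3 w_{xx}+r_3 w(1-b\,\delta/(\zeta+\xi)-w)$, and a comparison argument in the moving frame pushes $\limsup w$ below $1$. The main obstacle is the compactness/limiting step: ensuring the translated orbits genuinely converge to an \emph{entire} solution still lying in $\omega_0(c_2,c_1)$ (so that~\eqref{LE-hyp} is applicable to its initial slice), and correctly handling the case distinction $\zeta\hat u_0+\xi\hat v_0\equiv 0$ versus $\not\equiv 0$ via the strong maximum principle — this is exactly the delicate "weak-to-strong" bootstrap, and the rest is comparison-principle bookkeeping.
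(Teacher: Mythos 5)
Your outline hits some correct preliminary points (the compactness/extraction machinery, identifying the limit's initial slice as an element of $\omega_0(c_2,c_1)$, the strong maximum principle dichotomy, and the rough treatment of $w$), but there is a genuine gap at the heart of the argument, precisely where a weak (limsup) persistence hypothesis must be upgraded to a strong (liminf) conclusion.

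The problem is this. After extracting the limit at the times $t_n$ where $\zeta u(t_n,x_n)+\xi v(t_n,x_n)\to 0$, you correctly deduce via the strong maximum principle that $\hat u\equiv\hat v\equiv 0$. But then hypothesis~\eqref{LE-hyp} is \emph{vacuous} for this limit (it only applies when $\zeta\tilde u_0 + \xi\tilde v_0\not\equiv 0$), so no contradiction arises from this limit alone. Your proposed repair --- ``use~\eqref{LE-hyp} at an earlier time $s<t_n$ to get a definite lower bound $\varepsilon(c_n)$ at time $t_n$'' --- is incorrect: the hypothesis is a $\limsup$, which yields recurring lower bounds along an uncontrolled sequence of times tending to infinity, not a pointwise bound at the prescribed time $t_n$. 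There is no way to get a pointwise estimate at $t_n$ out of a $\limsup$ without more structure, and assuming otherwise begs the whole question (it is essentially assuming the conclusion). The paper avoids this trap by extracting the limit not at $t_n$ but at a carefully chosen ``last threshold-crossing time'': one picks $t_n$ so that $(\zeta u + \xi v)(t_n,c t_n)=\varepsilon(c)/2$, stays $\le\varepsilon(c)/2$ on $[t_n,s_n]$, and drops to $1/n$ at $s_n$, then shows $s_n-t_n\to\infty$. The resulting limit is \emph{nontrivial} (value $\varepsilon(c)/2$ at the origin) yet its forward orbit along the ray stays $\le\varepsilon(c)/2$, and only \emph{that} contradicts~\eqref{LE-hyp}.

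There is a second, separate gap: even granting a ray-wise $\liminf$ bound for each fixed $c$, you do not automatically get the cone bound~\eqref{inf}, because neither $\varepsilon_1(c)$ nor the time beyond which the ray-wise bound holds is uniform in $c$. Your choice $c_n=x_n/t_n\to c_\infty$ and extraction along the ray $c_\infty$ would, at best, recover the ray-wise estimate, not the uniform cone estimate. The paper's second step treats precisely this issue with an additional, rather delicate construction (the auxiliary sequences $t_n'$, $t_n''$ in the $c_2$-shifted frame), again of the ``threshold-crossing'' type. In short: the proposal correctly frames the compactness and SMP bookkeeping but is missing the central idea — the threshold-crossing limit extraction — in \emph{both} places where it is needed.
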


\begin{proof}
The proof of this lemma involves three main steps described below.

\noindent{\bf First step.} In this first step we shall show that for any $c\in [c_2,c_1)$ there exists $\varepsilon_1(c)>0$ such that
\begin{equation}\label{first}
\liminf_{t\to +\infty} \left(\zeta u(t,ct)+ \xi v(t,ct)\right)\geq \varepsilon_1(c).
\end{equation}
To prove the above lower estimate we argue by contradiction by fixing $c\in  [c_2,c_1)$ and by assuming (using~\eqref{LE-hyp} for $(u,v,w)$) that there exist a sequence $t_n\to +\infty$ and a sequence $s_n> t_n$
such that for all $n$ large enough
\begin{equation*}
\begin{cases}
[\zeta u+ \xi v](t_n,c_nt_n)=\frac{\varepsilon(c)}{2},\\
[\zeta u+ \xi v](t,c_nt)\leq \frac{\varepsilon(c)}{2},\;\forall t\in [t_n,s_n],\\
[\zeta u+ \xi v](s_n,c_ns_n)\leq \frac{1}{n}.
\end{cases}
\end{equation*}
Then, possibly up to a sub-sequence not relabelled, one may assume that
\begin{equation*}
(u,v,w)(t+t_n,x+c t_n )\to 
(u_\infty,v_\infty,w_\infty)(t,x),
\end{equation*}
as $n \to +\infty$, where the limit functions $(u_\infty, v_\infty , w_\infty)$ solve \eqref{u-eq}-\eqref{w-eq} for all $t \in \R$ and
$$(u_\infty , v_\infty ,  w_\infty ) (0, \cdot) \in \omega_0 (c_2,c_1).$$
We also have by construction that
\begin{equation*}
\zeta  u_\infty (0,0)  + \xi  v_\infty (0,0)=\frac{\varepsilon(c)}{2},
\end{equation*}
so that, by the strong maximum principle, $\zeta u_\infty+ \xi  v_\infty>0$.
Furthermore let us notice that $s_n-t_n\to +\infty$ as $n\to +\infty$. Indeed, if the sequence $\{s_n-t_n\}$ has a converging sub-sequence to $\sigma\in\R$, then the limit functions satisfy
\begin{equation*}
\zeta  u_\infty (\sigma,c\sigma)+ \xi  v_\infty (\sigma,c\sigma)=0.
\end{equation*}
So that if for instance $\zeta  u_\infty (0,0)>0$ -- hence $\zeta>0$ -- then the previous equality ensures that $ u_\infty (\sigma,c\sigma)=0$ and $ u_\infty (t,x)=0$ for any $(t,x)\in\R^2$, a contradiction.

As a consequence, since $s_n-t_n\to +\infty$ as $n\to +\infty$, one obtains that
\begin{equation*}
\begin{split}
&\zeta  u_\infty (t,ct)+ \xi  v_\infty (t,ct)\leq \frac{\varepsilon(c)}{2},\;\forall t\geq 0.
\end{split}
\end{equation*}
Recalling that $( u_\infty ,  v_\infty,  w_\infty  ) (0,\cdot) \in \omega_0 (c_2,c_1)$ and $\zeta  u_\infty (0,\cdot) + \xi  v_\infty (0,\cdot) \not \equiv 0$, this contradicts~\eqref{LE-hyp}
and completes the proof of \eqref{first}.

\noindent{\bf Second step.} We now prove \eqref{inf}. To that aim we consider the shifted function
$(\hat u,\hat v,\hat w)(t,x):=(u,v,w)(t,x+c_2t)$.
Once more, we proceed by contradiction. Let $\tilde c\in(c_2,c_1)$ be given. Assume that there exist $t_n \to +\infty$, $c_n \in [0,\tilde c-c_2)$ such that
\begin{equation*}
\lim_{n\to +\infty} (\zeta u+ \xi v)(t_n, (c_2+c_n) t_n)=\lim_{n\to +\infty} (\zeta \hat u+ \xi \hat v)(t_n, c_n t_n)=0.
\end{equation*}
Without loss of generality, up to a sub-sequence, we assume that $c_n \to c\in [0,\tilde c-c_2]$. Choose $c'$ such that $c<c'<c_1-c_2$
and define the sequence
$$
t'_n := \frac{c_n t_n}{c'}\in [0,t_n),\;\forall n\geq 0.
$$
Consider first the case when the sequence $\{c_n t_n \}$ is bounded which may happen if $c=0$. Then up to extraction of a sub-sequence, one has as $n \to +\infty$ that
$$c_n t_n \to x_\infty \in \R,$$
and, due to the strong maximum principle,
$$
\lim_{n\to +\infty}(\zeta \hat u+\xi \hat v)(t_n +t,c_n t_n + x)=0\text{ locally uniformly for $(t,x)\in\R^2$}.
$$
This implies in particular that $(\zeta \hat u+\xi \hat v)(t_n,0)=(\zeta u+ \xi v)(t_n,c_2t_n) \to 0$ as $n \to +\infty$, which contradicts \eqref{first} with $c=c_2$.
As a consequence $c>0$, the sequence $\{c_nt_n\}$ has no bounded sub-sequence and we can assume below that $t_n'\to +\infty$ as $n\to +\infty$.

Now due to \eqref{first} we have for all large $n$ that
$$(\zeta \hat u+\xi \hat v) (t_n ', c_nt_n) = (\zeta\hat u+\xi \hat v) (t_n ', c' t_n ')=(\zeta u+\xi v) (t_n ', (c_2+c') t_n ')> \frac{3}{4}\varepsilon_1( c_2+c').$$
Next, we introduce a third time sequence $\{ t''_n \}$ by
$$ t''_n := \inf \left\{ t \leq  t_n \, | \ \forall s \in (t , t_n), \, (\zeta \hat u+\xi \hat v) (s, c_n t_n)  \leq \frac{\min \{\varepsilon(c_2), \varepsilon_1(c_2+c')\}}{2} \right\} \in (t' _n, t_n).$$
Since $(\zeta \hat u+\xi \hat v)(t_n , c_n t_n) \to 0$ as $n \to +\infty$, we get
$$(\zeta \hat u+\xi \hat v) (t_n '' , c_n t_n) = \frac{\min \{\varepsilon(c_2), \varepsilon_1( c_2+c')\}}{2},$$
and, as before, by a limiting argument and a strong maximum principle, that
$$t_n - t_n '' \to +\infty$$
as $n \to +\infty$.

Then, by parabolic estimates and up to extraction of a sub-sequence, we find that $(\hat u,\hat v,\hat w)(t + t_n '' ,x + c_nt_n - c_2 t)$ converges to a solution $(u_\infty, v_\infty , w_\infty)$ such that
\begin{equation}\label{pm}
\begin{split}
&(\zeta u_\infty+\xi v_\infty)(0,0)>0,\\
&(\zeta u_\infty+\xi v_\infty)(t,c_2 t)\leq  \frac{\min \{\varepsilon(c_2), \varepsilon_1( c_2+c')\}}{2},\;\forall t\geq 0.
\end{split}
\end{equation}
Note  also that $(\hat u,\hat v,\hat w)(t + t_n '' ,x + c_nt_n - c_2 t)=(u,v,w)(t+t_n'', x+c_nt_n+c_2t_n'')$. Hence, since $0\leq c_nt_n=c't_n'\leq c't_n''\leq (c_1-c_2)t_n''$, one gets
$$
x_n:=c_nt_n+c_2t_n''\in [ c_2 t_n'', c_1 t_n'' ],
$$
so that
$(u_\infty,v_\infty,w_\infty)(0,\cdot)\in \omega_0 (c_2,c_1)$ and \eqref{pm} contradicts~\eqref{LE-hyp}. This completes the proof of \eqref{inf}.

\noindent{\bf Third step.} Finally we deal with \eqref{inf_w} and the $w$ component. We again proceed by contradiction.
Let $c\in [c_2,c_1)$ be given and assume that $w (t_n, x_n) \to 1$ for some sequences $t_n \to +\infty$ and $c_2t_n \leq x_n \leq c t_n$.
Then, up to extraction of a sub-sequence, $(u,v,w) (t+t_n , x + x_n)$ converges to an entire in time (i.e., for all $t \in \R$) solution $(u_\infty, v_\infty, w_\infty)$ of \eqref{u-eq}-\eqref{w-eq}, which also satisfies
$$w_\infty (0,0) = 1 .$$
It follows from the strong maximum principle that $w_\infty \equiv 1$. However, according to the above we have $\zeta u_\infty + \xi v_\infty >0$, hence either $u_\infty >0$ or $v_\infty>0$.
This is a contradiction to \eqref{w-eq} and the lemma is proved.
\end{proof}

The above lemma can be strengthened as follows in the coupling case $\mu>0$.
\begin{corollary}\label{COR-weak-strong}
Assume that $\mu>0$ and $(u_0,v_0,w_0)\in X_0$. Let $c_1 > 0 $ be given. 
Assume also that for any $c\in [0,c_1)$ there exists $\varepsilon(c)>0$ such that for any $(\tilde u_0,\tilde v_0,\tilde w_0)\in \omega_0(0,c_1)$ with $\tilde u_0+\tilde v_0\not\equiv 0$,
the corresponding solution $(\tilde u, \tilde v, \tilde w)$ satisfies
\begin{equation*}
\limsup_{t\to +\infty}\,\left( \tilde u(t,ct)+\tilde v(t,ct)\right) \geq \varepsilon(c).
\end{equation*}
Then for any $c\in [0,c_1)$ the solution $(u,v,w)$ satisfies
\begin{equation}\label{inf-uv}
\liminf_{t\to +\infty}\inf_{0\leq x\leq c t}\min\left\{u(t,x), v(t,x), 1 - w(t,x) \right\}>0.
\end{equation}
\end{corollary}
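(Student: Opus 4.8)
The plan is to deduce the statement from Lemma~\ref{LE-weak-strong} applied with $c_2=0$ and $\zeta=\xi=1$, and then to run a short bootstrap argument that uses the strict positivity of $\mu$ to pass from a lower bound on $u+v$ to lower bounds on $u$ and $v$ separately.

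First I would observe that the hypothesis of the corollary is exactly hypothesis~\eqref{LE-hyp} of Lemma~\ref{LE-weak-strong} in the case $c_2=0$, $\zeta=\xi=1$ (since then $\zeta\tilde u_0+\xi\tilde v_0\equiv \tilde u_0+\tilde v_0$). Hence Lemma~\ref{LE-weak-strong} yields, for every $c\in[0,c_1)$,
$$
m(c):=\liminf_{t\to+\infty}\inf_{0\le x\le ct}\bigl(u(t,x)+v(t,x)\bigr)>0,\qquad \limsup_{t\to+\infty}\sup_{0\le x\le ct}w(t,x)<1 .
$$
The second inequality already provides the $1-w$ part of~\eqref{inf-uv}, so it remains to bound $u$ and $v$ separately from below on $\{0\le x\le ct\}$.

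For this, fix $c\in[0,c_1)$ and argue by contradiction, say assuming $\liminf_{t\to+\infty}\inf_{0\le x\le ct}u(t,x)=0$. Then there are sequences $t_n\to+\infty$ and $x_n\in[0,ct_n]$ with $u(t_n,x_n)\to 0$. By the a priori bounds of Proposition~\ref{prop:prelim_estim} and interior parabolic estimates, up to a subsequence $(u,v,w)(\cdot+t_n,\cdot+x_n)$ converges locally uniformly to an entire solution $(u_\infty,v_\infty,w_\infty)$ of~\eqref{u-eq}-\eqref{w-eq} with $u_\infty(0,0)=0$. Rewriting the $u$-equation for $u_\infty$ as the linear parabolic inequality
$$
\partial_t u_\infty-d_1\partial_{xx}u_\infty-\bigl(F(u_\infty,v_\infty,w_\infty)-\mu\bigr)u_\infty=\mu v_\infty\ge 0 ,
$$
with bounded zeroth-order coefficient, the strong maximum principle forces $u_\infty\equiv 0$ on $\R\times(-\infty,0]$; plugging this back into the same equation and using $\mu>0$ gives $v_\infty\equiv 0$ on $\R\times(-\infty,0]$ as well. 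On the other hand, evaluating $m(c)$ along the points $(t_n,x_n)\in\{(t,x):0\le x\le ct\}$ gives $(u_\infty+v_\infty)(0,0)=\lim_n(u+v)(t_n,x_n)\ge m(c)>0$, a contradiction. The same reasoning applied to the $v$-equation (where $\mu u_\infty\ge 0$ now plays the role of the nonnegative source) rules out $\liminf_{t\to+\infty}\inf_{0\le x\le ct}v(t,x)=0$, and~\eqref{inf-uv} follows.

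I expect no serious obstacle here: the argument is essentially a routine corollary of Lemma~\ref{LE-weak-strong}. The only point that needs care is the application of the strong maximum principle to the subsystem near $\{u=0\}$ (resp. $\{v=0\}$), together with the observation that it is precisely the coupling $\mu>0$ that lets the vanishing of one predator component on the whole past half-space propagate to the other component, thereby contradicting the lower bound on $u+v$.
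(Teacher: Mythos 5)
Your argument is correct and is essentially the paper's own: apply Lemma~\ref{LE-weak-strong} with $c_2=0$, $\zeta=\xi=1$ to control $u+v$ and $1-w$, then upgrade to separate lower bounds on $u$ and $v$ by passing to an entire limit solution along a hypothetical vanishing sequence and invoking the strong maximum principle together with the coupling $\mu>0$. The only difference is cosmetic: you spell out the strong-maximum-principle step (viewing $\mu v_\infty\ge 0$ as a nonnegative source) that the paper merely references back to the third step of Lemma~\ref{LE-weak-strong}.
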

The proof of this result follows from a straightforward limiting argument together with the strong maximum principle.
 Indeed, for any $c\in[0,c_1)$, from Lemma~\ref{LE-weak-strong} it follows that
\begin{equation}\label{inf-uv0}
\liminf_{t\to +\infty}\inf_{0\leq x\leq c t}\min\left\{u(t,x)+v(t,x), 1 - w(t,x) \right\}>0.
\end{equation}
Next, as in the third step of the proof of Lemma~\ref{LE-weak-strong}, if there exist sequences $t_n \to +\infty$ and $0 \leq x_n \leq c t_n$ such that $u (t_n,x_n) \to 0$ as $n \to +\infty$,
then $(u,v,w) (t+t_n,x+x_n)$ converges to an entire in time solution $(0,v_\infty,w_\infty)$. From~\eqref{inf-uv0} we must have $v_\infty >0$, which contradicts~\eqref{u-eq}. It follows that
$$\liminf_{t \to +\infty} \inf_{0 \leq x \leq ct } u(t,x) >0 .$$
Proceeding similarly for the other predator, one reaches the conclusion \eqref{inf-uv}.

\begin{remark}
The same results as in Lemma \ref{LE-weak-strong} and Corollary \ref{COR-weak-strong} also hold for negative speeds with straightforward adaptations.
\end{remark}

\subsection{Some eigenvalue problems}

We conclude this section by a computational lemma that shall be used in the sequel in particular to check the assumptions of Lemma~\ref{LE-weak-strong} and Corollary \ref{COR-weak-strong}.
\begin{lemma}\label{LE-eig}
Let $d>0$, $c\in\R$, $a\in\R$ and $R>0$ be given. Then the principal eigenvalue $\lambda_R$ of the following Dirichlet elliptic problem
\begin{equation*}
\begin{cases}
-d\varphi''(x)- c\varphi'(x)+a\varphi(x)=\lambda_R\varphi(x),\; \mbox{ for }x \in (-R,R), \vspace{3pt}\\
\varphi(\pm R)=0\text{ and }\varphi>0\text{ on }(-R,R),
\end{cases}
\end{equation*}
is given by
\begin{equation*}
\lambda_R=a+\frac{c^2}{4d}+\frac{d\pi^2}{4R^2} .
\end{equation*}
\end{lemma}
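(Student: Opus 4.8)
The plan is to find the principal eigenpair explicitly by removing the first-order term via an exponential change of variables, reducing the problem to the classical Dirichlet eigenvalue problem for the Laplacian on an interval.

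First I would set $\varphi(x) = e^{-cx/(2d)} \psi(x)$ and substitute into the equation $-d\varphi'' - c\varphi' + a\varphi = \lambda_R \varphi$. Computing $\varphi' = e^{-cx/(2d)}(\psi' - \frac{c}{2d}\psi)$ and $\varphi'' = e^{-cx/(2d)}(\psi'' - \frac{c}{d}\psi' + \frac{c^2}{4d^2}\psi)$, the first-order terms cancel and one obtains, after dividing through by $e^{-cx/(2d)}$,
\begin{equation*}
-d\psi''(x) + \left(a - \frac{c^2}{4d}\right)\psi(x) = \lambda_R \psi(x), \quad x \in (-R,R).
\end{equation*}
Since the exponential factor never vanishes, the boundary conditions $\varphi(\pm R) = 0$ are equivalent to $\psi(\pm R) = 0$, and positivity of $\varphi$ on $(-R,R)$ is equivalent to positivity of $\psi$ there. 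Hence $\psi$ is (a multiple of) the principal Dirichlet eigenfunction of $-d\psi''$ on $(-R,R)$, namely $\psi(x) = \cos\!\big(\frac{\pi x}{2R}\big)$, which is positive on $(-R,R)$ and vanishes at $\pm R$. This gives $-d\psi'' = d\frac{\pi^2}{4R^2}\psi$, so that
\begin{equation*}
\lambda_R = a - \frac{c^2}{4d} + \frac{d\pi^2}{4R^2},
\end{equation*}
which is the claimed formula once one notes that my substitution shifts the sign: I would instead take $\varphi(x) = e^{+cx/(2d)}\psi(x)$ (or equivalently track the sign carefully), so that the transformed equation reads $-d\psi'' + (a + \frac{c^2}{4d})\psi = \lambda_R\psi$ — wait, let me be careful here. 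The correct bookkeeping is the following: with $\varphi = e^{\alpha x}\psi$ one has $-d\varphi'' - c\varphi' = e^{\alpha x}[-d\psi'' - (2d\alpha + c)\psi' - (d\alpha^2 + c\alpha)\psi]$; choosing $\alpha = -c/(2d)$ kills the $\psi'$ term and leaves $-d\alpha^2 - c\alpha = -\frac{c^2}{4d} + \frac{c^2}{2d} = \frac{c^2}{4d}$, so the zeroth-order coefficient becomes $a + \frac{c^2}{4d}$, giving exactly $\lambda_R = a + \frac{c^2}{4d} + \frac{d\pi^2}{4R^2}$.

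The only genuine content is the sign computation above; there is no real obstacle. To make the argument fully rigorous I would either (i) invoke uniqueness of the principal eigenvalue (the one with a positive eigenfunction) for the transformed self-adjoint operator $-d\frac{d^2}{dx^2}$ on $H^1_0(-R,R)$, whose spectrum is the classical $\{d\frac{n^2\pi^2}{4R^2}\}_{n\ge 1}$ with the smallest eigenvalue simple and having a positive eigenfunction; or (ii) simply exhibit the eigenpair $\big(a + \frac{c^2}{4d} + \frac{d\pi^2}{4R^2},\ e^{-cx/(2d)}\cos(\frac{\pi x}{2R})\big)$, verify it solves the stated problem including positivity, and appeal to the fact that an eigenvalue admitting a positive eigenfunction for such an elliptic operator must be the principal one (e.g.\ by the Krein–Rutman theorem or a direct Sturm–Liouville argument). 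I would write out option (ii) as it is self-contained and short.
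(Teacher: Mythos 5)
Your proof is correct, and since the paper explicitly omits the argument as ``classical,'' there is no competing proof to compare against: the exponential substitution $\varphi = e^{-cx/(2d)}\psi$ reducing to the symmetric Dirichlet problem $-d\psi'' = (\lambda_R - a - \tfrac{c^2}{4d})\psi$ on $(-R,R)$, with principal eigenpair $\bigl(d\pi^2/(4R^2),\, \cos(\pi x/(2R))\bigr)$, is exactly the standard route and yields $\lambda_R = a + \tfrac{c^2}{4d} + \tfrac{d\pi^2}{4R^2}$. The only blemish is the visible false start on the sign of the zeroth-order coefficient; you do self-correct, and the final ``bookkeeping'' computation ($-d\alpha^2 - c\alpha = \tfrac{c^2}{4d}$ with $\alpha = -c/(2d)$) is the one to keep. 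In a clean write-up, delete the digression and present option (ii) as you suggest: exhibit the eigenpair $\bigl(a + \tfrac{c^2}{4d} + \tfrac{d\pi^2}{4R^2},\ e^{-cx/(2d)}\cos(\tfrac{\pi x}{2R})\bigr)$, verify it solves the boundary-value problem with positivity in $(-R,R)$, and invoke uniqueness of the principal eigenvalue (simplicity plus the sign property of its eigenfunction, e.g.\ by Sturm--Liouville or Krein--Rutman) to conclude.
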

This is classical and we omit the proof.

In the `two mutants' case, we need an analogous result for a (cooperative) system as follows. 
\begin{proposition}\label{prop:linear_instab_mutant}
Let $\mu>0$, $c \in \R$, $\delta\geq 0$ and $R >0$ be given. Consider the following principal eigenvalue problem:
\begin{equation*}
\left\{
\begin{array}{l}
-d_1 \varphi_{xx} - c \varphi_x - r_1 \varphi  (a-1 -2 \delta )  - \mu (\psi - \varphi) = \Lambda_R \varphi, \ \ \mbox{ for } x \in (-R,R), \vspace{3pt}\\
- d_2 \psi_{xx} - c \psi_x - r_2 \psi (a-1 - 2 \frac{r_1}{r_2}\delta  )  - \mu (\varphi - \psi )= \Lambda_R \psi , \ \ \mbox{ for } x \in (-R,R) , \vspace{3pt}\\
\varphi (\pm R) = \psi (\pm R) = 0  \text{ and } \varphi ,\psi >0 \text{ on } (-R,R).
\end{array}
\right.
\end{equation*}
Then there exists a unique $\Lambda_R(c,\delta)$ such that this eigenvalue problem admits an eigenfunction pair $(\varphi, \psi)$ with $\varphi, \psi >0$ in $(-R,R)$.
It satisfies $\Lambda_R(c,\delta)=\Lambda_R(c,0) + 2r_1\delta$ and $\Lambda_R(-c,0)=\Lambda_R(c,0)$.
Moreover, for each $|c|<c^*_\mu$, there exist $\delta_0>0$ and $R_0$ such that
\begin{equation*}
\Lambda_R(c,\delta) < 0,\;\forall \delta\in [0,\delta_0],\;\forall R\geq R_0.
\end{equation*}
\end{proposition}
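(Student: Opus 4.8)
The plan is to exploit the fact that the zeroth-order terms in the two equations differ from those of the eigenvalue problem at $\delta=0$ only by the diagonal perturbations $2r_1\delta\,\varphi$ and $2r_1\delta\,\psi$ (note $r_2\cdot 2\tfrac{r_1}{r_2}\delta = 2r_1\delta$, so the \emph{same} constant $2r_1\delta$ is subtracted from the diagonal in both equations). Hence if $(\varphi,\psi)$ is a principal eigenpair for the $\delta=0$ problem with eigenvalue $\Lambda_R(c,0)$, the very same $(\varphi,\psi)$ solves the $\delta$-problem with eigenvalue $\Lambda_R(c,0)+2r_1\delta$; conversely the positivity of the eigenfunction pins down the eigenvalue uniquely by the Krein--Rutman / Perron--Frobenius theory for cooperative elliptic systems (the off-diagonal coupling coefficients $\mu>0$ make the system fully cooperative and irreducible on $(-R,R)$). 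This simultaneously gives existence, uniqueness, and the identity $\Lambda_R(c,\delta)=\Lambda_R(c,0)+2r_1\delta$. The symmetry $\Lambda_R(-c,0)=\Lambda_R(c,0)$ follows by the change of variables $x\mapsto -x$, which turns a principal eigenpair for speed $c$ into one for speed $-c$ on the same interval $(-R,R)$, with the same eigenvalue.

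For the final, quantitative assertion, I would first identify $\Lambda_R(c,0)$ in the limit $R\to+\infty$. On the whole line the system $-d_1\varphi_{xx}-c\varphi_x - r_1(a-1)\varphi - \mu(\psi-\varphi)=\Lambda\varphi$, and analogously for $\psi$, admits exponential solutions $(\varphi,\psi)=(p,q)e^{-\gamma x}$ precisely when $-\Lambda$ is an eigenvalue of the matrix $M[\mu,\gamma]$ shifted by $c\gamma$; more precisely, using the matrix $M[\mu,\gamma]$ from~\eqref{matrix_mutant}, the relevant generalized principal eigenvalue of the whole-line operator is
\begin{equation*}
\Lambda_\infty(c):=-\max_{\gamma>0}\bigl(\Lambda[\mu,\gamma]-c\gamma\bigr)= -\max_{\gamma>0}\gamma\Bigl(\frac{\Lambda[\mu,\gamma]}{\gamma}-c\Bigr).
\end{equation*}
When $|c|<c^*_\mu=\min_{\gamma>0}\Lambda[\mu,\gamma]/\gamma$, every term $\Lambda[\mu,\gamma]/\gamma - c$ is strictly positive, so $\max_{\gamma>0}(\Lambda[\mu,\gamma]-c\gamma)>0$ and therefore $\Lambda_\infty(c)<0$. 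The standard monotonicity of Dirichlet principal eigenvalues in the domain, together with the convergence $\Lambda_R(c,0)\to\Lambda_\infty(c)$ as $R\to+\infty$ (a Rayleigh-quotient / barrier argument for cooperative systems, exactly as in the scalar Lemma~\ref{LE-eig} where the correction is the explicit $d\pi^2/4R^2$), then yields $\Lambda_R(c,0)<0$ for all $R$ large, say $R\ge R_0$. Finally choosing $\delta_0>0$ small enough that $\Lambda_R(c,0)+2r_1\delta_0<0$ for $R=R_0$ — and hence, by domain monotonicity, for all $R\ge R_0$ — gives $\Lambda_R(c,\delta)=\Lambda_R(c,0)+2r_1\delta<0$ for all $\delta\in[0,\delta_0]$ and all $R\ge R_0$, as claimed.

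The routine parts are the Krein--Rutman machinery for the cooperative system and the reflection symmetry; the one point deserving genuine care is the convergence $\Lambda_R(c,0)\to\Lambda_\infty(c)$ as $R\to\infty$ and the identification of the limit with $-\max_{\gamma>0}(\Lambda[\mu,\gamma]-c\gamma)$. I expect this to be the main obstacle: one must construct suitable super- and sub-solutions on $(-R,R)$ built from the exponential profiles $(p,q)e^{-\gamma x}$ (and their reflections), truncated and corrected near $\pm R$, to sandwich $\Lambda_R(c,0)$ between $\Lambda_\infty(c)+o(1)$ and $\Lambda_\infty(c)$. Since the statement only needs \emph{strict} negativity for large $R$, it suffices in fact to produce, for each $|c|<c^*_\mu$, a single positive strict supersolution pair on a large enough bounded interval with strictly negative associated value — e.g. by multiplying the whole-line exponential eigenfunction by a slowly varying cutoff and absorbing the error into the gap coming from $\max_{\gamma>0}(\Lambda[\mu,\gamma]-c\gamma)>0$ — which then forces $\Lambda_R(c,0)<0$ by the characterization of the principal eigenvalue via supersolutions. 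I would carry out the argument in that one-sided form to keep the computation light.
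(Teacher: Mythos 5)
Your overall strategy matches the paper's proof closely: Krein--Rutman gives existence and uniqueness of the positive principal eigenpair; the reflection $x\mapsto -x$ gives the symmetry $\Lambda_R(-c,0)=\Lambda_R(c,0)$; the observation that $r_2\cdot 2\tfrac{r_1}{r_2}\delta=2r_1\delta$ shifts both diagonal zeroth-order coefficients by the same constant directly yields $\Lambda_R(c,\delta)=\Lambda_R(c,0)+2r_1\delta$; and for the negativity you pass to the $R\to\infty$ limit and identify the limiting value via the Perron--Frobenius matrix $M[\mu,\gamma]$. The paper does exactly this, invoking Girardin~\cite{Girardin18} (Theorem~4.2 and Lemma~6.4) for the convergence $\Lambda_R\to\Lambda_\infty$ and the identification of $\Lambda_\infty$; your suggestion of producing a single truncated exponential supersolution rather than proving full convergence is a reasonable, slightly more self-contained variant of the same idea, and your way of getting uniformity in $\delta$ (via the exact additive identity plus domain monotonicity in $R$) is cleaner than the paper's.

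There is, however, a genuine sign/min--max error in the identification of the whole-line eigenvalue. You write
$\Lambda_\infty(c):=-\max_{\gamma>0}\bigl(\Lambda[\mu,\gamma]-c\gamma\bigr)$.
Since $\Lambda[\mu,\gamma]$ grows quadratically in $\gamma$, the quantity $\max_{\gamma>0}(\Lambda[\mu,\gamma]-c\gamma)$ is $+\infty$, so this formula is vacuous. The correct identification (and the one the paper uses, quoting~\cite{Girardin18}) is
\begin{equation*}
\Lambda_\infty(c)\;=\;\max_{\gamma\ge 0}\bigl(c\gamma-\Lambda[\mu,\gamma]\bigr)\;=\;-\min_{\gamma\ge 0}\bigl(\Lambda[\mu,\gamma]-c\gamma\bigr),
\end{equation*}
i.e.\ a $\min$ where you wrote a $\max$. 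Concretely, plugging $(\varphi,\psi)=(p,q)e^{-\gamma x}$ with $(p,q)$ the Perron eigenvector of $M[\mu,\gamma]$ into the whole-line operator yields the exact eigenvalue $c\gamma-\Lambda[\mu,\gamma]$, and the generalized principal eigenvalue is the supremum of these over $\gamma$. Your subsequent sentence (``every term $\Lambda[\mu,\gamma]/\gamma-c$ is strictly positive, so \dots $>0$'') does establish the right fact once rewritten with $\min$: when $|c|<c^*_\mu$ one has $\Lambda[\mu,\gamma]-c\gamma>0$ for every $\gamma>0$, and since this convex function equals $\Lambda[\mu,0]>0$ at $\gamma=0$ and tends to $+\infty$ as $\gamma\to\infty$, its infimum over $\gamma\ge 0$ is attained and strictly positive, hence $\Lambda_\infty(c)<0$. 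After this correction the rest of your argument goes through as written.
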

Since this result was proved in~\cite{Girardin18}, we only give a brief sketch of the argument.
\begin{proof}
First, by the classical Krein-Rutman theory, there exists a unique $\Lambda_R(c,\delta)$ such that this eigenvalue problem admits a positive eigenfunction pair.
The symmetry with respect to $c$ follows from the uniqueness of this eigenvalue and by changing $x$ to $-x$ into the system.

We consider now $0\leq c<c^*_\mu$.
As $R \to +\infty$, $\Lambda_R(c,\delta)$ converges to a generalized principal eigenvalue of the operator
\begin{equation*}
\mathbf{A} [\varphi, \psi ] :=
\left(
\begin{array}{l}
-d_1 \varphi_{xx} -c \varphi_x - r_1 \varphi  (a-1-2\delta) -  \mu (\psi - \varphi)\\
- d_2 \psi_{xx} - c \psi_x - r_2 \psi (a-1-2\frac{r_1}{r_2}\delta)  - \mu (\varphi - \psi )
\end{array}
\right);
\end{equation*}
see~\cite[Theorem 4.2]{Girardin18}. More precisely, this generalized principal eigenvalue is defined as
$$\sup \, \{ \lambda \in \R \, | \ \exists (\varphi,\psi) \in C^2 (\R, \R_+^* \times \R_+^*) , \quad  \mathbf{A} [\varphi, \psi] \geq \lambda [\varphi ,\psi] \},$$
where the inequality is to be understood componentwise. It also turns out (see~\cite[Lemma 6.4]{Girardin18}) that this generalized principal eigenvalue coincides with
the maximum of the function $$\gamma \in [0, +\infty) \mapsto - \Lambda [\mu,\gamma] + 2 r_1 \delta + c\gamma ,$$
where $\Lambda [\mu,\gamma]$ is the Perron-Frobenius eigenvalue of the matrix defined by~\eqref{matrix_mutant}. In other words, one has uniformly with respect to $\delta$ that
$$\lim_{R \to +\infty} \Lambda_R(c,\delta) = \max_{\gamma \geq 0}  \left\{ -\Lambda [\mu,\gamma] + c\gamma \right\}  + 2r_1\delta.$$
Recalling that $\Lambda [\mu,0] >0$,
$$ c_\mu^* := \min_{\gamma >0} \frac{ \Lambda [\mu,\gamma] }{\gamma} >0 ,$$
and $0\leq c<c^*_\mu$, one gets the existence of $R_0>0$ large enough and $\delta_0>0$ small enough such that for all $R\geq R_0$ and $\delta\in [0,\delta_0]$ one has
$$
\Lambda_R(c,\delta) < 0.
$$
The proposition is proved.
\end{proof}

We point out that it follows from this proposition that solutions of the sub-system
\begin{equation*}
\left\{
\begin{array}{l}
u_t = d_1 u_{xx} + r_1 u (a-1 -  u -k v) + \mu (v-u), \vspace{3pt}\\
v_t = d_2 v_{xx} + r_2 v (a-1 -  h u - v) + \mu (u-v),
\end{array}
\right.
\end{equation*}
which arises from taking $w \equiv 1$ in \eqref{u-eq}-\eqref{v-eq}, spread with speed $c^*_\mu$ when $0 < \mu \leq (a-1) \min \{r_1,r_2\}/2$.
%
We refer to Girardin~\cite{Girardin18} for a proof of this result, in a more general framework including an arbitrary number of mutants.
This relies on the fact that this sub-system roughly has a cooperative structure around the trivial steady state $(0,0)$, i.e., the linearized system around $(0,0)$ is cooperative, since competitive terms are nonlinear.
As a matter of fact, we believe that our analysis for the predator-prey system could also work when we increase the number of mutants.
As we shall see later, our proof relies on a similar construction of subsolutions as in~\cite{Girardin18}, hence on the above proposition.

\section{Uniformly positive entire solutions}\label{sec:lyapunov}

In this section, we state some Liouville type results on the stationary solutions of both the full `two competitors' system and its sub-systems with only one predator.
This relies on a Lyapunov approach and it shall allow us to describe the shape of the solutions behind the propagation fronts. Throughout this section we shall assume that
$$\mu = 0.$$

\subsection{Two-dimensional sub-systems}

We start with an important lemma on bounded and uniformly positive entire solutions of the sub-system
\begin{equation*}
\begin{cases}
u_t -d_1 u_{xx}=u F(u,0,w) ,\\
w_t-d_3 w_{xx}=w H(u,0,w),
\end{cases} t\in\R,\;x\in\R.
\end{equation*}
Our result reads as follows.

\begin{lemma}\label{LE_entire}
Let $(u,w)=(u,w)(t,x)$ be an entire solution of the above system such that there exist constants $M>m>0$ with
$$
m\leq u(t,x)\leq M,\;m\leq w (t,x)\leq M,\;\forall (t,x)\in\R^2.
$$
Then $(u,w)\equiv \left(\tilde p,\tilde q\right)$, where the stationary state $\left(\tilde p,\tilde q\right)$ is defined in \eqref{def-pq}.
\end{lemma}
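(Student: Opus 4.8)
The plan is to use a Lyapunov functional adapted to the two-species predator--prey sub-system, exploiting the fact that $\left(\tilde p,\tilde q\right)$ is the unique positive equilibrium and that trajectories are trapped in a compact region bounded away from the coordinate axes. First I would recall that, by the uniform bounds $m\le u,w\le M$ together with interior parabolic estimates, all relevant space-time derivatives of $(u,w)$ are bounded on $\R^2$, so that the orbit closure of $(u,w)$ in the locally uniform topology is compact and consists of entire solutions satisfying the same bounds. Thus it suffices to prove that the only entire solution is the constant one, and standard LaSalle-type arguments will then apply.

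The core is the choice of Lyapunov density. Writing the system as $u_t = d_1 u_{xx} + r_1 u(-1-u+aw)$ and $w_t = d_3 w_{xx} + r_3 w(1-bu-w)$, I would look for positive weights $\alpha,\beta>0$ so that
\begin{equation*}
E(t) := \int_{\R} \left[ \alpha \left( u - \tilde p - \tilde p \ln \frac{u}{\tilde p} \right) + \beta \left( w - \tilde q - \tilde q \ln \frac{w}{\tilde q} \right) \right] \rho(x)\, dx
\end{equation*}
(or rather its formal spatial density, since the integral over $\R$ need not converge) is nonincreasing along the flow. Differentiating the Volterra-type density $\phi_c(s) = s - c - c\ln(s/c)$ one gets $\phi_c'(s) = 1 - c/s$, and a direct computation shows that the reaction contribution to $\frac{d}{dt}$ of the density is, up to the weights, a quadratic form in $(u-\tilde p, w-\tilde q)$ whose cross terms cancel precisely when $\alpha a r_1 = \beta b r_3$ (the classical Lotka--Volterra balance), leaving $-\alpha r_1 (u-\tilde p)^2 - \beta r_3 (w-\tilde q)^2 \le 0$; the diffusion contribution, after integrating by parts, contributes $-d_1 \alpha \tilde p \, u_x^2/u^2 - d_3 \beta \tilde q \, w_x^2/w^2 \le 0$. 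Since the problem is posed on all of $\R$ rather than a bounded or periodic domain, I would make this rigorous not by integrating over $\R$ but by a standard localization: either work with the spatially averaged functional $E_L(t) = \frac{1}{2L}\int_{-L}^{L}(\dots)\,dx$ and let $L\to\infty$ after controlling boundary flux terms by the uniform gradient bounds, or—cleaner—pass to the orbit closure, extract a limiting entire solution attaining the infimum of a suitable bounded functional, and deduce from the strict sign of the dissipation that this limit has $u_x \equiv 0$, $w_x\equiv 0$, $u\equiv\tilde p$, $w\equiv\tilde q$; then a connectedness/invariance argument propagates this back to the original solution.

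The step I expect to be the main obstacle is exactly this passage from the formal monotonicity of the Lyapunov density to a genuine conclusion on $\R$: because the total mass is infinite, one cannot simply say ``$E$ is bounded below and nonincreasing, hence $\dot E\to 0$.'' I would handle it by a compactness-in-moving-frames argument in the spirit of the $\omega$-limit set techniques already used in Lemma~\ref{LE-weak-strong}: consider the family of shifts $(u,w)(t+t_n,\cdot)$ along a time sequence realizing the infimum of the localized functional, extract a limit $(u_\infty,w_\infty)$ which is entire and for which the localized dissipation must vanish identically, conclude $u_\infty\equiv\tilde p$ and $w_\infty\equiv\tilde q$ by the strict negativity of the quadratic form and of the gradient terms, and finally upgrade to the original solution using the strong maximum principle together with the uniform lower bounds $u,w\ge m>0$. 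The remaining verifications—positivity and explicit values of $\alpha,\beta$, that $\left(\tilde p,\tilde q\right)$ given by \eqref{def-pq} is indeed the unique interior zero of the kinetics, and the boundary-flux estimates—are routine given the a priori bounds.
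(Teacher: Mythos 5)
Your Lyapunov density is the right one, and the cancellation condition $\alpha a r_1 = \beta b r_3$ matches what the paper effectively uses (with $\alpha = b r_3$, $\beta = a r_1$). The diffusion terms also dissipate, so formally $L_X V \le 0$. However, your plan for turning this into a conclusion on $\R^2$ — a LaSalle-type compactness argument extracting a limit where the dissipation vanishes — is where there is a genuine gap, and it is exactly the step you flagged as the ``main obstacle.''

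The paper closes this gap with two ingredients that are missing from your sketch. First, it sharpens the pointwise dissipation to a \emph{strict exponential} inequality: since $V(u,w)$ and the quadratic form $(u-\tilde p)^2 + (w-\tilde q)^2$ are comparable on the compact set $[m,M]^2$ (both vanish to second order at $(\tilde p,\tilde q)$ and are positive elsewhere), one gets
$L_X V(u,w) \le -\alpha V(u,w)$ on $[m,M]^2$ for some $\alpha>0$, not merely $L_X V \le 0$. Second, it introduces the cutoff functional $\mathcal E_R(t) = \int_\R \varphi(R^{-1}x)\,V(u,w)\,dx$ and integrates by parts \emph{twice}, so that the diffusion terms contribute only an $O(1/R)$ error plus a nonpositive term coming from $V_i'' \ge 0$; this yields the differential inequality $\frac{d}{dt}\mathcal E_R \le K/R - \alpha \mathcal E_R$. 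Crucially, because the solution is \emph{entire in time} and $\mathcal E_R$ is bounded, one can integrate this from $t=-\infty$ (rather than from a finite initial time) to conclude $\mathcal E_R(t) \le K/(\alpha R)$ for all $t$; letting $R\to\infty$ and using Fatou gives $V\equiv 0$, hence the result. This Gronwall-from-$-\infty$ step is the precise place where the entirety of the solution is exploited, and it replaces any reference to omega-limit sets.

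By contrast, your proposed route has two unresolved points. (a) The localized energy is not monotone in $t$ (there are boundary flux contributions), so selecting a time sequence ``realizing the infimum'' does not by itself force the localized dissipation to vanish along the corresponding limit; you would need an argument that controls these boundary terms uniformly in $t$ as you localize, which is essentially what the paper's double integration by parts and the $O(1/R)$ bookkeeping achieve. (b) Even granting that one translated limit $(u_\infty,w_\infty) \equiv (\tilde p,\tilde q)$, the final ``upgrade to the original solution by strong maximum principle'' does not follow: the limit of a sequence of time-shifts being constant does not imply the original entire solution is constant, and the strong maximum principle requires the original solution itself to touch the constant state somewhere. The paper's Gronwall argument avoids this entirely by giving a uniform-in-$t$ bound on $\mathcal E_R$ directly, with no passage to a subsequential limit. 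So while your Lyapunov function and cancellation are correct, the proof as sketched does not close, and the key quantitative observation $L_X V \le -\alpha V$ together with the backward-in-time Gronwall integration is the missing idea.
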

Before we prove this result, we point out that the roles of $u$ can easily be inverted. In particular, considering
\begin{equation*}
\begin{cases}
v_t-d_2 v_{xx} =v G(0,v,w) , \\
w_t  -d_3 w_{xx} =w H(0,v,w),
\end{cases} t\in\R,\;x\in\R ,
\end{equation*}
we also have:
\begin{lemma}\label{LE_entire_bis}
Let $(v,w)=(v,w)(t,x)$ be an entire solution of the above system such that there exist constants $M>m>0$ with
$$
m\leq v(t,x)\leq M,\;m\leq w(t,x)\leq M,\;\forall (t,x)\in\R^2.
$$
Then $(v,w)\equiv \left(\tilde p,\tilde q\right)$, where the stationary state $\left(\tilde p,\tilde q\right)$ is defined in \eqref{def-pq}.
\end{lemma}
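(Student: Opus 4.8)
The plan is to prove the companion Lemma~\ref{LE_entire_bis} in exactly the same way one proves Lemma~\ref{LE_entire}, by exploiting the fact that the $(v,w)$-subsystem with only one predator is a classical diffusive Lotka--Volterra predator--prey system, for which a well-known Lyapunov functional exists. First I would observe that since the roles of $u$ and $v$ are symmetric in the structure of the kinetics (both predator equations have the same form $-1-(\text{self})-(\text{cross})+aw$ for $F$ and $G$, and $H$ is symmetric in $u,v$), setting $u\equiv 0$ in the full system and setting $v\equiv 0$ both produce a two-component system with the \emph{same} coefficients $r_2,d_2$ replaced by $r_1,d_1$; concretely the system for $(v,w)$ is
\begin{equation*}
\begin{cases}
v_t-d_2 v_{xx}=r_2 v(-1-v+aw),\\
w_t-d_3 w_{xx}=r_3 w(1-bv-w),
\end{cases}
\end{equation*}
whose unique positive equilibrium is again $(\tilde p,\tilde q)$ with $\tilde p=(a-1)/(1+ab)$, $\tilde q=(b+1)/(1+ab)$, because $\tilde p$ and $\tilde q$ do not involve $h$ or $k$. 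Hence the statement to be proved is literally Lemma~\ref{LE_entire} with $(d_1,r_1)$ renamed $(d_2,r_2)$, and the proof is obtained by the identical argument.

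The core step is the Lyapunov argument. For the spatially homogeneous Lotka--Volterra predator--prey ODE the standard first integral / Lyapunov function is, up to positive scaling,
\begin{equation*}
E(v,w):=\frac{b r_2}{r_3}\left(v-\tilde p-\tilde p\ln\frac{v}{\tilde p}\right)+a\left(w-\tilde q-\tilde q\ln\frac{w}{\tilde q}\right),
\end{equation*}
which is nonnegative, strictly convex with a unique minimum at $(\tilde p,\tilde q)$, and satisfies $\dot E\le 0$ along ODE trajectories. For the PDE one integrates the associated density in space against the entire solution and controls the diffusion contribution. The usual device (as in \cite{DGM}) is to work with bounded entire solutions: thanks to the two-sided bound $m\le v,w\le M$ and parabolic estimates, all derivatives are bounded, and one considers the functional $t\mapsto \liminf$ or a space-averaged version of $E(v(t,\cdot),w(t,\cdot))$. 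The diffusion terms, after integration by parts, contribute $-\,d_2\tilde p\int |v_x|^2/v^2 - d_3\,a\tilde q\int |w_x|^2/w^2 \le 0$ (using the logarithmic form of $E$), so the full spatio-temporal functional is monotone; combined with boundedness this forces $v_x\equiv 0$, $w_x\equiv 0$ and then the ODE part forces $(v,w)\equiv(\tilde p,\tilde q)$. I would either invoke a sliding/translation compactness argument to pass from a bounded entire solution to a spatially homogeneous one, or directly differentiate a suitably normalized functional; either way the uniform positivity bound $m>0$ is what makes the logarithms and the functional well defined and coercive.

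The main obstacle is the usual one for Lyapunov arguments on unbounded domains: the spatial integral $\int_{\mathbb R} E(v,w)\,dx$ need not converge, so one cannot naively differentiate it. The standard fix (and the one I expect the authors use) is to truncate with cutoff functions $\chi_L$ supported in $(-L,L)$, estimate the error terms coming from $\chi_L'$ and $\chi_L''$ using the uniform bounds on $v,w$ and their derivatives, then let $L\to\infty$ after dividing by $L$ (a space-averaging / De~Giorgi-type normalization), or alternatively to pass to a limit of space translates $(v(\cdot,\cdot+x_n),w(\cdot,\cdot+x_n))$ and apply a strong maximum principle once $\dot E=0$ is reached on the limit. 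A second, minor point to check is that the dissipation terms genuinely have the claimed sign, which is exactly why the coefficients $br_2/r_3$ and $a$ in front of the two logarithmic blocks are chosen as above — the cross terms $\pm a b r_2 vw$ cancel in $\dot E$, leaving only the manifestly nonpositive diffusion contribution and the ODE dissipation. Once monotonicity and boundedness are in hand, the rigidity conclusion $(v,w)\equiv(\tilde p,\tilde q)$ is immediate, and since this is precisely the mirror of Lemma~\ref{LE_entire}, I would simply write ``the proof is identical to that of Lemma~\ref{LE_entire}, exchanging the roles of $u$ and $v$.''
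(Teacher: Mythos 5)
Your high-level plan is exactly the paper's: the authors prove Lemma~\ref{LE_entire_bis} by observing it is Lemma~\ref{LE_entire} with the roles of $u$ and $v$ exchanged (the equilibrium $(\tilde p,\tilde q)$ depending only on $a,b$), and Lemma~\ref{LE_entire} is established via a Goh-type Lyapunov density built from $g(x)=x-\ln x -1$, truncated in space by a scaled cutoff $\varphi(x/R)$ and sent $R\to+\infty$.

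However, two concrete points in your write-up would fail. First, the weights in your Lyapunov functional do not achieve the cancellation you claim. For the $(v,w)$-subsystem the kinetics are $vG(0,v,w)=r_2 v\bigl[(\tilde p - v)+a(w-\tilde q)\bigr]$ and $wH(0,v,w)=r_3 w\bigl[b(\tilde p - v)+(\tilde q - w)\bigr]$, so with $E=\alpha_1\tilde p\,g(v/\tilde p)+\alpha_2\tilde q\,g(w/\tilde q)$ one computes
\begin{equation*}
L_XE=-\alpha_1 r_2(v-\tilde p)^2+\bigl(a\alpha_1 r_2-b\alpha_2 r_3\bigr)(v-\tilde p)(w-\tilde q)-\alpha_2 r_3(w-\tilde q)^2.
\end{equation*}
The cross term vanishes precisely when $(\alpha_1,\alpha_2)$ is proportional to $(b r_3,\;a r_2)$; your choice $(\alpha_1,\alpha_2)=(b r_2/r_3,\;a)$ gives a cross coefficient $ab(r_2^2/r_3-r_3)$, which is zero only if $r_2=r_3$. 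Second, your closing step is not the right one and would not obviously close the argument. The paper does not deduce $v_x\equiv 0$ from a monotone energy, nor does it divide by $L$; the essential observation is the \emph{strict} estimate $L_XV\le -\alpha V$ on the compact box $[m,M]^2$ (not merely $L_XV\le 0$). With the scaled cutoff $\varphi(x/R)$ the cutoff error is $O(1/R)$, because $\varphi''(x/R)$ carries $R^{-2}$ while the support has measure $O(R)$, and one gets the linear differential inequality $\frac{d}{dt}\mathcal E_R(t)\le K/R-\alpha\mathcal E_R(t)$ valid for all $t\in\R$. Integrating this backwards (this is exactly where entireness in time is used) yields $\mathcal E_R(t)\le K/(\alpha R)$ uniformly in $t\in\R$, and letting $R\to\infty$ together with Fatou's lemma gives $V(v,w)\equiv 0$, hence $(v,w)\equiv(\tilde p,\tilde q)$ directly, without ever proving $v_x\equiv 0$.
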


\begin{proof}
Since both results are actually the same up to renaming parameters, we only prove Lemma~\ref{LE_entire}.
In order to prove this lemma we make use of a Lyapunov like argument, inspired from \cite{DH04,DG18}.

Define the function $g(x)=x-\ln(x)- 1$. Recalling the definition of $\left(\tilde p,\tilde q\right)$ in \eqref{def-pq} one has
$$
F(u,0,w)=r_1[\tilde p-u+a(w-\tilde q)],\;\;H(u,0,w)=r_2[b(\tilde p-u)+(\tilde q-w)].
$$
Consider also the function $V=V(u,w)$ given by
\begin{equation*}
V(u,w)=br_2\tilde p g\left(\frac{u}{\tilde p}\right)+ar_1\tilde q g\left(\frac{w}{\tilde q}\right):=V_1(u)+V_2(w).
\end{equation*}
Note that the Lie Derivative of $V$ along $X=(uF(u,0,w), wH(u,0,w))$, denoted by $L_X V$, is given by
\begin{equation*}
\begin{split}
L_X V(u,w)&=V_u(u,w) uF(u,0,w)+V_w(u,w)w H(u,0,w)\\
&=br_1 r_2 \left(u-\tilde p\right)[\tilde p-u+a(w-\tilde q)]+ar_1r_2(w-\tilde q)[b(\tilde p-u)+(\tilde q-w)]\\
&=-br_1 r_2 \left(u-\tilde p\right)^2-ar_1r_2(w-\tilde q)^2.
\end{split}
\end{equation*}
Now observe that there exists some constant $\alpha>0$  such that
$$
L_X V(u,w)\leq -\alpha V(u,w),\;\forall (u,w)\in [m,M]\times [m,M].
$$

Next, consider a function $\varphi$ smooth, non-negative and such that
$$
\varphi(x)=\begin{cases} 1,\quad\text{ if $x\in [-1,1]$,}\\ 0,\quad\text{ if $|x|\geq 2$.}\end{cases}
$$
Define for $R>0$ the functional
\begin{equation*}
 \mathcal E_R (t) =\int_\R \varphi(R^{-1}x) V(u(t,x),w(t,x))dx.
\end{equation*}
Then one has, for all $t\in\R$,
\begin{equation*}
\frac{d}{dt} \mathcal E_R(t)=\int_{\R} \varphi(R^{-1}x) [V_1'(u)d_1 u_{xx}+V_2'(w)d_3w_{xx}] dx+\int_{\R}\varphi(R^{-1}x)L_X V(u,w)dx ,
\end{equation*}
so that
\begin{equation*}
\begin{split}
\frac{d}{dt} \mathcal E_R(t)=& R^{-2}\int_{\R} \varphi''(R^{-1}x) d_1 V_1(u)dx -d_1\int_{\R} \varphi(R^{-1}x)V_1''(u)(u_x)^2 dx\\
&+R^{-2}\int_{\R} \varphi''(R^{-1}x) d_3 V_2(u) dx-d_3\int_{\R} \varphi(R^{-1}x)V_2''(u)(w_x)^2 dx\\
&+\int_{\R}\varphi(R^{-1}x)L_X V(u,w)dx.
\end{split}
\end{equation*}
Since $V_1''\geq 0$, $V_2''\geq 0$ and $ m \leq u,v \leq M$, we obtain that there exists some constant $K>0$ independent of $R$ such that
\begin{equation*}
\frac{d}{dt} \mathcal E_R(t)\leq \frac{K}{R}-\alpha \mathcal E_R(t),\;\forall t\in\R.
\end{equation*}
This yields
$$
\mathcal E_R(t)\leq \frac{K}{\alpha R},\;\forall t\in\R,\;\forall R>0.
$$
This ensures that $\mathcal E_R(t)\to 0$ as $R\to +\infty$ uniformly for $t\in\R$, and the result follows by applying Fatou's lemma.
\end{proof}

\subsection{The full system}

\begin{lemma}\label{LE-entire-full}
Let $(u,v,w) = (u,v,w) (t,x)$ be a bounded entire solution of \eqref{u-eq}-\eqref{w-eq} such that
\begin{equation}\label{lb}
\min\left(\inf_{(t,x)\in\R^2} u(t,x), \inf_{(t,x)\in\R^2}v(t,x),\inf_{(t,x)\in\R^2}w(t,x)\right) >0.
\end{equation}
Then
$
(u,v,w) \equiv \left(u^*,v^*,w^*\right),
$
where the stationary state $(u^*,v^*,w^*)$ is defined in~\eqref{def-equi}.
\end{lemma}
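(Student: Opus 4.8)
The plan is to mimic the Lyapunov-function argument of Lemma~\ref{LE_entire}, now in three dimensions. Set $g(x)=x-\ln x-1$, so $g\ge 0$ with $g(1)=0$ and $g$ strictly convex. Using the coexistence identities from~\eqref{def-equi}, rewrite the reaction terms centered at $(u^*,v^*,w^*)$:
\[
F(u,v,w)=r_1\big[(u^*-u)+k(v^*-v)+a(w-w^*)\big],
\]
\[
G(u,v,w)=r_2\big[h(u^*-u)+(v^*-v)+a(w-w^*)\big],
\]
\[
H(u,v,w)=r_3\big[b(u^*-u)+b(v^*-v)+(w^*-w)\big].
\]
Then look for a Lyapunov function of the form
\[
V(u,v,w)=A\,u^* g\!\left(\frac{u}{u^*}\right)+B\,v^* g\!\left(\frac{v}{v^*}\right)+C\,w^* g\!\left(\frac{w}{w^*}\right)
\]
with positive constants $A,B,C$ to be chosen. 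Since $\partial_u\big(u^*g(u/u^*)\big)=1-u^*/u$, the Lie derivative of $V$ along the kinetics $X=(uF,vG,wH)$ is
\[
L_XV=A(u-u^*)F+B(v-v^*)G+C(w-w^*)H,
\]
which is a quadratic form in $(u-u^*,v-v^*,w-w^*)$. The plan is to choose $A,B,C$ so that this quadratic form is negative definite: the off-diagonal coupling between $u$ and $v$ contributes $(Ar_1k+Br_2h)(v^*-v)(u-u^*)$, while the couplings with $w$ contribute $(Aar_1-Cbr_3)$ and $(Bar_2-Cbr_3)$ times the respective cross terms. Taking $C$ large (e.g. $C$ chosen so $Cbr_3$ balances $Aar_1$ and $Bar_2$, then adjusting) one can kill the $w$-cross terms, reducing the question to negative definiteness of a $2\times2$ matrix in $(u-u^*,v-v^*)$, whose off-diagonal entry involves $k$ and $h$ and whose determinant condition is governed by $hk<1$ from~\eqref{c1}. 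This is where the structural conditions $0<h,k<1$ and $a>1$, $b>0$ are genuinely used, and verifying that a good choice of $A,B,C$ exists is the main technical obstacle; it should come down to a short linear-algebra computation exploiting $hk<1$.

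Once $L_XV(u,v,w)\le -\alpha V(u,v,w)$ on the relevant compact box (using the lower bound~\eqref{lb} together with the boundedness of $(u,v,w)$, and noting that $g$ and $-g$ are comparable on such a box so a linear-in-$V$ bound follows from the negative-definiteness of the quadratic form), the rest follows exactly as in Lemma~\ref{LE_entire}. Namely, fix a smooth cutoff $\varphi$ with $\varphi\equiv1$ on $[-1,1]$ and $\varphi\equiv0$ outside $[-2,2]$, set
\[
\mathcal E_R(t)=\int_\R \varphi(R^{-1}x)\,V(u,v,w)(t,x)\,dx,
\]
differentiate in $t$, integrate the diffusion terms by parts twice, and use $V_1'',V_2'',V_3''\ge0$ to discard the good negative gradient terms while bounding the $\varphi''$ terms by $K/R$ for some $R$-independent $K$. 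This gives $\frac{d}{dt}\mathcal E_R(t)\le \frac{K}{R}-\alpha\mathcal E_R(t)$ for all $t\in\R$, hence $\mathcal E_R(t)\le K/(\alpha R)$ for all $t\in\R$ and $R>0$. Letting $R\to+\infty$ and applying Fatou's lemma yields $\int_\R V(u,v,w)(t,x)\,dx=0$ for every $t$, so $V\equiv0$, i.e. $(u,v,w)\equiv(u^*,v^*,w^*)$.

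I expect the Lyapunov bookkeeping and the cutoff argument to be essentially a transcription of the two-dimensional case, so the only real content is the choice of weights $A,B,C$ making $L_XV$ negative definite. A clean way to organize this: first eliminate the $w$-cross terms by imposing $Aar_1=Bar_2=Cbr_3$ (three positive numbers, so this just fixes ratios), which forces $L_XV$ to equal $-Ar_1(u-u^*)^2 - Br_2(v-v^*)^2 - Cr_3(w-w^*)^2 + (Ar_1k+Br_2h)(u-u^*)(v^*-v)$; then observe the $(u,v)$-part is negative definite precisely when $(Ar_1k+Br_2h)^2<4Ar_1Br_2$, and with $Ar_1=Br_2$ (compatible with the previous constraint iff $r_1$ and $r_2$ play symmetric roles — otherwise one keeps $A,B$ free and checks the discriminant) this reduces to $(k+h)^2<4$, which holds since $h,k<1$. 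If the constraint $Aar_1=Bar_2$ is incompatible with $A r_1 = B r_2$ when $r_1\ne r_2$, one instead keeps all three weights free and solves the two scalar equations $Aar_1=Cbr_3$, $Bar_2=Cbr_3$ together with the single inequality $(Ar_1k+Br_2h)^2<4Ar_1Br_2$; substituting $Ar_1=Br_2=Cbr_3/a$ shows the inequality becomes $(k+h)^2 Cr_3/a<4Cr_3/a$, i.e. again $(k+h)^2<4$. So the weights exist unconditionally under~\eqref{c1}, and the proof goes through.
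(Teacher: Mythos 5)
Your proposal is correct and follows essentially the same route as the paper: the paper uses the Lyapunov function $\Phi(u,v,w)=u^*g(u/u^*)+\frac{r_1v^*}{r_2}g(v/v^*)+\frac{r_1aw^*}{r_3b}g(w/w^*)$, which is exactly the choice of weights $Ar_1=Br_2=Cbr_3/a$ you arrive at, and likewise reduces negative definiteness to $h+k<2$. (Note that your worry about a possible incompatibility between $Aar_1=Bar_2$ and $Ar_1=Br_2$ is spurious, as these are the same equation after cancelling $a$.)
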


\begin{proof}
The proof of this lemma makes use of similar arguments as the ones used for Lemma \ref{LE_entire}.
Let $(u,v,w)$ be a bounded entire solution of \eqref{u-eq}-\eqref{w-eq} satisfying \eqref{lb}.
We denote by $0<m\leq M$ the lower and upper bounds of $(u,v,w)$.
Recall that we defined the function $g (x)= x - \ln (x) -1$, and let us consider the function $\Phi=\Phi(u,v,w)$ given by
$$
\Phi(u,v,w):=u^*g\left(\frac{u}{u^*}\right)+\frac{r_{1}v^*}{r_{2}}g\left(\frac{v}{v^*}\right)+\frac{r_{1} aw^*}{r_{3} b}g\left(\frac{w}{w^*}\right).
$$

Let us compute the Lie derivative of $\Phi$, denoted by $L_X \Phi$, along the three dimensional vector field $X=(uF(u,v,w), vG(u,v,w), wH(u,v,w))$ associated to the kinetic part of \eqref{u-eq}-\eqref{w-eq}.
It reads for each $u>0$, $v>0$ and $w>0$ as:
 \begin{align*}
&L_X\Phi( u,v,w)\\
&=
r_1 \{ -(u-u^*)^{2}-k(u-u^*)(v-v^* )+a(u-u^*)(w-w^*) \}\\
&\qquad +r_1 \{ -h(u-u^*)(v-v^*) -(v-v^*)^{2}+a(v-v^*)(w-w^*) \}\\
&\qquad + \frac{r_{1}a}{b} \{ -b(u-u^*)(w-w^*)-b(v-v^*)(w-w^*)-(w-w^*)^{2} \}
\\
&=
-r_{1}(u-u^*)^{2}
-r_{1}(k+h)(u-u^{*})(v-v^{*})-r_{1} (v-v^{*})^{2}-\frac{r_{1}a}{b}(w-w^{*})^{2}.
\end{align*}
Now observe that for all $(X_1,X_2)=(r\cos\theta,r\sin\theta)\in\R^2$ one has
\begin{equation*}
\begin{split}
&X_1^2+(k+h)X_1X_2+X_2^2= r^2 (1+(h+k)\cos\theta \sin\theta)\\
&=r^2\left[1+\frac{h+k}{2}\sin(2\theta)\right]\geq r^2\left[1-\frac{h+k}{2}\right]=\left[1-\frac{h+k}{2}\right](X_1^2+X_2^2).
\end{split}
\end{equation*}
As a consequence, since $0<h,k<1$ (see \eqref{c1}) one has $0<h+k<2$ and there exists $\alpha>0$ such that for all $u>0$, $v>0$ and $w>0$ one has
\begin{equation*}
L_X\Phi(u,v,w)\leq -\alpha\left[(u-u^{*})^2+(v-v^{*})^2+(w-w^{*})^{2}\right].
\end{equation*}
 Furthermore, recalling the definition of $\Phi$ above, there exists $\beta>0$ such that
 \begin{equation*}
 L_X\Phi( u,v,w)\leq -\beta \Phi( u,v,w),\;\forall (u,v,w)\in [m,M]^3.
 \end{equation*}
 From this inequality, the proof of Lemma \ref{LE-entire-full} follows from the same arguments as the ones used for Lemma~\ref{LE_entire}.
\end{proof}

\section{Proofs of Theorem \ref{th:mutants} and Theorem \ref{theo:spread_plus}}\label{sec:mutants}

In this section, we mostly deal with the `two mutants' case, i.e., Theorem~\ref{th:mutants}. However, because the arguments for the upper bound on the speeds is very similar,
we also include the proof of Theorem~\ref{theo:spread_plus}.

\subsection{Upper bounds on the spreading speeds}

In this subsection we derive upper estimates for the spreading speed of the solution of system \eqref{u-eq}-\eqref{w-eq}. Our first result is given below.
It proves both part~(i) of Theorem \ref{th:mutants} and Theorem \ref{theo:spread_plus}.

\begin{theorem}\label{theo:spread_plusplus}
Let $(u_0 ,v_0, w_0) \in X_0$ (recall \eqref{X0}) be such that $u_0$ and $v_0$ are both nontrivial and compactly supported, and $(u,v,w)$ be the corresponding solution.
\begin{itemize}
\item[(i)] If $ \mu = 0$, then
\be\label{u-beyond}
\lim_{ t \to +\infty} \sup_{|x| \geq ct} u(t,x) =0,
\ee
for any $c>c_u^*$, and
\be\label{v-beyond}
\lim_{ t \to +\infty} \sup_{|x| \geq ct} v(t,x) =0,
\ee
for any $c>c_v^*$; moreover,
\be\label{w-beyond}
\lim_{ t \to +\infty} \sup_{|x| \geq ct} | w (t,x) -1| = 0,
\ee
for any $c > \max \{ c_u^* ,c_v^* \}$.
\item[(ii)] If $\mu > 0$, then
\bea
&&\lim_{ t \to +\infty} \sup_{|x| \geq ct} u(t,x) =0,\label{u-beyond-mu}\\
&& \lim_{ t \to +\infty} \sup_{|x| \geq ct} v(t,x) =0 , \label{v-beyond-mu}\\
&&\lim_{ t \to +\infty} \sup_{|x| \geq ct} | w (t,x) -1| = 0,\label{w-beyond-mu}
\eea
for any $c >  c_\mu^*$.
\end{itemize}
\end{theorem}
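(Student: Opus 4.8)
The plan is to use the comparison principle at the level of the scalar (or cooperative) equations satisfied by the predator components, obtained by dropping the predator-prey feedback and using the a priori bound $w\le 1$ from Proposition~\ref{prop:prelim_estim}. The point is that since $w<1$ everywhere, each predator density is a subsolution of a Fisher--KPP type equation (scalar when $\mu=0$, cooperative $2\times 2$ when $\mu>0$) linearized around the predator-free state, and the classical spreading speed of that comparison problem gives the desired upper bound on the support of the predators. Once the predators are known to vanish beyond speed $c_u^*$ (resp. $c_v^*$, resp. $c_\mu^*$), the equation for $w$ shows that $1-w$ also vanishes beyond that speed, since the forcing terms $bu+bv$ are then exponentially small in the relevant moving frame.

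More precisely, in the case $\mu=0$ I would first observe that, from $w\le 1$ and the sign of $a>1$, the function $u$ satisfies
\[
u_t \le d_1 u_{xx} + r_1 u\,(a-1-u) \le d_1 u_{xx} + r_1 (a-1) u \quad\text{in }(0,+\infty)\times\R,
\]
and similarly for $v$ with $d_2, r_2$. Since $u_0$ is compactly supported and bounded, a standard Gaussian-type supersolution $C\,t^{-1/2}\exp\!\big(r_1(a-1)t - (x-x_0)^2/(4d_1 t)\big)$ (or, more simply, the known KPP spreading result for $u_t = d_1 u_{xx} + r_1(a-1)u(1 - u/(a-1))$ which has spreading speed $c_u^* = 2\sqrt{d_1 r_1 (a-1)}$) dominates $u$ for all time by comparison; evaluating this supersolution in the frame $|x|\ge ct$ with $c>c_u^*$ gives uniform decay to $0$, which is \eqref{u-beyond}. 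The identical argument with the index $2$ gives \eqref{v-beyond}. For the case $\mu>0$, the two predator inequalities combine into the cooperative system
\[
\begin{cases}
u_t \le d_1 u_{xx} + (r_1(a-1)-\mu)u + \mu v,\\
v_t \le d_2 v_{xx} + \mu u + (r_2(a-1)-\mu)v,
\end{cases}
\]
whose linear part has matrix $M[\mu]$; using the exponential ansatz $(p,q)e^{-\gamma(x-ct)}$ with $(p,q)$ the Perron--Frobenius eigenvector of $M[\mu,\gamma]$ and $c>c_\mu^*$ one builds a supersolution of this cooperative system that dominates $(u,v)$ initially (by choosing the amplitude large, using compact support of $u_0,v_0$) and hence for all $t>0$ by the comparison principle for cooperative systems; this supersolution decays to $0$ uniformly on $|x|\ge ct$, yielding \eqref{u-beyond-mu}--\eqref{v-beyond-mu}.

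Finally, for the $w$-component, set $z:=1-w\ge 0$, which by $w\le 1$ and \eqref{w-eq} satisfies
\[
z_t = d_3 z_{xx} + r_3(1-z)\big(bu+bv - z\big),
\]
so in particular $z_t \le d_3 z_{xx} - r_3(1-z)z + r_3 b(u+v)$. Fix $c > \max\{c_u^*,c_v^*\}$ (resp. $c>c_\mu^*$) and an intermediate speed $c'$ with $\max\{c_u^*,c_v^*\}<c'<c$ (resp. $c_\mu^*<c'<c$); by the already established predator bounds, for any $\varepsilon>0$ there is $T_\varepsilon$ with $u+v\le\varepsilon$ on $\{|x|\ge c't\}$ for $t\ge T_\varepsilon$. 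On this region $z$ is then a subsolution of a linear equation with small source $r_3 b\varepsilon$ and negative zeroth-order coefficient, so a direct barrier argument in the moving frame $|x|\ge ct$ (comparing with a constant multiple of $\varepsilon$ plus a travelling exponential that absorbs the boundary data at $|x|=c't$) forces $\limsup_{t\to\infty}\sup_{|x|\ge ct} z \le C\varepsilon$; letting $\varepsilon\to0$ gives \eqref{w-beyond} (resp. \eqref{w-beyond-mu}). The main obstacle here is the bookkeeping for the $w$-component: unlike the predators it does not decay by itself but only because its source terms do, so one must carefully propagate the predator decay rate through a non-autonomous linear parabolic inequality in a moving frame, and in particular choose the auxiliary speed $c'$ and the exponential barrier so that the contribution of the ``boundary'' at $|x|=c't$ is negligible at speed $c$. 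Everything else is a routine application of the comparison principle together with the explicit form of $c_u^*$, $c_v^*$ and the convexity/Perron--Frobenius properties of $\Lambda[\mu,\gamma]$ recalled in Section~2.
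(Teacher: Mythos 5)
For the predator components your argument is essentially the paper's: using the a priori bound $w\le 1$ to drop the predator--prey coupling, one compares $u$ (resp.\ $v$) with the scalar KPP equation $u_t=d_1u_{xx}+r_1u(a-1-u)$ and invokes the Aronson--Weinberger spreading result when $\mu=0$, and one compares $(u,v)$ with the cooperative linear system whose exponential supersolution $(p,q)e^{-\gamma(x-c^*_\mu t)}$ (with $(p,q)$ Perron--Frobenius for $M[\mu,\gamma]$) yields the bound when $\mu>0$. This is exactly what the paper does.

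For the prey component the two proofs diverge. The paper argues by contradiction and compactness: if $w(t_n,x_n)$ stays bounded away from $1$ along some $t_n\to\infty$, $|x_n|\ge ct_n$, parabolic estimates produce an entire-in-time limit solution $(u_\infty,v_\infty,w_\infty)$; the already established predator bounds force $u_\infty\equiv v_\infty\equiv 0$, so $w_\infty$ solves the logistic equation and, being $\ge\beta>0$, must be identically $1$ by the strong maximum principle --- a contradiction. You instead propose an explicit barrier for $z:=1-w$. Since $0\le z\le 1-\beta$, the coefficient $-r_3(1-z)$ in your inequality is in fact $\le -r_3\beta<0$, so one has the linear bound $z_t\le d_3 z_{xx}-r_3\beta z+r_3 b(u+v)$; with $u+v\le\varepsilon$ on $\{|x|\ge c't\}$ for $t\ge T_\varepsilon$, a supersolution of the form $Z_\varepsilon(t)+(1-\beta)e^{-\kappa(|x|-c't)}$ (where $Z_\varepsilon$ solves the associated ODE with $Z_\varepsilon(T_\varepsilon)=1-\beta$, and $\kappa>0$ is small enough that $\kappa c'-d_3\kappa^2+r_3\beta\ge0$) dominates $z$ and gives $\limsup_{t\to\infty}\sup_{|x|\ge ct}z\le b\varepsilon/\beta$ for any $c>c'$; letting $\varepsilon\to0$ and $c'\downarrow\max\{c^*_u,c^*_v\}$ (resp.\ $c^*_\mu$) concludes. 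Both routes are correct; your barrier argument is more quantitative (it produces an explicit rate), while the paper's Liouville-type compactness argument is shorter and avoids tuning $\kappa$, $c'$ and the source threshold $\varepsilon$, at the price of being non-constructive.
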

Notice that in the case when $\mu >0$, then one can assume without loss of generality and as in Theorem~\ref{th:mutants} that the pair $(u_0,v_0)$ is nontrivial.
This immediately comes from the fact that both components must be simultaneously positive or simultaneously identical to 0 for all positive times.

\begin{proof}
First, due to $w\le 1$ and $u,v\ge 0$ we have that
\beaa
\begin{cases}u_t \leq d_1 u_{xx} + r_1 u (a-1 - u) + \mu (v-u),\\
v_t \leq d_2 v_{xx} + r_2 v (a-1-v) + \mu (u-v),
\end{cases}
\eeaa
for all $t >0 $ and $x \in \R$.
In the case when $\mu = 0$, this system is actually uncoupled so that \eqref{u-beyond} and \eqref{v-beyond} follow from a comparison principle and the classical result of~\cite{AW75} for the scalar equation.

In the case when $\mu >0$, we further notice that $u,v$ is a subsolution of the following linear system,
\beaa
\begin{cases}
u_t \leq d_1 u_{xx} + r_1 u (a-1 ) + \mu (v-u),\\
v_t \leq d_2 v_{xx} + r_2 v (a-1) + \mu (u-v),
\end{cases}
\eeaa
which satisfies a comparison principle. Recalling the definition of $c_\mu^*$ in~\eqref{cmu*}, this linear system admits a solution of the type
$$(p,q) e^{-\gamma (x-c_\mu^* t)},$$
where $p$, $q$ and $\gamma$ are positive. By applying the comparison principle, we obtain \eqref{u-beyond-mu} and \eqref{v-beyond-mu}.

Let us now deal with $w$, i.e., \eqref{w-beyond} and~\eqref{w-beyond-mu}. The argument is the same in both cases.
Proceed by contradiction and assume that there exists some $c> \max \{ c_u^* ,c_v^* \}$ (if $\mu =0$) or $c > c_\mu^*$ (if $\mu >0$), and a sequence $\{(t_n, x_n)\}$ such that $t_n \to +\infty$, $|x_n| \geq c t_n$ and
$$\limsup_{n \to +\infty}  w(t_n,x_n) < 1.$$
Thanks to standard parabolic estimates (recall that all three components are uniformly bounded by Proposition~\ref{prop:prelim_estim}), we can assume up to extraction of a sub-sequence that $(u,v,w)(t+t_n,x+x_n)$
converges to an entire in time solution $(u_\infty , v_\infty, w_\infty)$ of the same system.
From what we have proved above, we know that $u_\infty \equiv v_\infty \equiv 0$, and thus~$w_\infty$ solves
$$(w_\infty)_t = d_3 (w_\infty)_{xx} + r_3 w_\infty (1 - w_\infty).$$
Recalling that $w$, hence $w_\infty$, is everywhere larger than $\beta$, we conclude that $w_\infty \equiv 1$. This is a contradiction and completes the proof.
\end{proof}

\subsection{Spreading for the two mutants system: the case when $\mu >0$}

In this subsection, we assume {that $0 < \mu \leq (a-1) \min \{ r_1, r_2 \}/2$,} so that $u$ and $v$ denote the densities of two mutant types of the same predator species.
As we mentioned before, due to $\mu > 0$, the sub-system composed of equations~\eqref{u-eq}-\eqref{v-eq} with constant $w$ roughly has a cooperative structure around the trivial steady state $(0,0)$.

To prove the second part of Theorem~\ref{th:mutants}, we apply Lemma \ref{LE-weak-strong} and more precisely its Corollary \ref{COR-weak-strong} with $c_1 = c_\mu^*$.
According to this aforementioned result, part~(ii) of Theorem~\ref{th:mutants} directly follows from the next lemma, which shows that $u$ and $v$ cannot go extinct simultaneously in a moving frame with speed less than $c^*_\mu$.

\begin{lemma}\label{weak_sum}
For any $c \in [0, c^*_\mu)$, there exists some $\varepsilon (c)>0$ such that, for any initial data satisfying $u_0 + v_0 \not \equiv 0$, $0 \leq  u_0 ,v_0 \leq a-1$ and $\beta \leq w_0 \leq 1$,
the corresponding solution $(u,v,w)$ satisfies
$$\limsup_{t \to +\infty} \, (u(t,ct) + v (t,ct) ) \geq \varepsilon (c).$$
\end{lemma}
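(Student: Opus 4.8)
The plan is to argue by contradiction, constructing a compactly supported stationary subsolution for the $(u,v)$ sub-system and exploiting the cooperative structure near $(0,0)$ together with Proposition~\ref{prop:linear_instab_mutant}. Suppose the conclusion fails for some $c\in[0,c^*_\mu)$: then along a sequence of initial data (with $u_0+v_0\not\equiv0$) and times $t_n\to+\infty$ one has $(u+v)(t_n,ct_n)\to0$. First I would set up the problem in the moving frame $\xi=x-ct$ and pass to an entire-in-time omega-limit solution $(u_\infty,v_\infty,w_\infty)$ of \eqref{u-eq}-\eqref{w-eq} with $(u_\infty+v_\infty)(0,0)=0$; by the strong maximum principle this forces $u_\infty\equiv v_\infty\equiv0$ in the moving frame, hence actually $u_\infty\equiv v_\infty\equiv0$ everywhere and $w_\infty\equiv1$ (the latter as in Theorem~\ref{theo:spread_plusplus}, using $w_\infty\ge\beta$). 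This is the ``asymptotic extinction in the moving frame'' scenario I want to rule out.

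The core of the argument is then a subsolution construction. Fix $c$ with $|c|<c^*_\mu$ and apply Proposition~\ref{prop:linear_instab_mutant} with $\delta=0$: there exist $R_0$ and a principal eigenfunction pair $(\varphi,\psi)>0$ on $(-R_0,R_0)$, vanishing at $\pm R_0$, with principal eigenvalue $\Lambda_{R_0}(c,0)<0$. Using the continuity $\Lambda_{R}(c,\delta)=\Lambda_R(c,0)+2r_1\delta$, I can also absorb a small loss coming from the nonlinear competitive terms $-k v$, $-h u$ and from $w$ being only close to (not equal to) $1$: more precisely, since $w\ge\beta$ is not enough, I first use the extinction-in-moving-frame picture above to say that for the omega-limit solution, on any fixed ball, $w$ can be taken arbitrarily close to $1$ after a large time shift. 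Then I build the subsolution $(\underline u,\underline v)(t,\xi)=\eta\,(\varphi(\xi),\psi(\xi))$ for small $\eta>0$, supported in $|\xi|\le R_0$, and check using $\Lambda_{R_0}(c,0)<0$ that it is a (generalized, i.e.\ in the viscosity/weak sense at the boundary of its support) subsolution of the $(u,v)$ system linearized at $(0,0)$ with $w\equiv1$, with the competitive corrections controlled by taking $\eta$ small. The cooperativity of $M[\mu,\gamma]$ (irreducibility, $\mu>0$) is what makes the comparison principle available for this two-component subsolution.

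Finally I would derive the contradiction: since $(u+v)(t_n,\cdot)$ does not go to $0$ uniformly — indeed $u,v>0$ everywhere for $t>0$ and, by parabolic Harnack estimates, $u(t,\cdot)+v(t,\cdot)$ is bounded below by a positive constant on any fixed ball once $t$ is large — at some large time the solution in the moving frame dominates $(\underline u,\underline v)$ on $[-R_0,R_0]$. The comparison principle for the cooperative system then propagates this lower bound for all later times, so $(u+v)(t,ct)$ stays bounded below by a positive constant along the orbit, contradicting $(u+v)(t_n,ct_n)\to0$. The quantitative $\varepsilon(c)$ then comes out of a normal-families/compactness argument over the closure $\omega_0$ of orbits, exactly as in the first step of Lemma~\ref{LE-weak-strong}: if no uniform $\varepsilon(c)$ existed, a limiting argument would again produce an entire solution with $u_\infty\equiv v_\infty\equiv0$, which the subsolution argument excludes.

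The main obstacle I anticipate is handling the fact that $w$ is not identically $1$: the subsolution for $(u,v)$ genuinely needs $w$ near its carrying capacity on the support of the eigenfunction, and one must interleave the ``$w$ is close to $1$ far ahead and in the omega-limit'' information with the spectral estimate $\Lambda_{R_0}(c,\delta)<0$ in a way that keeps all the small parameters ($\eta$, $\delta_0$, the defect of $w$ from $1$) consistent. Making this quantitative and uniform over $\omega_0(0,c_1)$ — rather than just for a single solution — is the delicate bookkeeping step, but it follows the template already established for Lemma~\ref{LE-weak-strong} and the construction in~\cite{Girardin18}.
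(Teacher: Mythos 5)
Your proposal takes essentially the same approach as the paper's proof: negating the uniform $\varepsilon(c)$, showing that $u$, $v$ and $1-w$ are uniformly small in a moving window for all large times, and invoking Proposition~\ref{prop:linear_instab_mutant} to build a subsolution $\epsilon\, e^{-\Lambda_R t}(\varphi,\psi)$ with $\Lambda_R<0$ that grows unboundedly and contradicts the bound $u,v\le a-1$. One small slip worth fixing: the competitive terms and the defect $1-w$ in the differential inequality for the actual solution are absorbed into the $\delta>0$ of Proposition~\ref{prop:linear_instab_mutant} by the smallness of the \emph{actual} $u$, $v$, $1-w$ in the moving ball (which comes from the contradiction hypothesis plus a strong maximum principle argument, as in the paper's Claim leading to~\eqref{claim_weakspread_w1}), not by taking the subsolution amplitude $\eta$ small — $\eta$ only serves to secure the initial ordering at the starting time, and the appeals to parabolic Harnack and to generalized (viscosity) subsolutions are unnecessary since $\varphi,\psi$ vanish at $\pm R$ while $u,v>0$ there.
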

\begin{proof}
First notice that, because $\mu >0$, and even if one of the two functions $u_0$ and $v_0$ is identically equal to 0, the strong maximum principle ensures that both $u >0$ and $v >0$ for positive times.

Let $c \in [0, c^*_\mu)$ and assume by contradiction that there is a sequence of solutions $\{(u_n,v_n , w_n)\}$ with initial data $\{(u_{0,n}, v_{0,n}, w_{0,n})\}$ such that
\begin{equation}\label{eq:weak_sum1}
\limsup_{t \to +\infty}  \, (u_n (t,ct) + v_n (t,ct) ) \leq \frac{1}{n}.
\end{equation}
This clearly implies that there exists $t_n$ large enough such that
$$ \max \, \{ u_n (t, ct) , v_n (t,ct) \} \leq \frac{2}{n} , \ \forall t \geq t_n.$$
In particular, passing to the limit as $n \to +\infty$, and applying a strong maximum principle, one may check that for any $R>0$,
$$\limsup_{n \to +\infty} \sup_{ t \geq t_n, |x-ct| \leq R }  ( u_n (t,x) + v_n (t,x) ) =  0.$$
We next claim that, for any $R>0$,
\be\label{claim_weakspread_w1}
\limsup_{n \to +\infty} \sup_{t \geq t_n, |x-ct | \leq R} |w_n (t,x) - 1| =0.
\ee
Proceed by contradiction and take a sequence $\{(s_n,x_n)\}$ with $s_n \geq t_n$ and $x_n \in (cs_n -R,cs_n+R)$ such that $\limsup_{n \to +\infty} w_n (s_n,x_n) < 1$.
Since solutions are bounded uniformly with respect to $n$,
we can use standard parabolic estimates and extract a sub-sequence so that $(u_n, v_n, w_n)(t+s_n,x+x_n)$ converges to an entire in time solution $(u_\infty, v_\infty, w_\infty)$.
By construction, $u_\infty, v_\infty \geq 0$ and $u_\infty (t,0) = v_\infty (t,0)= 0$ for all $t >0$. Hence, by the strong comparison principle, we have that $u_\infty \equiv v_\infty \equiv 0$.
Thus $w_\infty$ satisfies
$$(w_\infty)_t = d_3 (w_\infty)_{xx} + r_3 w_\infty (1 - w_\infty).$$
Since $w_\infty \geq \beta$, we deduce that $w_\infty \equiv 1$, which contradicts the fact that $w_\infty (0,0) < 1$. The claim \eqref{claim_weakspread_w1} is thus proved.

Then, for any small $\delta >0$ and large $R>0$, one can increase $n$ so that $(u_n, v_n)$ satisfies
\begin{equation*}
\left\{
\begin{array}{l}
(u_n)_t \geq d_1 (u_n)_{xx} + r_1 u_n  (a-1 -2 \delta ) + \mu (v_n -u_n), \vspace{3pt}\\
(v_n)_t \geq d_2 (v_n)_{xx} + r_2 v_n (a-1 - 2 \frac{r_1}{r_2} \delta   )  + \mu (u_n-v_n) ,
\end{array}
\right.
\end{equation*}
for $t \geq t_n$ and $|x -ct_n| \leq R$. Notice that this is a cooperative system, hence it satisfies the comparison principle. In particular, we get that
$$u_n  (t, x +ct) \geq \underline{u}_n (t,x) \, , \quad v_n (t, x+ ct) \geq \underline{v}_n (t,x) ,$$
for all $t \geq t_n$ and $|x| \leq R$, where
$$(\underline{u}_n ,\underline{v}_n )(t,x) := \epsilon e^{-\Lambda_R t} (\varphi , \psi) (x),$$
with $\Lambda_R$, $(\varphi,\psi)$ the principal eigenvalue and the associated positive eigenfunction pair from Proposition~\ref{prop:linear_instab_mutant}, and $\epsilon >0$ is small enough (possibly depending on $n$) so that
$$u_n (t_n , x + c t_n) \geq \epsilon e^{-\Lambda_R t_n} \varphi (x) \, , \quad v_n (t_n, x + ct_n) \geq \epsilon e^{-\Lambda_R t_n} \psi (x),$$
for all $|x| \leq R$. Using again Proposition~\ref{prop:linear_instab_mutant}, we have that $\Lambda_R < 0$ if $\delta$ is small enough and $R$ is large.
Thus, $u_n (t,ct) \geq \underline{u}_n (t,ct) \to +\infty$ as $t \to +\infty$, which contradicts \eqref{eq:weak_sum1}. The lemma is proved.
\end{proof}
Recall here that the above lemma completes the proof of part~(ii) of Theorem~\ref{th:mutants}, by applying Corollary \ref{COR-weak-strong} and a symmetry argument to deal with the negative part of the spatial domain.
Together with Theorem~\ref{theo:spread_plusplus}, this ends the proof of Theorem~\ref{th:mutants}.

\section{Lower bounds on the speeds in the competitor case}\label{sec:competitors1}

In this section we derive preliminary lower spreading estimates for the two competitors system, namely when $\mu=0$. More precisely, we prove parts~(i) and~(ii) of Theorem~\ref{THEO-inside1} in the case when $c_u^* = c_v^*$.
However, the results proved in this section can also be applied in the case when $c_v^* < c_u^*$ and they shall serve as a starting point in the proofs of Theorem~\ref{inside} in Section~\ref{competitors_faster}.

Throughout this section we assume that $\mu = 0$ and
$$
c_v^*\leq c_u^*.
$$
Along this section the initial data is {\bf not} assumed to be compactly supported but it is a general function $(u_0,v_0,w_0)\in X_0$, where $X_0$ has been defined in~\eqref{X0}.
In particular, we have that $0 \leq u_0 , v_0 \leq a-1$ and $\beta \leq w_0 \leq 1$. We denote by $(u,v,w)$ the corresponding solution. Then our first result describes a lower spreading estimate for the sum $u+v$.
\begin{proposition}\label{PROP}
Let $(u_0,v_0,w_0) \in X_0$ be such that $u_0+v_0\not\equiv 0$. Then the corresponding solution $(u,v,w)$ satisfies, for each $c\in (0,c_v^*)$,
\begin{equation*}
\liminf_{t\to +\infty}\inf_{|x|\leq ct} \left(u(t,x)+v(t,x)\right)>0.
\end{equation*}
\end{proposition}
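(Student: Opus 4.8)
The plan is to derive Proposition~\ref{PROP} from Lemma~\ref{LE-weak-strong} applied with $\zeta=\xi=1$, $c_2=0$ and $c_1=c_v^*$, using in addition the reflection $x\mapsto -x$ to cover negative positions (as in the remark following Corollary~\ref{COR-weak-strong}). The only real content is thus the verification of hypothesis~\eqref{LE-hyp}, namely the following \emph{weak} spreading statement: for each $c\in[0,c_v^*)$ there is $\varepsilon(c)>0$ such that for every $(\tilde u_0,\tilde v_0,\tilde w_0)\in\omega_0(0,c_v^*)$ with $\tilde u_0+\tilde v_0\not\equiv0$ the corresponding solution satisfies $\limsup_{t\to+\infty}(\tilde u(t,ct)+\tilde v(t,ct))\ge\varepsilon(c)$. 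This is the competitor-case counterpart of Lemma~\ref{weak_sum}, and I would prove it by the same contradiction argument, the two adaptations being that, since $\mu=0$, one of the two predators may be identically zero (so one must keep track of which component is initially nontrivial), and that the cooperative eigenvalue problem of Proposition~\ref{prop:linear_instab_mutant} is replaced by the scalar Dirichlet eigenvalue of Lemma~\ref{LE-eig}.

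Concretely, suppose the weak statement fails for some $c\in[0,c_v^*)$: there is a sequence $(u_n,v_n,w_n)$ of solutions with data in $\omega_0(0,c_v^*)$, $u_{0,n}+v_{0,n}\not\equiv0$, and $\limsup_{t\to+\infty}(u_n+v_n)(t,ct)\le 1/n$. Up to a subsequence we may assume either $u_{0,n}\not\equiv0$ for all $n$ or $v_{0,n}\not\equiv0$ for all $n$; I treat the first case, the second being identical after exchanging the roles of $(u,d_1,r_1,c_u^*)$ and $(v,d_2,r_2,c_v^*)$. Since $\mu=0$, the strong maximum principle applied to the scalar equation satisfied by $u_n$ gives $u_n>0$ for $t>0$. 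As in Lemma~\ref{weak_sum} one first obtains $t_n\to+\infty$ with $(u_n+v_n)(t,ct)\le 2/n$ for $t\ge t_n$, and then, by recentering along the ray $x=ct$ and invoking the strong maximum principle (together with the logistic equation obeyed by $w_n\ge\beta$ when the predators are nearly absent), that for every $R>0$
\[
\limsup_{n\to+\infty}\ \sup_{t\ge t_n,\ |x-ct|\le R}(u_n+v_n)(t,x)=0,\qquad
\limsup_{n\to+\infty}\ \sup_{t\ge t_n,\ |x-ct|\le R}|w_n(t,x)-1|=0.
\]

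Now fix $\delta>0$ small and $R>0$ large. Because $c<c_v^*\le c_u^*$ one has $c^2/(4d_1)<r_1(a-1)$, so one may choose $\delta$ with $r_1(a-1-2\delta)>c^2/(4d_1)$ and then $R$ with $r_1(a-1-2\delta)-c^2/(4d_1)>d_1\pi^2/(4R^2)$; by Lemma~\ref{LE-eig} the principal Dirichlet eigenvalue $\lambda_R$ of $-d_1\varphi''-c\varphi'-r_1(a-1-2\delta)\varphi$ on $(-R,R)$ is then strictly negative, with positive eigenfunction $\varphi$. For $n$ large the two displayed limits force $u_n$ to satisfy $\partial_t u_n\ge d_1\partial_{xx}u_n+r_1(a-1-2\delta)u_n$ in the tube $\{t\ge t_n,\ |x-ct|\le R\}$ (using $-1-u_n-kv_n+aw_n\ge a-1-2\delta$ there, since $k<1$). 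Passing to the moving frame $\hat u_n(t,x):=u_n(t,x+ct)$ and comparing $\hat u_n$ on $(-R,R)$ with $\underline u(t,x):=\varepsilon_n e^{-\lambda_R(t-t_n)}\varphi(x)$ — legitimate because $\underline u$ vanishes at $x=\pm R$, $\hat u_n\ge0$, and $\varepsilon_n>0$ can be chosen small enough that $\hat u_n(t_n,\cdot)\ge\varepsilon_n\varphi$ on $[-R,R]$ thanks to $u_n(t_n,\cdot)>0$ — yields $u_n(t,ct)=\hat u_n(t,0)\ge\varepsilon_n e^{-\lambda_R(t-t_n)}\varphi(0)\to+\infty$, contradicting $(u_n+v_n)(t,ct)\le 2/n$. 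This establishes the weak statement. Finally, since $u_0+v_0\not\equiv0$ and $\mu=0$ imply $u+v>0$ for $t>0$, every positive-time moving-frame shift of the original solution lies in $\omega_0(0,c_v^*)$ with nontrivial sum, so Lemma~\ref{LE-weak-strong} gives $\liminf_{t\to+\infty}\inf_{0\le x\le ct}(u+v)(t,x)>0$ for all $c\in(0,c_v^*)$; the symmetric estimate on $\{-ct\le x\le 0\}$ follows from the reflection $x\mapsto -x$, and combining them proves the proposition. I expect the main obstacle to be the propagation of smallness from the single ray $x=ct$ to a full space-time tube (and its transfer to $w_n\to1$) before the linear subsolution can be launched — this is where the recentering/strong-maximum-principle bookkeeping is delicate — together with the need to identify and use the nontrivial predator, which, unlike in the mutant case, is not automatically both components.
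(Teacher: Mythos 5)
Your proof is correct and follows essentially the same route as the paper's: verify the weak-spreading hypothesis of Lemma~\ref{LE-weak-strong} (with $\zeta=\xi=1$, $c_2=0$, $c_1=c_v^*$) by a contradiction argument that propagates smallness along the ray to a space-time tube, forces $w_n\to1$ there, and then launches a Dirichlet-eigenfunction subsolution from Lemma~\ref{LE-eig}. The only cosmetic difference is that you extract a subsequence to fix which component is nontrivial, whereas the paper handles both cases for each $n$ by preparing both eigenvalues $\lambda_R^1,\lambda_R^2<0$ — this is immaterial.
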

Note that the above proposition proves, as a special case, part~(i) of Theorem \ref{THEO-inside1}.
\begin{proof}
We focus on the interval $[0,+\infty)$, as the estimate on $(-\infty,0]$ can be obtained similarly. We shall make use of the persistence lemma with $\zeta=\xi=1$ and $c_2=0$ while $0<c_1 = c_v^*$, see Lemma \ref{LE-weak-strong}.

Due to Lemma \ref{LE-weak-strong}, it is sufficient to show that for any $c\in [0,c_v^*)$ there exists $\varepsilon(c)>0$ such that
for each $(\tilde u_0,\tilde v_0,\tilde w_0)\in \omega_0(0,c_v^*)$ (recall Definition~\ref{definition-omega0}) with $\tilde u_0+\tilde v_0\not\equiv 0$ one has
\begin{equation}\label{az-bis}
\limsup_{t\to +\infty} (\tilde u (t,ct) +\tilde v (t,ct)) \geq \varepsilon(c),
\end{equation}
To prove \eqref{az-bis} we argue by contradiction by fixing $c\in [0,c_v^*)$ and assuming that there exists
a sequence $\tilde U_{0,n}=(\tilde u_{0,n},\tilde v_{0,n},\tilde w_{0,n})\in \omega_0(0,c_v^*)$ with $\tilde u_{0,n}+\tilde v_{0,n}\not\equiv 0$ for all $n\geq 1$ and
such that the corresponding solution $(\tilde u_{n},\tilde v_{n},\tilde w_{n})$ satisfies
\begin{equation*}
\limsup_{t\to  +\infty} \left(\tilde u_n(t,ct)+\tilde v_n(t,ct)\right)\leq \frac{1}{n},\;\forall n\geq 1.
\end{equation*}
This in particular means that for each $n\geq 1$ there exists $t_n$ such that
\begin{equation*}
\left(\tilde u_n+\tilde v_n\right)(t + t_n,c( t + t_n))\leq \frac{2}{n},\;\forall n\geq 1,\;\forall t\geq 0.
\end{equation*}
From the above inequality coupled with the strong maximum principle, we have possibly up to a sub-sequence that
\begin{equation}\label{aze-bis}
\left(\tilde u_n,\tilde v_n,\tilde w_n\right)(t+t_n, x+c(t+t_n))\to (0,0,1)\text{ as $n\to +\infty$},
\end{equation}
uniformly for $t\geq 0$ and locally uniformly for $x\in\R$.

Now, recalling that $0\leq c<c_v^*\leq c_u^*$, let $\delta>0$ be small enough such that
\begin{equation*}
\begin{split}
&r_1(a-1-\delta)-\frac{c^2}{4d_1}>0,\\
&r_2(a-1-\delta)-\frac{c^2}{4d_2}>0,
\end{split}
\end{equation*}
and, due to Lemma \ref{LE-eig}, choose $R>0$ large enough such that the principal eigenvalue problems,
\begin{equation*}
\begin{cases}
-d_i\varphi_i''(x)-c\varphi_n'(x)- r_i(a-1-\delta)\varphi_i(x)=\lambda^i_{R} \varphi_i(x),\;x\in (-R,R),\\
\varphi_i(\pm R)=0\text{ and }\varphi_i>0\text{ on $(-R,R)$},
\end{cases}
\end{equation*}
satisfy $\lambda_R^i <0$ for $i=1,2$.

Set, for $n\geq 1$,
$$
(u_n,v_n,w_n)(t,x):=\left(\tilde u_n,\tilde v_n,\tilde w_n\right)(t+t_n, x+c(t+t_n)).
$$
Since $u_n+v_n>0$, assume for instance that $u_n>0$.
Due to \eqref{aze-bis} there exists $n\geq 1$ large enough such that
\begin{equation}\label{az-w}
a w_n(t,x)-kv_n (t,x) -u_n (t,x)-1\geq a-1-\delta,\;\forall t\geq 0,\; \forall x\in [-R,R].
\end{equation}
Next, using \eqref{az-w} the function $u_n$ satisfies the following inequality for $t\geq 0$ and $x\in [-R,R]$:
\begin{equation*}
(u_n)_t \geq d_1 (u_n)_{xx} +c (u_n)_x +r_1 (a-1-\delta)u_n.
\end{equation*}
On the other hand, for any $\epsilon >0$, the function $\underline{u}(t,x):= \epsilon e^{-\lambda_R^1t}\varphi_1(x)$ becomes a sub-solution of the above equation.
Hence since $u_n>0$ and $\underline{u}(t,\pm R)=0$, there exists $\epsilon>0$ such that
$$
u_n(t,x) \geq \epsilon e^{-\lambda_R^1t}\varphi_1(x),\;\forall t\geq 0,\; \forall x\in [-R,R].
$$
Since $\lambda^1_{R}< 0$ one obtains that $\tilde u_n$ is unbounded for large time, which contradicts Proposition~\ref{prop:prelim_estim}.
The remaining case when $u_n \equiv 0$ and $v_n >0$ can be treated similarly and this completes the proof of \eqref{az-bis}.
As already mentioned above this also completes the proof of Proposition \ref{PROP} using Lemma \ref{LE-weak-strong} with $c_2=0<c_1 =c_v^*$ and $\zeta=\xi=1$.
\end{proof}
From the above proposition we shall now derive some new estimates for each component $u$ and $v$.
\begin{proposition}\label{PROP1}
Let $(u_0,v_0,w_0)\in X_0$ be given and $(u,v,w)$ be the corresponding solution. Recalling the definition of $c_u^{**}$ and $c_v^{**}$ in \eqref{c**}, the following statements hold:
\begin{itemize}
\item[(i)] if $v_0\not\equiv 0$ then for any $c\in (0,c_v^{**})$ one has
\begin{equation*}
\liminf_{t\to +\infty} \inf_{|x|\leq ct} v(t,x)>0;
\end{equation*}
\item[(ii)] if $u_0\not\equiv 0$ then for any $c\in \left(0,\min\left(c_u^{**},c_v^*\right)\right)$ one has
\begin{equation*}
\liminf_{t\to +\infty} \inf_{|x|\leq ct} u(t,x)>0.
\end{equation*}
\end{itemize}
\end{proposition}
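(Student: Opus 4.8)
The plan is to deduce each of the two lower bounds from Proposition~\ref{PROP}, using the persistence machinery of Lemma~\ref{LE-weak-strong} (with $\zeta,\xi$ chosen to single out one component) together with the linearization of the relevant scalar equation around the appropriate semi-trivial state. For part~(i), fix $c\in(0,c_v^{**})$ and apply Lemma~\ref{LE-weak-strong} with $c_2=0$, $c_1\in(c,c_v^{**})$, $\zeta=0$, $\xi=1$. It then suffices to verify the hypothesis~\eqref{LE-hyp}: for every $c'\in[0,c_1)$ there is $\varepsilon(c')>0$ such that any $(\tilde u_0,\tilde v_0,\tilde w_0)\in\omega_0(0,c_1)$ with $\tilde v_0\not\equiv0$ produces a solution with $\limsup_{t\to+\infty}\tilde v(t,c't)\ge\varepsilon(c')$. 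Arguing by contradiction as in the proof of Proposition~\ref{PROP}, one obtains a sequence of solutions $(\tilde u_n,\tilde v_n,\tilde w_n)$ with $\tilde v_n(t+t_n,c'(t+t_n))\to0$ uniformly in $t\ge0$. The key new point here — compared with Proposition~\ref{PROP} — is that we may no longer conclude $(\tilde u_n,\tilde v_n,\tilde w_n)(t+t_n,\cdot+c'(t+t_n))\to(0,0,1)$, since $\tilde u_n$ need not be small. Instead, we should show that along such a frame the limit of $(\tilde u_n,\tilde w_n)$ is bounded below by positive constants (using Proposition~\ref{PROP} together with a persistence argument for $\tilde u_n$ near the frame), pass to an entire limit solution $(\tilde u_\infty,0,\tilde w_\infty)$ of the $(u,w)$ subsystem, and invoke Lemma~\ref{LE_entire} to identify $(\tilde u_\infty,\tilde w_\infty)\equiv(\tilde p,\tilde q)$. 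Hence for $n$ large,
$$a\tilde w_n - h\tilde u_n - 1 \ge a\tilde q - h\tilde p - 1 - \delta = \frac{1-h}{1+ab}(a-1) - \delta$$
on a space window $[-R,R]$ around the moving frame, so that $\tilde v_n$ is a supersolution of $(v_n)_t\ge d_2(v_n)_{xx}+c'(v_n)_x + \left[r_2\frac{1-h}{1+ab}(a-1)-r_2\delta\right]v_n$ there. Choosing $\delta$ small and $R$ large so that the principal Dirichlet eigenvalue from Lemma~\ref{LE-eig},
$$\lambda_R = -r_2\frac{1-h}{1+ab}(a-1)+r_2\delta+\frac{c'^2}{4d_2}+\frac{d_2\pi^2}{4R^2},$$
is negative — which is possible precisely because $c' < c_1 < c_v^{**} = 2\sqrt{\frac{1-h}{1+ab}d_2r_2(a-1)}$ — we get that $\tilde v_n$ grows unboundedly, contradicting Proposition~\ref{prop:prelim_estim}. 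This establishes~\eqref{LE-hyp} and hence part~(i).

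For part~(ii) the argument is entirely parallel, now with $\zeta=1$, $\xi=0$, $c_1\in(c,\min(c_u^{**},c_v^*))$, and using Lemma~\ref{LE_entire_bis} for the $(v,w)$ subsystem: in a frame of speed $c'<c_1<c_v^*$ along which $\tilde u_n\to0$, Proposition~\ref{PROP} forces $\tilde v_n+\tilde w_n$ (hence, after a persistence step, both $\tilde v_n$ and $\tilde w_n$) to stay bounded below by positive constants, so the entire limit is $(0,\tilde v_\infty,\tilde w_\infty)$ with $(\tilde v_\infty,\tilde w_\infty)\equiv(\tilde p,\tilde q)$. One then linearizes~\eqref{u-eq} around this state, using $a\tilde q - k\tilde p - 1 = \frac{1-k}{1+ab}(a-1)$, and the negativity of the corresponding Dirichlet eigenvalue is guaranteed exactly by $c' < c_u^{**} = 2\sqrt{\frac{d_1r_1}{1+ab}(a-1)(1-k)}$. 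The restriction $c < c_v^*$ enters because we need the frame speed to be below $c_v^*$ in order to invoke Proposition~\ref{PROP}; this is why part~(ii) only yields the bound up to $\min(c_u^{**},c_v^*)$.

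The main obstacle I anticipate is the intermediate persistence step inside each contradiction argument: from $\limsup \tilde v_n(t,c't)\to0$ (say, in part~(i)) one must extract an entire limit solution in which the $\tilde u$ and $\tilde w$ components are \emph{uniformly} bounded away from $0$ on all of $\R^2$, so that Lemma~\ref{LE_entire} applies. The lower bound on $\tilde w_\infty$ is the delicate part — it should follow from~\eqref{inf_w} of Lemma~\ref{LE-weak-strong} applied with $\zeta=\xi=1$ on the frame interval $[0,c_1]$ (together with $w\ge\beta$ to start), combined with the fact that $\tilde u_n$ persists by a further application of Lemma~\ref{LE-weak-strong} or Proposition~\ref{PROP} localized near the frame; some care is needed because the relevant orbit-closure sets $\omega_0$ are taken with respect to the new (shifted) initial data, but this is handled exactly as in the second and third steps of the proof of Lemma~\ref{LE-weak-strong}. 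Once uniform positive bounds are in hand, the Liouville lemmas and the eigenvalue computation close the argument as above, and the symmetric estimate on $(-\infty,0]$ follows by the reflection $x\mapsto-x$.
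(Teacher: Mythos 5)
Your proposal matches the paper's proof in its essentials: apply Lemma~\ref{LE-weak-strong} with $(\zeta,\xi)=(0,1)$ for part~(i) and $(1,0)$ for part~(ii), verify the hypothesis~\eqref{LE-hyp} by contradiction via a moving-frame limit, use Proposition~\ref{PROP} to obtain a uniform positive lower bound for $\tilde u_\infty$ (resp.\ $\tilde v_\infty$) once the other predator vanishes in the limit, identify the limit $(\tilde u_\infty,\tilde w_\infty)$ (resp.\ $(\tilde v_\infty,\tilde w_\infty)$) as $(\tilde p,\tilde q)$ via Lemma~\ref{LE_entire} (resp.\ Lemma~\ref{LE_entire_bis}), and close with the Dirichlet eigenvalue computation of Lemma~\ref{LE-eig}, which is negative precisely when the frame speed lies below $c_v^{**}$ (resp.\ $c_u^{**}$). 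The speed restriction $c<c_v^*$ in part~(ii) enters exactly as you say, through Proposition~\ref{PROP}.

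The only place you over-complicate is the ``delicate'' step for $\tilde w_\infty$ that you flag at the end. Lemma~\ref{LE_entire} requires only $m\le w_\infty\le M$ for some constants $M>m>0$, and both bounds are already furnished by the a priori estimate $\beta\le w\le 1$ of Proposition~\ref{prop:prelim_estim}, which passes to every element of $\omega_0$ and hence to the limit $w_\infty$. There is no need to invoke~\eqref{inf_w} of Lemma~\ref{LE-weak-strong}; note also that~\eqref{inf_w} is an upper bound strictly below $1$ rather than the lower bound your sentence suggests, and a strict upper bound $M<1$ plays no role in the Lyapunov argument underlying Lemma~\ref{LE_entire}. Once this is observed the anticipated obstacle disappears, and your argument coincides with the paper's.
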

Notice that, when $c_v^* = c_u^*$, then $\min (c_u^{**} ,c_v^* ) = c_u^{**}$. Therefore the above proposition proves part~(ii) of Theorem \ref{THEO-inside1} by choosing more specific initial data.
We also point out that $c_v^{**} < c_v^* \leq c_u^*$, which is why a minimum does not appear in part~(i) of Proposition~\ref{PROP1}; actually both statements can be proved in the same fashion.

\begin{proof}
As mentioned above, we shall only prove (i) since the proof of (ii) is similar.
Moreover, we only focus on the interval $[0,+\infty)$, as the interval $(-\infty,0]$ can be dealt with by a symmetrical argument.
The proof of (i) makes use of Lemma \ref{LE-weak-strong} again, coupled with Proposition~\ref{PROP} and Lemma~\ref{LE_entire} dealing with the description of the uniformly positive entire solutions of the sub-system $(u,w)$,
obtained from \eqref{u-eq}-\eqref{w-eq} with~$v=0$.

Due to Lemma \ref{LE-weak-strong}, to prove (i), it is sufficient to show that for any $c\in [0,c_v^{**} )$ there exists $\varepsilon(c)>0$ such that
for each $(\tilde u_0,\tilde v_0,\tilde w_0)\in \omega_0(0,c_v^{**})$ with $\tilde v_0\not\equiv 0$ one has
\begin{equation}\label{az-ter}
\limsup_{t\to +\infty} \tilde v(t,ct)\geq \varepsilon(c),
\end{equation}
To prove \eqref{az-ter} we argue by contradiction by fixing $c\in [0,c_v^{**})$ and assuming that there exist
a sequence $\tilde U_{0,n}=(\tilde u_{0,n},\tilde v_{0,n},\tilde w_{0,n})\in \omega_0(0,c_v^{**})$ with $\tilde v_{0,n}\not\equiv 0$ for all $n\geq 1$,
and a sequence $t_n \to +\infty$ such that the corresponding solution $(\tilde u_{n},\tilde v_{n},\tilde w_{n})$ satisfies
\begin{equation}\label{pa}
\tilde v_n( t + t_n,c(t  + t_n))\leq \frac{2}{n},\;\forall n\geq 1,\;\forall t\geq 0.
\end{equation}
We now consider the sequence of functions $\{(u_n,v_n,w_n)\}$ defined by
$$
(u_n,v_n,w_n)(t,x):=(\tilde u_n,\tilde v_n,\tilde w_n)(t+t_n,x+c(t+t_n)).
$$
Then we claim that:
\begin{claim}\label{claim}
Up to increasing $t_n$, the sequence $\{(u_n,v_n,w_n)\}$ satisfies
\begin{equation*}
\lim_{n\to +\infty}(u_n,v_n,w_n)(t,x)=\left(\tilde p,0,\tilde q\right),
\end{equation*}
uniformly for $t\geq 0$ and locally uniformly for $x\in\R$.
\end{claim}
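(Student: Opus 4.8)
The plan is to run a single compactness–contradiction argument built on two ingredients already at hand: the persistence estimate of Proposition~\ref{PROP} for $u+v$, used with a lower bound that is uniform over initial data, and the Liouville classification of the two-dimensional sub-system in Lemma~\ref{LE_entire}.

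First I would set up the lower bound for $u_n$. By construction $v_n(t,0)=\tilde v_n(t+t_n,c(t+t_n))\le 2/n$ for every $t\ge 0$. Since $\tilde v_{0,n}\not\equiv 0$ one has $\tilde u_{0,n}+\tilde v_{0,n}\not\equiv 0$, so Proposition~\ref{PROP}, applied to $(\tilde u_n,\tilde v_n,\tilde w_n)$ with a fixed speed $c'\in(c,c_v^*)$ (available because $c<c_v^{**}<c_v^*$), provides a time $\tau_n$ with $(\tilde u_n+\tilde v_n)(\tau,y)\ge\delta_0$ whenever $\tau\ge\tau_n$ and $|y|\le c'\tau$. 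Here $\delta_0>0$ does not depend on $n$: the contradiction argument proving Proposition~\ref{PROP} (through Lemmas~\ref{LE-weak-strong} and~\ref{LE-eig}) uses only membership in $X_0$ and the a priori bounds of Proposition~\ref{prop:prelim_estim}, so the resulting lower bound depends on the parameters and the speeds alone. Unwinding the definition of $(u_n,v_n,w_n)$, this gives $(u_n+v_n)(t,x)\ge\delta_0$ for all $t\ge-(t_n-\tau_n)$ and all $x$ with $|x+c(t+t_n)|\le c'(t+t_n)$; in particular $u_n+v_n\ge\delta_0$ on $[0,+\infty)\times[-(c'+c)t_n,(c'-c)t_n]$.

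Next I would enlarge $t_n$ — which preserves $v_n(t,0)\le 2/n$ for $t\ge 0$ — so that $t_n\to+\infty$ and $t_n-\tau_n\to+\infty$, and then argue by contradiction. If the claim failed, there would be $R,\varepsilon>0$, a subsequence $n_k\to\infty$, times $\theta_k\ge 0$ and points $|y_k|\le R$ with $\|(u_{n_k},v_{n_k},w_{n_k})(\theta_k,y_k)-(\tilde p,0,\tilde q)\|\ge\varepsilon$. Extracting so that $y_k\to y_\infty\in[-R,R]$, parabolic estimates and the a priori bounds would give that $(u_{n_k},v_{n_k},w_{n_k})(\cdot+\theta_k,\cdot+y_k)$ converges in $C^{1,2}_{loc}(\R\times\R)$ to an entire solution $(U,V,W)$ of~\eqref{u-eq}--\eqref{w-eq} with $\mu=0$. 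From $v_{n_k}(\theta_k+t,0)\le 2/n_k\to 0$ and the strong maximum principle, $V\equiv 0$, so $(U,W)$ solves the sub-system of Lemma~\ref{LE_entire} with $\beta\le W\le 1$; since $\theta_k+t_{n_k}\to+\infty$ and $t_{n_k}-\tau_{n_k}\to+\infty$, the translated lower-bound region exhausts $\R\times\R$, whence $U\ge\delta_0>0$ everywhere. Lemma~\ref{LE_entire} then forces $(U,W)\equiv(\tilde p,\tilde q)$, hence $(U,V,W)\equiv(\tilde p,0,\tilde q)$, and evaluating at the origin contradicts $\|(u_{n_k},v_{n_k},w_{n_k})(\theta_k,y_k)-(\tilde p,0,\tilde q)\|\ge\varepsilon$. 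This proves the claim.

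The main obstacle should be precisely the uniformity of this lower bound: one must make sure that Proposition~\ref{PROP} yields a positive constant $\delta_0$ that does not degenerate along the sequence (otherwise $U$ could collapse to $0$ and Lemma~\ref{LE_entire} would be inapplicable), and one must enlarge the shifts $t_n$ by exactly the right amount so that the estimate $(u_n+v_n)\ge\delta_0$, which a priori holds only for $t\ge-(t_n-\tau_n)$, survives on all of $\R\times\R$ in the limiting entire solution. The subcase where the times $\theta_k$ stay bounded needs no separate treatment, since the total time-shift $\theta_k+t_{n_k}$ still tends to infinity.
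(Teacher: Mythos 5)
Your overall strategy is the same as the paper's: obtain a uniform lower bound on $u_n+v_n$ from Proposition~\ref{PROP}, pass to an entire limit solution, use the strong maximum principle to kill the $v$-component, invoke the Liouville-type Lemma~\ref{LE_entire}, and then bootstrap to uniform convergence for $t\ge 0$. Your contradiction argument for the upgrade to uniformity in $t$ is a correct and slightly more explicit version of the paper's ``using similar arguments.'' That part is fine.

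There is, however, a genuine gap in the step you yourself single out as the main obstacle, namely the uniformity in $n$ of the constant $\delta_0$. You justify it by asserting that the contradiction argument behind Proposition~\ref{PROP} ``uses only membership in $X_0$ and the a priori bounds, so the resulting lower bound depends on the parameters and the speeds alone.'' This inference does not hold. What the proof of Proposition~\ref{PROP} (through Lemma~\ref{LE-weak-strong}) makes universal is the constant $\varepsilon(c)$ appearing in the \emph{hypothesis}~\eqref{LE-hyp}; the \emph{conclusion}~\eqref{inf} is established by ruling out the event $\liminf=0$ and therefore provides a positive $\liminf$ with no quantitative control --- it can a priori depend on the particular solution, and in particular degenerate along a sequence of initial data in $X_0$. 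Applying Proposition~\ref{PROP} separately to each $(\tilde u_n,\tilde v_n,\tilde w_n)$ thus produces constants $\delta_{0,n}>0$ that you have not shown to be bounded away from zero, and if $\delta_{0,n}\to 0$ the limit $U$ could collapse to $0$ somewhere and Lemma~\ref{LE_entire} would no longer apply, exactly the failure mode you warned against.

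The correct source of uniformity, which the paper exploits implicitly in the phrase ``up to increasing $t_n$,'' is that all the $\tilde U_{0,n}$ belong to $\omega_0(0,c_v^{**})$ of the \emph{single} original solution $(u,v,w)$. Apply Proposition~\ref{PROP} once to $(u,v,w)$ with a speed $c'\in(c_v^{**},c_v^*)$ to obtain $\varepsilon_0>0$ and $T_0$ with $(u+v)(\tau,y)\ge\varepsilon_0$ whenever $\tau\ge T_0$ and $|y|\le c'\tau$. Since each $\tilde U_{0,n}$ is a locally uniform limit of shifts $(u,v,w)(s_k,\cdot+c_ks_k)$ with $s_k\ge 0$ and $c_k\in[0,c_v^{**}]$, and since for any $c''<c'$ one has $c''t+c_v^{**}s_k\le c'(t+s_k)$ for all $t,s_k\ge 0$, continuity of the flow yields $(\tilde u_n+\tilde v_n)(t,x)\ge\varepsilon_0$ for all $t\ge T_0$, $|x|\le c''t$, with $\varepsilon_0$ and $T_0$ independent of $n$. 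Replacing your $\delta_0$ and $\tau_n$ by this $\varepsilon_0$ and $T_0$ repairs the argument, and the remainder of your proof then goes through as written.
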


For Claim~\ref{claim}, by standard parabolic estimates, up to a sub-sequence not relabelled, one may assume that $(u_n,v_n,w_n)$ converges to an entire in time solution $(u_\infty, v_\infty, w_\infty)$.
Recall also that $(u_n,v_n,w_n) (0,\cdot) \in X_0$ thanks to Proposition~\ref{prop:prelim_estim}. Up to increasing $t_n$ and since $c<c_v^{**}$, Proposition \ref{PROP} ensures that there exists $\varepsilon>0$ such that
\begin{equation*}
u_\infty (t,x) + v_\infty (t,x) \geq \varepsilon,\;\forall (t,x) \in\R^2,
\end{equation*}
while \eqref{pa} together with the strong maximum principle imply that $v_\infty(t,x)\equiv 0$.

As a consequence, each limit function $(u_\infty,0,w_\infty)$ of $(u_n,v_n,w_n)$ for the open compact topology satisfies
\begin{equation*}
\varepsilon\leq u_\infty(t,x)\leq a-1,\;\beta\leq w_\infty(t,x)\leq 1,\;\forall (t,x)\in\R^2.
\end{equation*}
Now since $(u_\infty,w_\infty)(t,x-ct)$ is an entire solution of the sub-system obtained from~\eqref{u-eq}-\eqref{w-eq} with~$v=0$, Lemma \ref{LE_entire} ensures that
$$
(u_\infty,w_\infty)(t,x)\equiv (\tilde p,\tilde q).
$$
To sum-up the above analysis, we have obtained
\begin{equation*}
\lim_{n\to +\infty}(u_n,v_n,w_n)(t,x)=\left(\tilde p,0,\tilde q\right)\text{ locally uniformly for $(t,x)\in \R\times\R$}.
\end{equation*}
Now due to the uniform estimate \eqref{pa} for $t\geq 0$, one obtains using similar arguments that the convergence is uniform for $t\geq 0$ and uniform on the compact sets for $x\in\R$.
Then Claim~\ref{claim} is proved.
%

Equipped with the above claim, we now complete the proof of Proposition~\ref{PROP1}. To that aim and recalling that $c<c_v^{**}$ we fix $\delta>0$ small enough such that
\begin{equation*}
r_2(-1-h\tilde p+a\tilde q-\delta)-\frac{c^2}{4d_2}>0,
\end{equation*}
and $R>0$ large enough such that
\begin{equation*}
-\lambda:=r_2(-1-h\tilde p+a\tilde q-\delta)-\frac{c^2}{4d_2}-\frac{d\pi^2}{4R^2}>0.
\end{equation*}
Using Claim~\ref{claim}, let us fix $n$ large enough such that for all $t\geq 0$ and $x\in [-R,R]$
\begin{equation*}
r_2(-1-hu_n(t,x)-v_n(t,x)+a w_n(t,x))\geq r_2(-1-h\tilde p+a\tilde q-\delta).
\end{equation*}
As in the proof of Proposition~\ref{PROP}, we apply the parabolic comparison principle to obtain that there exists $\epsilon>0$ such that
\begin{equation*}
v_n (t,x) \geq \epsilon e^{-\lambda t}\varphi (x),\;\forall t\geq 0,\, \forall x\in [-R,R],
\end{equation*}
wherein $\varphi$ denotes a positive principal eigenfunction of
\begin{equation*}
\begin{cases}
-d_2\varphi''(x) -c\varphi'(x) - r_2(-1-h\tilde p+a\tilde q-\delta)\varphi (x) =\lambda\varphi(x),\;x\in (-R,R),\\
\varphi(\pm R)=0\text{ and }\varphi>0\text{ on }(-R,R).
\end{cases}
\end{equation*}
Here again, since $\lambda < 0$, this lower estimate contradicts the boundedness of the function~$v_n$. This completes the proof of the proposition. \end{proof}

Finally, part~(iii) of Theorem \ref{THEO-inside1} directly follows from the above proposition coupled with Lemma~\ref{LE-entire-full}. This ends the proof of Theorem \ref{THEO-inside1}.
Furthermore, as an additional corollary of Proposition~\ref{PROP1} coupled with Lemmas~\ref{LE_entire} and~\ref{LE-entire-full}, we also obtain the following important result which shall be crucially used in the next sections.
\begin{corollary}\label{COR-conv}
Let $(u_0,v_0,w_0)\in X_0$ be given and $(u,v,w)$ be the corresponding solution of \eqref{u-eq}-\eqref{w-eq}.
\begin{itemize}
\item[(i)] If $u_0\not\equiv 0$ and $v_0 \equiv 0$ then
\begin{equation*}
\lim_{t\to+\infty} (u,v,w)(t,x)=(\tilde p,0,\tilde q)\text{ locally uniformly for $x\in\R$}.
\end{equation*}
\item[(ii)] If $u_0\not\equiv 0$ and $v_0\not\equiv 0$ then
\begin{equation*}
\lim_{t\to+\infty} (u,v,w)(t,x)=(u^*,v^*,w^*)\text{ locally uniformly for $x\in\R$}.
\end{equation*}
\end{itemize}
Herein $(\tilde p,0,\tilde q)$ and $(u^*,v^*,w^*)$ are respectively defined in \eqref{def-pq} and \eqref{def-equi}.
As a special case, as soon as $u_0\not\equiv 0$ then the solution satisfies
\begin{equation*}
\lim_{t\to+\infty} u(t,0)\geq \min\left(\tilde p, u^*\right).
\end{equation*}
\end{corollary}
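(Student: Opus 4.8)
The plan is to derive both cases from the same template: a compactness argument for the time-shifted orbits, combined with the Liouville-type results of Section~\ref{sec:lyapunov}. Consider first case~(i). Since $v_0\equiv 0$ and $\mu=0$, the strong maximum principle (as recalled just after~\eqref{X0}) forces $v\equiv 0$ on all of $\R^2$, so that $(u,w)$ is an entire solution of the two-dimensional subsystem governed by $F(\cdot,0,\cdot)$ and $H(\cdot,0,\cdot)$, to which Lemma~\ref{LE_entire} applies. It therefore suffices to establish the uniform-in-time positivity bound $\liminf_{t\to+\infty}\inf_{|x|\le R}u(t,x)>0$ for every fixed $R>0$. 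This is immediate from part~(ii) of Proposition~\ref{PROP1} applied with any fixed $c\in(0,\min(c_u^{**},c_v^*))$: for $t$ large one has $R\le ct$, so the $\liminf\inf_{|x|\le ct}$ estimate yields the claim on $[-R,R]$. Together with the a priori bounds $0<\beta\le w\le 1$ and $0\le u\le a-1$ from Proposition~\ref{prop:prelim_estim}, this gives the required two-sided control.

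Next I would argue by contradiction for the locally uniform convergence. Suppose it fails; then there are $R>0$, $\varepsilon>0$, a sequence $t_n\to+\infty$ and points $x_n\in[-R,R]$, which we may assume converge to some $x_\infty\in[-R,R]$, with $\|(u,v,w)(t_n,x_n)-(\tilde p,0,\tilde q)\|\ge\varepsilon$. By standard parabolic estimates and the uniform bounds, along a subsequence the shifts $(u,v,w)(t+t_n,\cdot)$ converge locally uniformly in $(t,x)\in\R^2$ to an entire solution $(u_\infty,v_\infty,w_\infty)$, necessarily with $v_\infty\equiv 0$. By the positivity bound of the previous paragraph, together with $w_\infty\ge\beta$ and the upper bounds, the pair $(u_\infty,w_\infty)$ is bounded between two positive constants, so Lemma~\ref{LE_entire} forces $(u_\infty,w_\infty)\equiv(\tilde p,\tilde q)$. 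Passing to the limit in $(u,v,w)(t_n,x_n)$ (using $x_n\to x_\infty$ and the locally uniform convergence) gives $\|(u_\infty,v_\infty,w_\infty)(0,x_\infty)-(\tilde p,0,\tilde q)\|\ge\varepsilon$, a contradiction.

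Case~(ii) follows the same scheme, now with both predators present. Since $u_0\not\equiv 0$ and $v_0\not\equiv 0$, part~(i) of Proposition~\ref{PROP1} (with $c\in(0,c_v^{**})$) and part~(ii) (with $c\in(0,\min(c_u^{**},c_v^*))$) together yield $\liminf_{t\to+\infty}\inf_{|x|\le R}\min\{u(t,x),v(t,x)\}>0$ for every $R>0$. Any subsequential entire limit $(u_\infty,v_\infty,w_\infty)$ of the time shifts is then bounded between positive constants, so Lemma~\ref{LE-entire-full} forces $(u_\infty,v_\infty,w_\infty)\equiv(u^*,v^*,w^*)$, and the same contradiction argument closes the proof. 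Finally, the concluding inequality $\lim_{t\to+\infty}u(t,0)\ge\min(\tilde p,u^*)$ is immediate: if $v_0\equiv 0$ then by case~(i) one has $u(t,0)\to\tilde p$, while if $v_0\not\equiv 0$ then by case~(ii) one has $u(t,0)\to u^*$; in both cases the limit is at least $\min(\tilde p,u^*)$.

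The argument is not deep, but the one point requiring care is the passage from ``$u$ is eventually positive on compacts'' to ``every entire limit of the time shifts is bounded below by a fixed positive constant''. This is exactly why one needs the uniform lower bounds of Proposition~\ref{PROP1} in the $\liminf_{t\to+\infty}\inf_{|x|\le ct}$ form, rather than a mere pointwise-in-$x$ statement; with the pointwise version one could not rule out that the infimum of a limiting solution over a compact set is zero. Everything else is the routine compactness-plus-Liouville packaging already employed in Sections~\ref{sec:prelim}--\ref{sec:competitors1}.
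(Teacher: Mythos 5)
Your proposal is correct and follows essentially the same route the paper indicates, namely combining the uniform lower bounds of Proposition~\ref{PROP1} with the Liouville-type Lemmas~\ref{LE_entire} and~\ref{LE-entire-full} via a compactness/contradiction argument on time-shifted orbits. The only cosmetic quibble is that $(u,w)$ itself is not an entire solution (it lives on $t\geq 0$); the Liouville lemma is applied, as you then correctly do, to the entire-in-time limits of the shifts $(u,v,w)(\cdot+t_n,\cdot)$ rather than to $(u,w)$ directly.
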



\section{Two competitors: proof of Theorem \ref{inside}}\label{sec:competitors2}

In this section we complete the proof of Theorem \ref{inside}. Here recall that $\mu=0$ and we consider the case $d:= d_1=d_2$ and $r:= r_1=r_2 $, so that $c^* := c_u^*=c_v^*$.

We first introduce the following function
\beaa
U:=u-\kappa v,\; \kappa=\frac{1-k}{1-h}.
\eeaa
It is easy to check that $U$ satisfies
\be\label{U-eq}
U_t=dU_{xx}+r\{-1+aw-[(2-h)\kappa+k]v-U\}U,\; t>0,\, x\in\R.
\ee
From \eqref{U-eq} and the maximum principle, we have

\begin{lemma}\label{p-est}
Assume that $d_1=d_2$ and $r_1=r_2$.
Suppose that $u_0\le \kappa v_0$ in $\R$.
Then $u(t,x)\le \kappa v(t,x)$ for all $t>0$, $x\in\R$.
\end{lemma}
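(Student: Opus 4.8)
The plan is to derive a scalar reaction-diffusion inequality for $U = u - \kappa v$ and then invoke a comparison argument. Observe that since $d_1 = d_2 = d$ and $r_1 = r_2 = r$, subtracting $\kappa$ times equation~\eqref{v-eq} (with $\mu = 0$) from equation~\eqref{u-eq} gives, after collecting the linear and nonlinear terms in $u$ and $v$, precisely the equation~\eqref{U-eq}, namely
\[
U_t = d U_{xx} + r\left\{-1 + aw - [(2-h)\kappa + k] v - U\right\} U, \quad t>0,\ x\in\R.
\]
The point is that the bracketed coefficient multiplying $U$ in the reaction term is a bounded continuous function of $(t,x)$ — this follows from Proposition~\ref{prop:prelim_estim}, which guarantees $0 \le u, v \le a-1$ and $\beta \le w \le 1$ for all $t>0$, so $aw$, $v$, and $U$ are all uniformly bounded. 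Write this coefficient as $c(t,x) := r\{-1 + aw - [(2-h)\kappa+k]v - U\}$, so that $U$ solves the \emph{linear} (in $U$, with given bounded coefficient) parabolic equation $U_t = d U_{xx} + c(t,x) U$.

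The key step is then: the hypothesis $u_0 \le \kappa v_0$ on $\R$ means exactly $U(0,\cdot) \le 0$ on $\R$. Since $U$ satisfies a linear parabolic equation with bounded coefficient and bounded initial datum, the comparison principle (equivalently, the maximum principle for the linear operator $\partial_t - d\partial_{xx} - c(t,x)$ on the unbounded domain, which applies because $U$ is bounded — indeed $|U| \le (a-1)(1+\kappa)$) forces $U(t,x) \le 0$ for all $t>0$, $x\in\R$. The constant function $0$ is a solution of the same linear equation dominating the initial data, hence it dominates $U$ for all later times. This yields $u(t,x) \le \kappa v(t,x)$ for all $t>0$, $x\in\R$, as claimed.

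I do not anticipate a serious obstacle here; the only point requiring a little care is justifying the maximum principle on the whole line rather than a bounded interval, which is standard once one knows $U$ is bounded (one may, if desired, use the classical Phragmén–Lindelöf-type comparison principle for bounded solutions of linear parabolic equations on $\R$, or approximate by bounded domains with a suitable barrier). One should also note that the symmetric statement — if $u_0 \ge \kappa v_0$ then $u \ge \kappa v$ for all positive times — follows identically, and is in fact the version needed for the first part of Theorem~\ref{inside}; both are immediate from the same linear equation for $U$.
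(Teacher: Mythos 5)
Your argument is the same as the paper's: derive the scalar equation~\eqref{U-eq} for $U=u-\kappa v$, note $U(0,\cdot)\le 0$, and conclude $U\le 0$ by the parabolic maximum principle. You simply spell out the boundedness of the coefficient and of $U$ needed to justify the Phragm\'en--Lindel\"of-type comparison on $\R$, which the paper leaves implicit.
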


\begin{proof}
Suppose that $u_0\le\kappa v_0$ in $\R$. Then $U(0,x)\le 0$ for all $x\in\R$. It follows from the maximum principle and \eqref{U-eq} that $U\le 0$ for all $t>0$.
Hence the lemma is proved.
\end{proof}

As a corollary, we obtain the following exact spreading speed of $v$.

\begin{corollary}\label{v-sp1}
Assume that $d_1=d_2$ and $r_1=r_2$. Suppose that $u_0\le\kappa v_0$ in $\R$.
Then $v$ spreads at the speed $c_v^*=c^*_u$.
\end{corollary}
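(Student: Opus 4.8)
The plan is to deduce the exact spreading speed of $v$ from the combination of the general upper bound of Theorem~\ref{theo:spread_plusplus}(i), the inequality $u \le \kappa v$ provided by Lemma~\ref{p-est}, and a sub-solution argument built on the scalar equation satisfied (approximately) by $v$. First I would recall that, by Theorem~\ref{theo:spread_plusplus}(i) (equivalently, the upper estimates already noted in Theorem~\ref{theo:spread_plus}), for any $c > c_v^*$ one has $\sup_{|x|\ge ct} v(t,x) \to 0$ as $t\to\infty$; since here $c_v^* = c_u^* = c^*$, this gives the ``spreads at most at speed $c^*$'' half, and it also forces $w \to 1$ beyond speed $c^*$. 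So the content of the corollary is the matching lower bound: for every $c \in [0,c^*)$, $\liminf_{t\to\infty}\inf_{|x|\le ct} v(t,x) > 0$.

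For the lower bound I would use Lemma~\ref{p-est}: since $u_0 \le \kappa v_0$, we have $u(t,x) \le \kappa v(t,x)$ for all $t>0$, hence from the $v$-equation
\[
v_t = d\, v_{xx} + r\, v\,(-1 - h u - v + a w) \ge d\, v_{xx} + r\, v\,\bigl(-1 - (h\kappa + 1) v + a w\bigr),
\]
using $u \le \kappa v$ and $h, \kappa > 0$. The key point is then to control $w$ from below on the relevant moving-frame region. I would invoke the persistence machinery: by Proposition~\ref{PROP}, $u+v$ spreads at least with speed $c_v^* = c^*$ in the weak sense, and then Lemma~\ref{LE-weak-strong} (applied with $\zeta = \xi = 1$) yields $\limsup_{t}\sup_{|x|\le ct} w(t,x) < 1$ for each $c < c^*$; combined with the lower bound $w \ge \beta$ from Proposition~\ref{prop:prelim_estim} and a limiting/omega-limit argument in the frame of speed $c$, one obtains that along any moving frame of speed $c<c^*$ the coefficient $-1 - (h\kappa+1)v + a w$ stays bounded below. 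The cleanest route is to run the same contradiction scheme as in the proof of Proposition~\ref{PROP1}: suppose $v(t_n, c t_n) \to 0$ along some sequence; pass to an entire limit solution $(u_\infty, v_\infty, w_\infty)$ in the frame of speed $c$; note $v_\infty \equiv 0$ by the strong maximum principle, hence (since $u \le \kappa v$) also $u_\infty \equiv 0$, so $w_\infty$ solves the scalar Fisher--KPP equation with $w_\infty \ge \beta$, forcing $w_\infty \equiv 1$; but then on a large ball $(-R,R)$ in that frame, for $n$ large, $v_n$ is a supersolution of $v_t = d\, v_{xx} + c\, v_x + r\, v\,(a - 1 - \delta)$, and since $c < c^* = 2\sqrt{dr(a-1)}$ the associated Dirichlet principal eigenvalue $\lambda_R = r(a-1-\delta) + \tfrac{c^2}{4d} + \tfrac{d\pi^2}{4R^2}$ (Lemma~\ref{LE-eig}) can be made negative by choosing $\delta$ small and $R$ large, giving an exponentially growing sub-solution $\epsilon e^{-\lambda_R t}\varphi(x)$ that contradicts the boundedness of $v_n$. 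Feeding this ``weak persistence'' estimate back into Lemma~\ref{LE-weak-strong} with $\zeta = 0$, $\xi = 1$, $c_2 = 0$, $c_1 = c^*$ upgrades it to the desired $\liminf\inf_{|x|\le ct} v(t,x) > 0$ for all $c < c^*$. A symmetric argument handles $x \le 0$.

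The main obstacle is that $v$ does not satisfy a closed scalar equation and the full system has no comparison principle: the bound $-1 - (h\kappa+1)v + aw \ge a - 1 - \delta$ is only available \emph{after} one knows that, in the moving frame of speed $c$, both predator components are uniformly small and $w$ is uniformly close to $1$ near the front. Establishing this uniform-in-time closeness is exactly where the omega-limit/strong-maximum-principle argument from Proposition~\ref{PROP1} and Lemma~\ref{LE-weak-strong} is needed, and the inequality $u \le \kappa v$ from Lemma~\ref{p-est} is what makes $v_\infty \equiv 0$ propagate to $u_\infty \equiv 0$ so that the limiting $w_\infty$ equation decouples. Once that is in place, the eigenvalue computation of Lemma~\ref{LE-eig} and the persistence lemma do the rest routinely, and together with the upper bound from Theorem~\ref{theo:spread_plusplus} we conclude that $v$ spreads at the exact speed $c_v^* = c_u^* = c^*$.
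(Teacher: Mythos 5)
Your proposal is correct, but it takes a genuinely different (and considerably longer) route than the paper. The paper's own proof is two lines: once Lemma~\ref{p-est} gives $u(t,x)\le \kappa v(t,x)$ for all $t>0$, one simply observes that $u+v \le (1+\kappa)\,v$, i.e.\ $v \ge \tfrac{1}{1+\kappa}(u+v)$; combining this pointwise inequality with the already-proved lower spreading bound on $u+v$ (part~(i) of Theorem~\ref{THEO-inside1}, equivalently Proposition~\ref{PROP}) yields $\liminf_{t\to\infty}\inf_{|x|\le ct} v(t,x) \ge \tfrac{\varepsilon_0}{1+\kappa}>0$ for every $c<c_v^*=c_u^*$, and the matching upper bound is \eqref{v-beyond}. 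There is no need to re-run the omega-limit/sub-solution machinery for $v$ alone.

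What you propose instead is to re-derive the weak persistence of $v$ from scratch by the contradiction scheme of Proposition~\ref{PROP1}: pass to an entire limit in the moving frame, use $u\le\kappa v$ to push $v_\infty\equiv 0$ to $u_\infty\equiv 0$ and hence $w_\infty\equiv 1$, then get a contradiction from the Dirichlet eigenvalue of Lemma~\ref{LE-eig} (negative since $c<c^*=2\sqrt{dr(a-1)}$), and finally upgrade via Lemma~\ref{LE-weak-strong} with $\zeta=0$, $\xi=1$. This is sound: the inequality $u\le\kappa v$ is indeed preserved under the locally uniform limits defining $\omega_0$, so the decoupling of the $w_\infty$ equation is legitimate, and the rest is a faithful reprise of the Proposition~\ref{PROP1}/Lemma~\ref{LE-weak-strong} mechanism. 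The trade-off is that your route duplicates work: Proposition~\ref{PROP} already contains exactly the eigenvalue/strong-maximum-principle argument you redo, but applied to $u+v$, and the ordering $u\le\kappa v$ lets one transfer that conclusion to $v$ purely algebraically. So the key insight you have (that $u\le\kappa v$ is the lever) is the right one; the paper just applies the lever one step earlier, to the conclusion of Proposition~\ref{PROP}, rather than inside a new contradiction argument. Be slightly careful with phrasing if you keep your route: the hypothesis of Lemma~\ref{LE-weak-strong} is a uniform-in-large-time smallness for a sequence of orbit-closure data, not merely $v(t_n,ct_n)\to 0$ along one sequence of times, and this uniformity is what lets the strong maximum principle force $v_\infty\equiv 0$ for all $t\ge 0$.
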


\begin{proof}
For any $c\in[0,c^*_v)$, it follows from Lemma~\ref{p-est} that
\beaa
\liminf_{t \to +\infty} \inf_{|x|\le ct} v(t,x)\ge\frac{1}{1+\kappa}\liminf_{t \to +\infty} \inf_{|x|\le ct}  (u(t,x) + v(t,x) ).
\eeaa
Hence due to part (i) of Theorem \ref{THEO-inside1} there exists $\varepsilon_0>0$ such that
$$
\liminf_{t \to +\infty} \inf_{|x|\le ct} (u(t,x) + v(t,x) )\geq \varepsilon_0,
$$
so that
\beaa
\liminf_{t \to +\infty} \inf_{|x|\le ct} v(t,x)\ge\frac{1}{1+\kappa}\liminf_{t \to +\infty} \inf_{|x|\le ct} (u(t,x) + v(t,x))\ge\frac{\varepsilon_0}{1+\kappa}>0.
\eeaa
The corollary follows by combining this with \eqref{v-beyond}.
\end{proof}

Alternatively, if $u_0\ge\kappa v_0$ in $\R$, then $u$ spreads at the speed $c^*=c_u^*$, when $d_1=d_2$ and $r_1=r_2$.
{This completes the proof of Theorem~\ref{inside}.}




\section{Two competitors: the case when $c_u^* > c_v^*$}\label{competitors_faster}

In this section, we consider the `two competitors' case, i.e., $\mu = 0$ and we aim at proving Theorems~\ref{th:middle_zone} and~\ref{th-NP}.
We assume throughout this section that $c_u^* > c_v^*$, so that an intermediate zone may appear where the predator~$u$ invades ahead of the predator~$v$.

\subsection{Spreading of the faster predator}

This subsection is concerned with the proof of Theorem \ref{th:middle_zone}. The main step here is the following proposition, which together with results from previous sections shall immediately imply Theorem \ref{th:middle_zone}.
\begin{proposition}\label{PROP_inf}
Recall that $c_v^*<c_u^*$. Let $(u_0,v_0,w_0)\in X_0$ be such that $u_0\not \equiv  0$ and $v_0$ are both compactly supported. Then the corresponding solution $(u,v,w)$ satisfies for each $c_v^*<c_2<c_1<c_u^*$
\begin{equation*}
\lim_{t\to+\infty} \sup_{c_2t\leq x\leq c_1t}\left\{ |u(t,x)-\tilde p|+v(t,x)+|w(t,x)-\tilde q|\right\}=0,
\end{equation*}
and
\begin{equation}\label{u_inf}
\liminf_{t\to+\infty} \inf_{0\le x\leq c_1 t} u(t,x)>0.
\end{equation}
\end{proposition}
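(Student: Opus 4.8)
The plan is to exploit the fact that, for $x$ in a moving frame with speed $c \in (c_v^*, c_u^*)$, the component $v$ is forced to be asymptotically small (by Theorem~\ref{theo:spread_plus}, since $c > c_v^*$), while $u$ cannot go extinct in that same frame (since $c < c_u^*$). Once $v$ is negligible, the pair $(u,w)$ essentially solves the two-dimensional sub-system of Lemma~\ref{LE_entire}, whose only uniformly positive entire solution is $(\tilde p, \tilde q)$; this will pin down the limit. The first step is thus to establish the lower bound: for every $c \in (0, c_u^*)$,
\begin{equation*}
\liminf_{t\to+\infty}\inf_{0\le x\le ct} u(t,x) > 0,
\end{equation*}
which in particular gives \eqref{u_inf}. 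I would prove this with Lemma~\ref{LE-weak-strong} applied with $\zeta = 1$, $\xi = 0$, $c_2 = 0$, $c_1 = c_u^*$: it suffices to check the persistence hypothesis \eqref{LE-hyp}, i.e. that for $(\tilde u_0,\tilde v_0,\tilde w_0) \in \omega_0(0,c_u^*)$ with $\tilde u_0 \not\equiv 0$ one has $\limsup_{t\to\infty}\tilde u(t,ct) \ge \varepsilon(c)$. Arguing by contradiction exactly as in Proposition~\ref{PROP}: if $\tilde u_n(t+t_n, c(t+t_n)) \to 0$, then by parabolic estimates and the strong maximum principle the shifted solutions converge to an entire solution with $u_\infty \equiv 0$; but $w \ge \beta$ forces $w_\infty \equiv 1$ on the relevant set, and then near $x = ct$ we have $a w - k v - u - 1 \ge a - 1 - \delta$ for $n$ large, so $u_n$ is a supersolution of a linear equation whose principal Dirichlet eigenvalue on $(-R,R)$ is negative for $R$ large (using $c < c_u^*$ and Lemma~\ref{LE-eig}), forcing $u_n$ to grow unbounded — a contradiction with Proposition~\ref{prop:prelim_estim}.

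The second step is the convergence to $(\tilde p, 0, \tilde q)$ in the intermediate cone $c_2 t \le x \le c_1 t$ with $c_v^* < c_2 < c_1 < c_u^*$. Suppose not: there exist $t_n \to +\infty$ and $x_n \in [c_2 t_n, c_1 t_n]$ along which $|u(t_n,x_n) - \tilde p| + v(t_n,x_n) + |w(t_n,x_n) - \tilde q|$ stays bounded away from $0$. Write $c_n := x_n / t_n \in [c_2, c_1]$; up to a subsequence $c_n \to c_\infty \in [c_2,c_1] \subset (c_v^*, c_u^*)$. Translating, $(u,v,w)(t+t_n, x+x_n)$ converges (parabolic estimates) to an entire solution $(u_\infty, v_\infty, w_\infty)$. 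By Theorem~\ref{theo:spread_plus} and $c_\infty > c_v^*$ we get $v_\infty \equiv 0$ on $\mathbb{R}^2$. By the lower bound from Step~1 applied in frames of speed $c_\infty$ (more precisely, since $x_n/t_n \to c_\infty < c_u^*$, the points $(t_n, x_n)$ eventually lie in a cone $|x| \le c' t$ with $c_\infty < c' < c_u^*$, so $\liminf u(t_n, x_n) > 0$), together with the a priori bounds $u \le a-1$, $\beta \le w \le 1$, there exist $M > m > 0$ with $m \le u_\infty \le M$ and $m \le w_\infty \le M$ on $\mathbb{R}^2$. Then $(u_\infty, w_\infty)$ is a bounded, uniformly positive entire solution of the $v = 0$ sub-system, so Lemma~\ref{LE_entire} gives $(u_\infty, w_\infty) \equiv (\tilde p, \tilde q)$. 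Evaluating at $(0,0)$ contradicts the choice of $(t_n, x_n)$. This proves the uniform convergence on $c_2 t \le x \le c_1 t$.

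The step I expect to be the main obstacle is securing the uniform positivity $u_\infty \ge m > 0$ of the limit in Step~2 — i.e. making sure the lower bound from Step~1 really survives the passage to the limit. The subtlety is that Step~1 yields $\liminf_{t\to\infty}\inf_{|x|\le ct} u > 0$ only for $c$ strictly below $c_u^*$, and the points $x_n$ have speed $c_n$ which could a priori approach $c_u^*$; this is precisely why the hypothesis $c_1 < c_u^*$ is essential, as it confines $c_n$ to $[c_2, c_1]$ with $c_1 < c_u^*$, so one can fix any $c' \in (c_1, c_u^*)$ and apply Step~1 with that $c'$ to conclude $u(t_n, x_n) \ge \inf_{|x| \le c' t_n} u(t_n, \cdot) \ge m > 0$ for $n$ large. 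Once this uniform lower bound is in hand the rest is a routine application of the Liouville-type Lemma~\ref{LE_entire}. A minor additional point: one should verify that the convergence to $(\tilde p, 0, \tilde q)$ is genuinely uniform over the whole moving band and not just along subsequences, which follows from the standard compactness/contradiction packaging already used in the proof of Claim~\ref{claim}.
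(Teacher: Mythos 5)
Your Step~2 (convergence to $(\tilde p,0,\tilde q)$ in the band $c_2 t \le x \le c_1 t$) is essentially the paper's argument: combine the decay of $v$ from Theorem~\ref{theo:spread_plus} with a positive lower bound on $u$ in the band, pass to entire limits by parabolic compactness, and invoke the Liouville-type Lemma~\ref{LE_entire}. That part is fine, but it leans on your Step~1, and Step~1 has a genuine gap.

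In Step~1 you apply Lemma~\ref{LE-weak-strong} with $c_2=0$, $c_1=c_u^*$, $\zeta=1$, $\xi=0$, and to verify the persistence hypothesis \eqref{LE-hyp} you pass to a limit $(u_\infty,v_\infty,w_\infty)$ with $u_\infty\equiv 0$ and then assert that ``$w\ge\beta$ forces $w_\infty\equiv 1$''. That step requires $v_\infty\equiv 0$ as well: if $v_\infty\not\equiv 0$ (hence $v_\infty>0$ by the strong maximum principle), $w_\infty$ solves $w_t=d_3w_{xx}+r_3w(1-bv_\infty-w)$, which does not force $w_\infty\equiv 1$ — for instance $(v_\infty,w_\infty)=(\tilde p,\tilde q)$ is admissible. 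And for speeds $c<c_v^{**}$ the $v$-component does persist (Proposition~\ref{PROP1}(i)), so elements of $\omega_0(0,c_u^*)$ genuinely have $\tilde v_0\not\equiv 0$. With $v_\infty$ near $\tilde p$ and $w_\infty$ near $\tilde q$, the effective linear growth rate in the $u$-equation is $r_1(-1-k\tilde p+a\tilde q)=r_1(a-1)(1-k)/(1+ab)$, whose critical speed is $c_u^{**}<c_u^*$; so the eigenvalue argument you sketch only yields persistence of $u$ up to $\min(c_u^{**},c_v^*)$ — which is precisely Proposition~\ref{PROP1}(ii) and not enough. The paper circumvents this by applying Lemma~\ref{LE-weak-strong} only on $[c_2,c_1]\subset(c_v^*,c_u^*)$ (there $v\to 0$ in every relevant frame by Theorem~\ref{theo:spread_plusplus}, so $v_\infty\equiv 0$ and hence $w_\infty\equiv 1$ is justified), obtaining your Step~2; and it then proves \eqref{u_inf} by a \emph{separate} three-time-sequence gluing argument modeled on the second step of the proof of Lemma~\ref{LE-weak-strong}, anchored by Corollary~\ref{COR-conv}: if $u(t_n,c_nt_n)\to 0$ with $c_n\in[0,c_1)$, one locates a time $t_n''$ where $u$ drops through the level $\frac{1}{2}\min(\tilde p,u^*)$ and extracts an entire-in-time limit whose $u$-component stays below that level for all $t\ge 0$ at $x=0$, contradicting the convergence guaranteed by Corollary~\ref{COR-conv}. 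Your proposal is missing this bridging mechanism, and without it the lower bound \eqref{u_inf} in the inner region $0\le x\le c_v^{**}t$ is not established.
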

\begin{proof}
To prove the first part of this proposition, we shall make use of Lemma~\ref{LE-weak-strong} while the proof of the second part shall follow from Corollary \ref{COR-conv} above.

We start with the intermediate zone where the fast predator~$u$ shall eventually persist and the slower one~$v$ goes to extinction, and we fix $c_v^* < c_2 < c_1 < c_u^*$.
We shall show that for each $c\in [c_2,c_1) $ there exists $\varepsilon(c)>0$ such that for any $(\tilde u_0,\tilde v_0,\tilde w_0)\in \omega_0(c_2,c_1)$ with $\tilde u_0 \not\equiv 0$,
the corresponding solution $(\tilde u , \tilde v ,\tilde w)$ satisfies
\begin{equation}\label{PROP_inf_eq1}
\limsup_{ t \to +\infty}   \tilde u(t,  ct) \geq \varepsilon(c).
\end{equation}

{First notice that, from the Definition~\ref{definition-omega0} of $\omega_0 (c_2, c_1)$, the function $\tilde{v}$ is either a finite time shift of $v$ (whose initial condition is compactly supported),
or by Theorem~\ref{theo:spread_plusplus} and the fact that $c_2 > c_v^*$, it satisfies $\tilde{v} \equiv 0$. In both cases, another use of Theorem~\ref{theo:spread_plusplus} ensures that,} for any $c > c_v^*$,
$$\lim_{ t \to +\infty} \sup_{x \geq ct } \tilde v(t,x) = 0.$$
As a consequence one in particular has
\begin{equation}\label{v-spread}
 \lim_{t \to +\infty} \tilde v (t,ct)=0,\;\;\forall c>c_v^*.
\end{equation}
Let us fix $c \in (c_v^*, c_u^*)$. We proceed again by contradiction to prove \eqref{PROP_inf_eq1} and assume that
$$\limsup_{t \to +\infty}\, \tilde u_n (t,ct)  \leq \frac{1}{n},$$
for a sequence of solutions $\{(\tilde u_n, \tilde v_n, \tilde w_n)\}$ associated with initial data $\{(\tilde u_{0,n} , \tilde v_{0,n} , \tilde w_{0,n})\}$ in $\omega_0(c_2,c_1)$,
where $\tilde u_{0,n}\not \equiv  0$ for any $n \geq 1$.
Then, for each $n \geq 1$, there exists $t_n$ large enough such that
$$\tilde u_n (t, ct)   \leq \frac{2}{n},\;\forall\, t \geq t_n.$$

Using~\eqref{v-spread}, we can increase~$t_n$ so that also $t_n \to +\infty$ and $\tilde v_n (t,ct) \leq \frac{2}{n}$ for all $t \geq t_n$.
In particular, passing to the limit as $n \to +\infty$ and applying a strong maximum principle, one may check that for any $R >0$,
\begin{equation}\label{claim_weakspread_v2}
\limsup_{n \to +\infty} \sup_{t \geq t_n , |x -ct | \leq R } ( \tilde u_n (t,x) + \tilde v_n (t,x) ) = 0.
\end{equation}
We then claim that, for any $R >0$,
\begin{equation}\label{claim_weakspread_w2}
\limsup_{n \to +\infty} \sup_{ t \geq t_n , |x - ct| \leq R} | \tilde w_n (t,x)  -1 | = 0.
\end{equation}
The proof is precisely the same as that of \eqref{claim_weakspread_w1}, so we omit it here.

Now, for any small $\delta >0$ and large $R>0$, we can take $n$ large enough so that, thanks to~\eqref{claim_weakspread_v2} and~\eqref{claim_weakspread_w2},
$$(\tilde u_n)_t \geq d_1 (\tilde u_n)_{xx} + r_1 \tilde u_n (a-1 -\delta ) ,$$
for all $t \geq t_n$ and $|x - ct_n | \leq R$. Proceeding as in the proof of Proposition~\ref{PROP1}, we find some $\epsilon >0$ such that
$$\tilde u_n (t,x + ct)  \geq \epsilon e^{-\lambda t} \varphi (x),$$
for all $t \geq t_n$ and $|x| \leq R$, where
$$ -\lambda = r_1 (a-1 - \delta) - \frac{c^2}{4d_1} - \frac{d\pi^2}{4 R^2} > 0,$$
and $\varphi$ is the corresponding positive principal eigenfunction from Lemma~\ref{LE-eig}. This contradicts Proposition~\ref{prop:prelim_estim} and the boundedness of the solution.
Thus we have proved~\eqref{PROP_inf_eq1} where $\varepsilon (c)$ does not depend on the initial data.

Next, applying Lemma~\ref{LE-weak-strong} with $\zeta = 1$ and $\xi =0$, we obtain that for each $c_v^*<c_2< c<c_1<c_u^*$,
\begin{equation*}
\liminf_{t\to+\infty} \inf_{c_2 t \leq x \leq ct} u(t,x)>0.
\end{equation*}
Recalling \eqref{v-beyond} from Theorem~\ref{theo:spread_plusplus} and Lemma \ref{LE_entire} this completes the first part of Proposition~\ref{PROP_inf}.

We now turn to the second part of the proposition, which shares some similarity with the second step of the proof of Lemma~\ref{LE-weak-strong}.
To do so let $c_1\in (c_v^*,c_u^*)$ be given, and assume by contradiction that there exist sequences $t_n \to +\infty$ and $c_n \in [0,c_1)$ such that
\begin{equation*}
\lim_{n\to +\infty} u(t_n, c_n t_n)=0.
\end{equation*}
Without loss of generality, up to a sub-sequence, we assume that $c_n \to c\in [0,c_1]$. Choose $c'$ such that $c_1 <c'<c_u^*$
and define the sequence
$$
t'_n := \frac{c_n t_n}{c'}\in [0,t_n),\;\forall n\geq 0.
$$

Consider first the case when the sequence $\{c_n t_n\}$ is bounded, which may happen only if $c=0$. Then up to extraction of a sub-sequence, one has as $n \to +\infty$ that
$$c_n t_n \to x_\infty \in \R,$$
and, due to the strong maximum principle,
$$
\lim_{n\to +\infty}u(t + t_n , x + c_n t_n )=0\text{ locally uniformly for $(t,x)\in\R^2$}.
$$
This implies in particular that $u(t_n,0) \to 0$ as $n \to +\infty$, which contradicts Corollary~\ref{COR-conv}.

Next, we consider the case when $\{c_nt_n\}$ has no bounded sub-sequence. In particular, we assume below that $t_n'\to + \infty$ as $n\to + \infty$.
Set $\varrho:=\min (u^*,\tilde p)$. Then due to the first part of Proposition~\ref{PROP_inf}, since $c'\in (c_v^*,c_u^*)$ we have for all large $n$ that
$$u(t_n ', c_n t_n) = u (t_n ', c' t_n ')> \frac{3}{4}\varrho.$$
Then we introduce a third time sequence $\{ t''_n \}$ with
$$ t''_n := \inf \left\{ t \leq  t_n \, | \ \forall s \in (t , t_n), \quad u (s, c_n t_n )  \leq \frac{ \varrho}{2} \right\} \in (t' _n, t_n).$$
Since $u(t_n , c_n t_n) \to 0$ as $n \to +\infty$, we get
$$u (t_n '' , c_n t_n) = \frac{\varrho}{2},$$
and, as before, by a limiting argument and a strong maximum principle, that
$$t_n - t_n '' \to +\infty,$$
as $n \to +\infty$.
Again, by parabolic estimates and up to extraction of a sub-sequence, we find that $(u,v,w)(t + t_n '' ,x + c_n t_n )$ converges to a solution $(u_\infty, v_\infty , w_\infty)$ of \eqref{u-eq}-\eqref{w-eq} that satisfies
\begin{equation*}
u_\infty(0,0)= \frac{\varrho}{2}\text{ and }
u_\infty(t,0)\leq  \frac{\varrho}{2},\;\forall t\geq 0.
\end{equation*}
Recalling the definition of $\varrho$ above, and noticing that $(u_\infty, v_\infty, w_\infty) (0,\cdot) \in X_0$, this again contradicts Corollary \ref{COR-conv} and completes the proof of Proposition~\ref{PROP_inf}.
\end{proof}
Parts (i) and (ii) of Theorem~\ref{th:middle_zone} follow immediately from Proposition~\ref{PROP_inf}, using here again a symmetry argument to handle negative $x$.
Finally, applying Propositions~\ref{PROP1} and~\ref{PROP_inf} coupled with Lemma~\ref{LE-entire-full}, we also obtain part~(iii) of Theorem~\ref{th:middle_zone}.


\subsection{Counter-example: nonlocal pulling}

The remaining question is the spreading speed of $v$.
From Theorem~\ref{th:middle_zone} and \eqref{c**}, we might wonder whether it is $c_v^{**}$. However, it is known that a `nonlocal pulling' phenomenon may occur in competition systems.
`Nonlocal pulling' here refers to the fact that the zone ahead of the point $c_u^* t$, where~$u$ is close to 0, may have an effect on the speed of its competitor~$v$.
This may be surprising because $c_u^*$ is strictly larger than $c_v^*$, thus strictly larger than the spreading speed of~$v$. We refer to~\cite{GL} for an example of such a situation in the two species competition system.

Now, we give a short proof that the spreading speed of $v$ may indeed be strictly larger than $c_v^{**}$ in our context of a predator-prey system with two predators.
We start by constructing a subsolution for the $v$-equation, moving at a speed larger than $c_v^{**}$. In what follows, we let $\varepsilon >0$ be arbitrarily small.

Due to Theorem~\ref{theo:spread_plus} and Proposition~\ref{prop:prelim_estim}, we can assume up to some shift in time that
$$w (t,x) \geq \underline{w} (t,x),\quad u (t,x) \leq \overline{u} (t,x),$$
where
\beaa
&&\underline{w} (t,x) := \left\{
\begin{array}{ll}
\beta & \mbox{ if } x < (c_u^* + \varepsilon) t,\\
\displaystyle  1-  \frac{\varepsilon}{a}  & \mbox{ if } x \geq (c_u^* + \varepsilon) t,
\end{array}
\right.\\
&&\overline{u} (t,x) := \left\{
\begin{array}{ll}
a-1 & \mbox{ if } x < (c_u^* + \varepsilon ) t,\\
\displaystyle \frac{\varepsilon}{h} & \mbox{ if } x \geq (c_u^*+ \varepsilon) t.
\end{array}
\right.
\eeaa
Then $v$ satisfies
$$v_t \geq d_2 v_{xx} + r_2 v (-1 - h \overline{u} -v + a\underline{w}).$$
Up to linearization around 0, this leads us to look for a subsolution of
$$v_t = d_2 v_{xx} + r_2 v (-1 - h \overline{u} - \varepsilon + a \underline{w}).$$
If we find such a (bounded) subsolution, then it is a subsolution of the previous nonlinear equation up to multiplying by a small enough constant.
Because of the discontinuity at $x =( c_u^* + \varepsilon) t$, we shall `glue' two ansatzes. For ease of notation, let us denote $c_\varepsilon := c_u^* + \varepsilon$.

Let us first deal with the ansatz in the moving frame with speed $c_\varepsilon$. We look for a solution of the above linear equation of the type
$$\underline{v_1} (t, x + c_\varepsilon t) := e^{-r t} e^{-\nu x} \sin (\omega x),$$
whose support is $(0, \pi / \omega)$, for some constants $r$, $\nu$ and $\omega$. In particular, it is included in the part where $\underline{w} = 1- {\varepsilon}/{a}$ and $\overline{u} = {\varepsilon}/{h}$.
Therefore, putting this into the linearized equation, we find that $r$, $\nu$ and $\omega$ satisfy the system
\beaa
\begin{cases}
c_\varepsilon - 2 d_2 \nu = 0,\\
- d_2 \omega^2 + d_2 \nu^2 - c_\varepsilon \nu + r_2 (a-1 - 3 \varepsilon) = -r.
\end{cases}
\eeaa
Thus
$$\nu = \frac{c_u^* + \varepsilon}{2 d_2}$$
and, taking $\omega$ very small, say $\omega = \varepsilon$, we find a compactly supported subsolution in the moving frame with speed $c_\varepsilon$ which converges to 0 exponentially in time at rate
\begin{equation}\label{nlp_plusplus}
{r} = \frac{c_\varepsilon^2}{4d_2} + d_2 \varepsilon^2 - r_2 (a-1 - 3 \varepsilon) >0.
\end{equation}
Here the positivity of $r$ comes from the fact that $c_\varepsilon \geq c_u^* > 2 \sqrt{d_2 r_2 (a-1)} = c_v^*$. More precisely, we have
$$\underline{v_1} (t, 1 + c_\varepsilon t) = C_\varepsilon  e^{-r t},$$
for some $C_\varepsilon >0$.

Now let us turn to the second ansatz. Here we look for a subsolution of the type
$$ \underline{v_2} := \max\{ 0, A  e^{-\lambda (x - c t)} - B e^{- (\lambda + \eta) (x-ct)} \},$$
where constants $A>0$, $B>0$, $\lambda>0$, $\eta>0$ and $c \in (c_v^{**} , c_u^*)$ are to be determined. It is enough to show that $\underline{v_2}$ satisfies
$$v_t \leq d_2 v_{xx} + r_2 v (a\beta -1 -h (a-1) - \varepsilon ).$$
This is a rather standard construction. We choose $\lambda$ as the smaller positive solution of
$$d_2 \lambda^2 - c \lambda + r_2 (a \beta - 1 - h (a-1) - \varepsilon) = 0, $$
i.e.,
$$\lambda= \lambda (c,\varepsilon) := \frac{c - \sqrt{c^2 - 4 d_2 r_2 (a\beta - 1 - h (a-1) - \varepsilon)}}{2 d_2}.$$
Notice that this is possible provided that
$$c >2 \sqrt{d_2 r_2 (a \beta - 1 -h (a-1) - \varepsilon)} >0 .$$
Thus we assume that this middle term is positive. Recalling that $\beta = 1 - 2 (a-1) b$, $a>1$ and as $\varepsilon$ can be arbitrarily small, this rewrites as
\begin{equation}\label{cond_nlp}
{1 - 2 ab - h >0.}
\end{equation}
{This is true if, for instance, $h$ and $b$ are small.} In particular, it is compatible with our other assumptions in \eqref{c1}. Taking $\eta$ small enough, we get that
$$d_2 (\lambda + \eta)^2 - c (\lambda +\eta)+ r_2 (a \beta - 1 - h (a-1) - \varepsilon) < 0. $$
Then $\underline{v_2}$ can be checked to be a subsolution, with an appropriate choice of $A,B>0$.

Finally, we want to glue the two ansatzes to find a compactly supported subsolution. On the one hand, $\underline{v_2}$ has finite support to the left.
On the other hand, $\underline{v_1}$ has compact support by definition. Therefore, it is possible to glue them into a subsolution, at least for $t$ large enough, provided that
\begin{equation}\label{nlc_plus}
\lambda (c,\varepsilon) (c_\varepsilon - c)  > r.
\end{equation}
Indeed, this guarantees that $\underline{v_2}$ converges to 0 as $t \to +\infty$ and in the moving frame with speed $c_\varepsilon$, faster than $\underline{v_1}$.
Thus they intersect twice and a subsolution can be constructed.

Now notice that $$c  \mapsto \lambda (c,\varepsilon) ( c_\varepsilon  -c)$$ is a decreasing function in the interval $ [2 \sqrt{d_2 r_2 (a \beta - 1 - h(a-1 ) - \varepsilon)} , c_\varepsilon]$.
Therefore there exists a critical $c$ below which inequality \eqref{nlc_plus} holds, and above which it does not.
It is straightforward to check that $c_v^{**}$ belongs to this interval, and the question is now whether $c_v^{**}$ is below or above this critical $c$. If it is below, in other words if
$$\lambda (c_v^{**},\varepsilon) (c_\varepsilon - c_v^{**}) > r,$$
then nonlocal pulling occurs and $v$ spreads at a speed strictly faster than $c_v^{**}$. Since $\varepsilon$ is arbitrarily small, it is enough that
$$\lambda (c_v^{**},0) (c_u^* - c_v^{**}) > r .$$
Recalling~\eqref{nlp_plusplus} with $\varepsilon=0$, this rewrites as
\begin{equation}\label{nlp_last}
\left[c_v^{**} - \sqrt{\left(c_v^{**}\right)^2 - 4 d_2 r_2 (a\beta - 1 - h (a-1))}\right] (c_u^* - c_v^{**} ) >  {\frac{\left(c_u^*\right)^2 - \left(c_v^*\right)^2}{2} }.
\end{equation}
Together with \eqref{cond_nlp} this corresponds to the parameter conditions arising in Theorem~\ref{th-NP}.
Note that the subsolution constructed here is sufficient to conclude that
$$
\liminf_{t\to +\infty} v(t,ct)>0.
$$
Moreover the uniform positivity of $v$ for $|x|\leq ct$ as $t \to +\infty$ follows from Corollary~\ref{COR-conv} (for $v$), the fact that $u$ spreads with speed $c_u^*$,
and the same argument as the one developed to prove \eqref{u_inf}.

Now to conclude, let us just try to find one situation which is compatible with previous assumptions.
To do this, let us simply vary the parameter $d_1$ (so that properties of the ODE system and condition~\eqref{cond_nlp} remain unchanged) and consider the case when $d_1\to  \left(\frac{d_2 r_2}{r_1}\right)^+$
so that $0<c_u^*-c_v^*\to 0$.
In that case, the right hand term of \eqref{nlp_last} converges to $0$ while the left hand term converges to
\begin{equation*}
\left[c_v^{**} - \sqrt{\left(c_v^{**}\right)^2 - 4 d_2 r_2 (a\beta - 1 - h (a-1))}\right]( c_v^* -c_v^{**} ) >0 .
\end{equation*}
Therefore, under condition \eqref{cond_nlp} and if $d_1 > {d_2 r_2}/{r_1}$ but close enough, we find that $v$ spreads strictly faster than $c_v^{**}$.

\noindent{\bf Acknowledgements}

\noindent{Part of this work was carried out during the first two authors were visiting Tamkang University in 2019.
We would like to thank the support of the National Center for Theoretical Sciences (NCTS) for their visits.
This work was carried out in the framework of the International Research Network ``ReaDiNet'' jointly funded by CNRS and NCTS.
The third author is partially supported by the Ministry of Science and Technology of Taiwan under the grant 108-2115-M-032-006-MY3.
The fourth author is supported by JSPS KAKENHI Grant-in-Aid for Young Scientists (B) (No.16K17634) and JSPS KAKENHI Grant-in-Aid for Scientific Research (C) (No.20K03708).}



\end{document}